\newcommand{\vertiii}[1]{{\left\vert\kern-0.25ex\left\vert\kern-0.25ex\left\vert #1 
    \right\vert\kern-0.25ex\right\vert\kern-0.25ex\right\vert}}
\begin{document}

\title{A penalty free Nitsche method for the weak imposition of boundary conditions in compressible and incompressible elasticity}
% Short title for running heads:
\shorttitle{Penalty free Nitsche method, elasticity}

\author{%
{\sc
Thomas Boiveau\thanks{Corresponding author. Email: thomas.boiveau.12@ucl.ac.uk}
and
Erik Burman\thanks{Email: e.burman@ucl.ac.uk}
} \\[2pt]
Department of Mathematics, University College London,\\Gower Street, London, WC1E  6BT, UK
}
% Short list of authors for running heads:
\shortauthorlist{T. Boiveau and E. Burman}

\maketitle

\begin{abstract}
% Body of abstract:
{In this paper, we study the stability of the nonsymmetric version of Nitsche's method without penalty for compressible and incompressible elasticity. For the compressible case we prove the convergence of the error in the $H^1$- and $L^2$-norms. In the incompressible case we use a Galerkin least squares pressure stabilization and we prove the convergence in the $H^1$-norm for the velocity and convergence of the pressure in the $L^2$-norm.}
% Keywords:
{Nitsche's method; compressible elasticity; inompressible elasticity; stabilized finite element methods; Korn inequality.}
\end{abstract}

\section{Introduction}
In the seminal paper of \cite{Nitsche_1971_a}, a consistent penalty method for the weak imposition of boundary conditions was introduced.
The method relied on a penalty term, the parameter of which had to be sufficiently large in order for stability to be ensured.
\cite{Freund_1995_a} then suggested a nonsymmetric version of Nitsche's method. The advantage of the nonsymmetric version was that no lower bound had to be respected for the penalty parameter, it only needed to be strictly larger than zero. The symmetric and nonsymmetric versions of Nitsche's method were further discussed by \cite{Hughes_2000_a}, where the possibility of using the nonsymmetric version with zero penalty parameter was mentioned. Penalty free nonsymmetric methods have indeed been advocated for the discontinuous Galerkin method \citep[see,][]{Oden_1998_a, Larson_2004_a, Girault_2009_a, Burman_2010_c}. \cite{Burman_2012_b} proved that the nonsymmetric Nitsche method was stable without penalty for scalar elliptic problems. The main observation in that paper was that although coercivity fails for the bilinear form when the penalty parameter was set to zero, the formulation could be proven to be inf-sup stable. Using the discrete stability optimal error estimates were obtained in the energy norm.

The nonsymmetric version of Nitsche's method without penalty can be seen as a Lagrange multiplier method, where the Lagrange multiplier has been replaced by the boundary fluxes of the discrete elliptic operator. This leads to a method that is stable without any unknown parameter and without introducing additional degrees of freedom. Eliminating the penalty term appears to have some advantages in multi-physics coupling problems in elasticity, \citep[see for instance,][]{Burman_2014_c} and it is therefore interesting to understand the structure and stability mechanisms of the method in such a context.

In this paper we extend the results of \cite{Burman_2012_b} to the case of the equations of linear elasticity. Both the cases of compressible and incompressible elasticity are considered. The main difficulties compared to the scalar case are:
\begin{itemize}
\item the Nitsche boundary term is no longer based on the gradient but now contains the deformation tensor and the divergence;
\item it is no longer clear that Korn's inequality holds;
\item for incompressible elasticity the inf-sup condition must be shown to hold simultaneously for the boundary conditions and the pressure.
\end{itemize}
We end this section by introducing the models of compressible and incompressible elasticity.
Let $\Omega$ be a convex bounded domain in $\mathbb{R}^2$, with polygonal boundary $\partial\Omega$. This boundary is decomposable such that $\partial \Omega = \cup_i\Gamma_i$ with $\{\Gamma_i\}_i$ the sides of the polygonal. $\boldsymbol{f} \in \left[L^2(\Omega)\right]^2$ is a given body force and $\boldsymbol{g}\in \left[H^{1/2}\left(\Omega\right)\right]^2$ the value of $\boldsymbol{u}$ at the boundary.

Compressible elasticity: find the displacement $\boldsymbol{u} : \Omega \subset \mathbb{R}^2 \rightarrow \mathbb{R}^2$ such that
\begin{eqnarray}
\label{elasticity}
-\nabla \cdot \boldsymbol{\sigma}(\boldsymbol{u})&=& \boldsymbol{f}  ~~~~\mbox{ in } \Omega \nonumber, \\
\boldsymbol{u} &=& \boldsymbol{g}  ~~~~\mbox{ on } \partial\Omega, 
\end{eqnarray}

with
$$\boldsymbol{\sigma}(\boldsymbol{u}):=2\mu\boldsymbol{\varepsilon}(\boldsymbol{u})+\lambda\left( \nabla \cdot \boldsymbol{u}\right)\mathbb{I}_{2\times 2}.$$

Incompressible elasticity: find the velocity $\boldsymbol{u} : \Omega \subset \mathbb{R}^2 \rightarrow \mathbb{R}^2$ and the pressure $p : \Omega \rightarrow \mathbb{R}$ such that
\begin{eqnarray}
\label{stokes}
-\nabla \cdot \boldsymbol{\sigma}\left(\boldsymbol{u},p\right)&=& \boldsymbol{f}  ~~~~\mbox{ in } \Omega \nonumber, \\
\nabla \cdot \boldsymbol{u}&=& 0 ~~~~\mbox{ in } \Omega,\\
\boldsymbol{u} &=& \boldsymbol{g}  ~~~~\mbox{ on } \partial\Omega \nonumber,
\end{eqnarray}

with
$$\boldsymbol{\sigma}(\boldsymbol{u},p):=2\mu\boldsymbol{\varepsilon}(\boldsymbol{u})+p\mathbb{I}_{2\times 2}.$$

To ensure the divergence free property of the incompressible case we assume $\int_{\partial\Omega}\boldsymbol{g}\cdot\boldsymbol{n}~\text{d}x=0$ where $\boldsymbol{n}$ denotes the outward normal vector of the boundary. For future reference we introduce the function spaces $V:=\left[H^1(\Omega)\right]^2$, $V_0:=\left[H^1_0(\Omega)\right]^2$ and $Q:=\{p\in L^2\left(\Omega\right),~\int_\Omega p~\text{d}x=0\}$.

\section{Preliminaries}
\label{preliminaries}
The set $\{\mathcal{T}_h\}_h$ defines a family of quasi-uniform and shape regular triangulations fitted to $\Omega$. We define the shape regularity as the existence of a constant $c_\rho\in\mathbb{R}_+$ for the family of triangulations such that, with $\rho_K$ the radius of the largest circle inscribed in an element $K$, there holds
$$\frac{h_K}{\rho_K}\leq c_\rho~~~~\forall K \in \mathcal{T}_h.$$ 

In a generic sense we define $K$ as the triangles in a triangulation  $\mathcal{T}_h$ and $h_K:=\mbox{diam}(K)$ is the diameter of $K$. Then we define $h:=\mbox{max}_{K\in\mathcal{T}_h}h_K$ as the mesh parameter for a given triangulation $\mathcal{T}_h$. $\mathbb{P}_k(K)$ defines the space of polynomials of degree less than or equal to $k$ on the element $K$. We define $V_h^k$ and $Q_h^k$ the finite element spaces of continuous piecewise polynomial functions
\begin{eqnarray*}
V_h^k&:=&\left\{\boldsymbol{u}_h\in V:\boldsymbol{u}_h|_K\in\left[\mathbb{P}_k\left(K\right)\right]^2~~\forall K\in\mathcal{T}_h\right\},~~k\geq 1,\\
Q_h^k&:=&\left\{p_h\in Q:p_h|_K\in\mathbb{P}_k\left(K\right)~~\forall K\in\mathcal{T}_h\right\},~~k\geq 1.
\end{eqnarray*}
For simplicity we will write the $L^2$-norm on a domain $\Theta$, $\left\|\cdot\right\|_{L^2\left(\Theta\right)}$ as $\left\|\cdot\right\|_\Theta$. In this paper $C$ will be used as a generic positive constant that may change at each occurrence, we will use the notation $a\lesssim b$ for $a\leq C b$. We now recall several classical inequalities and various mathematical concepts.
\begin{lemma}
\label{trace}
There exists $C_T\in \mathbb{R}_+$ such that for all $u\in H^1\left(K\right)$ and for all $K\in\mathcal{T}_h$, the trace inequality holds
$$\left\|u\right\|_{\partial K}\leq C_T \left(h_K^{-\frac12}\left\|u\right\|_{K}+h_K^{\frac12}\left\|\nabla u\right\|_{K}\right).$$
\end{lemma}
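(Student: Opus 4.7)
\medskip

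\noindent\textbf{Proof plan.} The plan is to establish the inequality through the standard scaling argument to a reference element, so the whole statement reduces to a single trace inequality on a fixed domain together with careful tracking of how each norm transforms under an affine map. I would fix a reference triangle $\hat K$ once and for all, and introduce the affine bijection $F_K:\hat K\to K$ with $F_K(\hat x)=B_K\hat x+b_K$. Shape regularity together with quasi-uniformity gives the scalings $|\det B_K|\sim h_K^2$, $\|B_K\|\sim h_K$ and $\|B_K^{-1}\|\sim h_K^{-1}$, with constants depending only on $c_\rho$.

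Next, for $u\in H^1(K)$ I set $\hat u:=u\circ F_K\in H^1(\hat K)$. On the reference element, a classical trace inequality (which follows, for example, from the continuity of the trace operator $H^1(\hat K)\to L^2(\partial\hat K)$, proved via the divergence theorem applied to $\hat u^2\boldsymbol{\phi}$ for a suitably chosen smooth $\boldsymbol{\phi}$ with $\boldsymbol{\phi}\cdot\hat{\boldsymbol n}=1$ on $\partial\hat K$) yields
$$\|\hat u\|_{\partial\hat K}^2 \le \hat C\bigl(\|\hat u\|_{\hat K}^2+\|\hat\nabla\hat u\|_{\hat K}^2\bigr)$$
with $\hat C$ a universal constant depending only on $\hat K$.

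I would then push this inequality back to $K$ using the change of variables. The surface measure on $\partial K$ scales like $h_K$ times that on $\partial\hat K$, the volume measure like $h_K^2$, and $\hat\nabla\hat u=B_K^T\nabla u$, so $|\hat\nabla\hat u|\lesssim h_K|\nabla u|$. This produces
$$\|u\|_{\partial K}^2\lesssim h_K\|\hat u\|_{\partial\hat K}^2\lesssim h_K\bigl(h_K^{-2}\|u\|_K^2+h_K^{-2}\cdot h_K^2\|\nabla u\|_K^2\bigr)= h_K^{-1}\|u\|_K^2+h_K\|\nabla u\|_K^2,$$
and taking the square root and using $\sqrt{a+b}\le\sqrt a+\sqrt b$ delivers the claim with $C_T$ depending only on $c_\rho$.

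The only substantive obstacle is verifying that the implicit constants track only through the shape-regularity parameter $c_\rho$ and not through $h_K$ itself; this is a routine but attention-demanding bookkeeping exercise based on the norm equivalences $\|B_K\|\sim h_K$ and $\|B_K^{-1}\|\sim h_K^{-1}$ for shape-regular affine maps in two dimensions. Once this is in hand, the inequality is immediate.
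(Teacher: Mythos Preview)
Your argument is the standard scaling-to-the-reference-element proof and is correct; the constants indeed depend only on the shape-regularity parameter once the norm equivalences for $B_K$ are invoked. Note that the paper does not actually prove this lemma: it is stated under ``we now recall several classical inequalities'' and given without proof, so there is nothing to compare against beyond observing that your route is the canonical one found in standard finite element references.
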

\begin{lemma}
\label{inverse}
There exists $C_I\in \mathbb{R}_+$ such that for all $u_h\in\mathbb{P}_k(K)$ and for all $K\in\mathcal{T}_h$, the inverse inequality holds
$$\left\|\nabla u_h\right\|_{K}\leq C_I h_K^{-1}\left\|u_h\right\|_{K}.$$
\end{lemma}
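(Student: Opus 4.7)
The plan is to use the standard scaling argument to a reference element, on which the result is reduced to the equivalence of norms on the finite-dimensional space $\mathbb{P}_k(\hat{K})$. Fix a reference triangle $\hat{K}$ (for instance, the unit simplex) and, for each $K\in\mathcal{T}_h$, let $F_K:\hat{K}\to K$ be the affine bijection $F_K(\hat{x})=B_K\hat{x}+b_K$. Any $u_h\in\mathbb{P}_k(K)$ pulls back to $\hat{u}_h:=u_h\circ F_K\in\mathbb{P}_k(\hat{K})$, and the chain rule together with a change of variables relates the relevant norms on $K$ to those on $\hat{K}$ through powers of $|\det B_K|$ and $\|B_K\|$, $\|B_K^{-1}\|$.

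First I would carry out the norm transfer: one obtains
$\left\|u_h\right\|_K^2 = |\det B_K|\,\|\hat{u}_h\|_{\hat{K}}^2$
and
$\left\|\nabla u_h\right\|_K^2 \leq |\det B_K|\,\|B_K^{-1}\|^2\,\|\hat\nabla \hat{u}_h\|_{\hat{K}}^2$.
Next, on the reference element the space $\mathbb{P}_k(\hat{K})$ is finite-dimensional, so all norms are equivalent; in particular there exists $\hat{C}_k>0$, depending only on $k$ and $\hat{K}$, such that
$\|\hat\nabla \hat{u}_h\|_{\hat{K}} \leq \hat{C}_k \,\|\hat{u}_h\|_{\hat{K}}\quad\forall\, \hat{u}_h\in\mathbb{P}_k(\hat{K})$.
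Combining these two ingredients cancels $|\det B_K|$ and yields the bound $\left\|\nabla u_h\right\|_K \leq \hat{C}_k\,\|B_K^{-1}\|\,\|u_h\|_K$.

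Finally I would invoke the shape-regularity hypothesis to control $\|B_K^{-1}\|$ by $h_K^{-1}$. The standard estimates give $\|B_K\|\lesssim h_K$ and $\|B_K^{-1}\|\lesssim \rho_K^{-1}$, and shape regularity $h_K/\rho_K\leq c_\rho$ then yields $\|B_K^{-1}\|\leq c_\rho\,h_K^{-1}$ up to a constant depending only on $\hat{K}$. Setting $C_I:=\hat{C}_k\,c_\rho$ (times the fixed geometric constant) gives the desired inequality uniformly in $K\in\mathcal{T}_h$.

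The only delicate point is keeping track of how the geometric constants scale, which is where shape regularity is essential: without a uniform upper bound on $h_K/\rho_K$ the factor $\|B_K^{-1}\|$ could blow up faster than $h_K^{-1}$ on degenerate elements, and the inverse inequality would fail to be uniform in the mesh. All other steps are routine linear algebra and the elementary fact that norms are equivalent on finite-dimensional spaces.
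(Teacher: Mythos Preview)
Your argument is the standard scaling proof of the inverse inequality and is correct in every step: the pullback to the reference element, the norm equivalence on the finite-dimensional space $\mathbb{P}_k(\hat K)$, and the use of shape regularity to bound $\|B_K^{-1}\|$ by $c_\rho h_K^{-1}$ are exactly what is needed.

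The paper, however, does not prove this lemma at all. It is introduced with the sentence ``We now recall several classical inequalities'' and is simply stated as a known result, alongside the trace inequality. So there is nothing to compare: you have supplied the textbook proof of a result the authors take for granted. If anything, you could shorten your write-up to a one-line reference (e.g.\ to Ciarlet or Brenner--Scott), since in a paper of this kind the inverse inequality is treated as background material rather than something requiring justification.
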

%\begin{definition}
%Let $\hat{K}$ be the reference $d$-simplex and $K=G(\hat{K})$ under the affine mapping $G:\mathbb{R}^d\rightarrow\mathbb{R}^d$ considering $G(\hat{\boldsymbol{z}})=A\hat{\boldsymbol{z}}+b$ with $A\in\mathbb{R}^{d\times d}$ and $b\in\mathbb{R}^d$.
%Then we can define the \textbf{Piola transform} $\mathcal{P}$ such that
%\begin{eqnarray*}
%\mathcal{P}:L^2( \hat{K})^d & \longrightarrow & L^2(K)^d\\
%\hat{\boldsymbol{z}} &\longmapsto & \boldsymbol{z}=\mathcal{P}(\hat{\boldsymbol{z}}):=\frac{1}{det(A)}A\hat{\boldsymbol{z}}.
%\end{eqnarray*}
%\end{definition}

Anticipating the inf-sup analysis of the coming section we introduce patches of boundary elements for the construction of special functions in the finite element space $V_h^k$ that will serve for the proof of stability. We will first detail the geometric construction and then give a technical Lemma that is needed in the coming analysis. We regroup the boundary elements in closed, disjoint patches $P_j$ with boundary $\partial P_j$, $j=1,...,N_{P}$. $N_P$ defines the total number of patches. The boundary elements are the elements with either a face or a vertex on the boundary. Every boundary element is a member of exactly one patch $P_j$. The number of elements necessary in each patch
is always at least two and upper bounded by a constant depending only on the shape regularity parameter $c_\rho$. Let $F_j:=\partial P_j \cap \partial \Omega$, we assume that every $\Gamma_i$ is partitioned by at least one $F_j$. Define the boundary elements by $P:=\cup_j P_j$. For each $F_j$ there exists two positive constants $c_1$, $c_2$ such that for all $j$
$$c_1h\leq\mbox{meas}(F_j)\leq c_2 h.$$
Figure \ref{patch_draw} gives a representation of a patch as defined above with four inner nodes.
Let $\phi_j \in V_h^1$ be defined for each node $r_i\in\mathcal{T}_h$ such that for each patch $P_j$
\begin{displaymath}
\phi_j\left(r_i\right)=
\left\{
\begin{array}{rllr}
0&\text{for}& r_i \in \Omega \backslash \mathring{F}_j\\
0&\text{for}& x_i \in K \text{ such that } K \text{ has all its vertices on } \partial\Omega\\
1&\text{for}& r_i \in \mathring{F}_j,
\end{array}\right.
\end{displaymath}
with $i=1,\dots,N_n$. Here $N_n$ is the number of nodes in the triangulation $\mathcal{T}_h$ and $\mathring{F}_j$ defines the interior of the face $F_j$.
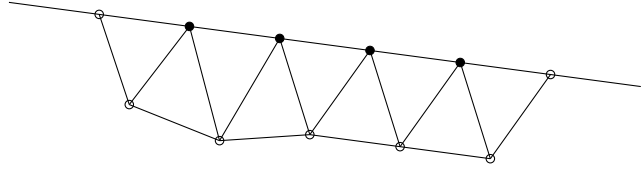
\begin{figure}[h!]
    \begin{center}
\begin{tikzpicture}[scale=0.8]

    \draw (-5,4.2)  -- (5.5,2.8);
    \draw (-3,2.5)  -- (-1.5,1.9);
    \draw (-1.5,1.9)  -- (0,2);
    \draw (0,2)  -- (3,1.6);

\draw (-3.5,4)  -- (-3,2.5);
\draw (3,1.6)  -- (4,3);
\draw (3,1.6)  -- (2.5,3.2);
\draw (1.5,1.8)  -- (2.5,3.2);
\draw (1.5,1.8)  -- (1,3.4);
\draw(0,2)  -- (1,3.4);
\draw(0,2)  -- (-0.5,3.6);
\draw(-1.5,1.9)  -- (-0.5,3.6);
\draw(-1.5,1.9)  -- (-2,3.8);
\draw(-3,2.5)  -- (-2,3.8);

%    \draw (-3.5,4)  -- (-5.5,3);
%    \draw (5.5,2)  -- (4,3);

\draw[] (-3.5,4) circle(2pt);
\draw[] (4,3) circle(2pt);
\draw[] (-3,2.5) circle(2pt);
\draw[fill=black] (-2,3.8) circle(2pt);
\draw[fill=black] (-0.5,3.6) circle(2pt);
\draw[fill=black] (1,3.4) circle(2pt);
\draw[fill=black] (2.5,3.2) circle(2pt);
\draw[] (-1.5,1.9) circle(2pt);
\draw[] (0,2) circle(2pt);
\draw[] (1.5,1.8) circle(2pt);
\draw[] (3,1.6) circle(2pt);

%	\draw [->] (0,3.7)--(0.9912,3.5677) node[above left]  {$\xi$};
%	\draw[->] (0,3.7)--(0.1323,4.6912) node[below left] {$\eta$};

%	\draw [->] (-4,0)--(-3,0) node[below left]  {$x$};
%	\draw[->] (-4,0)--(-4,1) node[below left] {$y$};

\end{tikzpicture}
\end{center}
\label{patch_draw}
\caption{Example of a patch $P_j$, the function $\boldsymbol{\phi}_j$ is equal to $0$ in the nonfilled nodes, $1$ in the filled nodes.}
\end{figure}

We define the function $\boldsymbol{v}_h\in V_h^k$ such that $\boldsymbol{v}_h:=\boldsymbol{u}_h+\boldsymbol{v}_\Gamma$, with $\boldsymbol{u}_h, \boldsymbol{v}_\Gamma\in V_h^k$. The function $\boldsymbol{v}_\Gamma$ is defined such that
\begin{equation}
\label{defvgamma}
\boldsymbol{v}_\Gamma=\sum_{j=1}^{N_P} \boldsymbol{v}_j=\sum_{j=1}^{N_P}\left(\alpha_1v_{j1},\alpha_2v_{j2}\right)^{\rm T},
\end{equation}
with
\begin{equation}
\label{defvj}
v_{j1}=\zeta_{j1}\phi_j~~~,~~~~~v_{j2}=\zeta_{j2}\phi_j~~~,~~~~~\zeta_{j1}, \zeta_{j2}\in \mathbb{R},
\end{equation}
for simplicity of notation we will use $v_1$, $v_2$ respectively instead of $v_{j1}$, $v_{j2}$. To define the properties of $v_1$ and $v_2$ we need to introduce the projection of $u$ on constant functions on the interval $I$
$$P_0 u\vert_{I}:=\text{meas}\left(I\right)^{-1}\int_{I} u~\text{d}s.$$
For simplicity of notation we will also use the notation $\overline{u}^j:=P_0 u\vert_{F_j}$. We introduce the following two dimensional rotation transformation.
\begin{definition}
The rotation transformation in two dimensions can be written as
\begin{eqnarray*}
\mathcal{R}:\left[L^2\left(\hat{\Omega}\right)\right]^2 & \longrightarrow & \left[L^2\left(\Omega\right)\right]^2\\
\hat{\boldsymbol{z}} &\longmapsto & \boldsymbol{z}=\mathcal{R}(\hat{\boldsymbol{z}}):=A\hat{\boldsymbol{z}},
\end{eqnarray*}
with $A$ a rotation matrix and $\hat{\boldsymbol{z}}$ the rotated quantity of $\boldsymbol{z}$.
\end{definition}
This two-dimensional rotation is used to transform the generic fixed frame $(x,y)$ into a rotated frame $(\xi,\eta)$ associated to each side $\Gamma_i$ of $\partial \Omega$. This rotated frame has its first component tangent to the side $\Gamma_i$ of the polygonal boundary and its second component normal to this same side $\Gamma_i$. Defining $\boldsymbol{\tau}$ as the unit tangent vector to the boundary, a function $\boldsymbol{z}=(z_1,z_2)$ expressed in the two-dimentional rotated frame has the following properties
$$\hat{z}_1=\boldsymbol{z}\cdot \boldsymbol{\tau}~~,~~~~~~~\hat{z}_2=\boldsymbol{z}\cdot \boldsymbol{n}.$$

The hat denotes a value expressed in the rotated frame $(\xi,\eta)$. Figure \ref{face_rotation_frame} represents schematically how is defined this frame for a side $\Gamma_i$.
\begin{figure}[h!]
    \begin{center}
\begin{tikzpicture}[scale=0.8]

    \draw (-3.5,4)  -- (4,3);

    \draw (-3.5,4)  -- (-5.5,3);
    \draw (5.5,2)  -- (4,3);

	\draw [->] (0,3.7)--(0.9912,3.5677) node[above left]  {$\xi$};
	\draw[->] (0,3.7)--(0.1323,4.6912) node[below left] {$\eta$};

	\draw [->] (-2,2)--(-1,2) node[below left]  {$x$};
	\draw[->] (-2,2)--(-2,3) node[below left] {$y$};
	
\draw (-1.5,3.8)node[above left]{$\Gamma_i$};
\draw (-4.3,3.5)node[above left]{$\Gamma_{i-1}$};
\draw (6,2.3)node[above left]{$\Gamma_{i+1}$};

\end{tikzpicture}
\end{center}
\label{face_rotation_frame}
\caption{Representation of the rotated frame $(\xi,\eta)$, the first component of the frame is tangent to the side $\Gamma_i$ and the second component is normal to the side $\Gamma_i$.}
\end{figure}
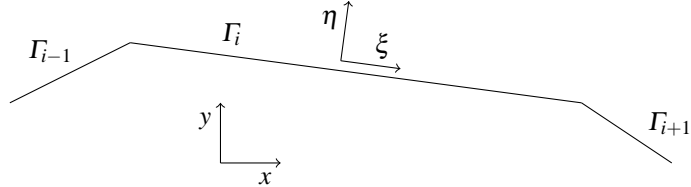

Using the rotation transformation $\hat{\boldsymbol{u}}_h=\left(\hat{u}_1,\hat{u}_2\right)^{\rm T}$, we may now define $v_1$ and $v_2$ by the relations
\begin{equation}
\label{prop_vj}
\text{meas}\left(\hat{F}_j\right)^{-1}\int_{\hat{F}_j}\frac{\partial \hat{v}_1}{\partial \eta}~\text{d}\hat{s}:=P_0\hat{u}_1\vert_{\hat{F}_j}~~,~~~~~~\text{meas}\left(\hat{F}_j\right)^{-1}\int_{\hat{F}_j}\frac{\partial \hat{v}_2}{\partial \eta}~\text{d}\hat{s}:=P_0\hat{u}_2\vert_{\hat{F}_j}.
\end{equation}
\begin{lemma}
\label{prop_patch}
Let $P_j$ be a patch and $\boldsymbol{v}_j$ a function as defined above, $\forall \boldsymbol{u}_h\in V_h^k$ the following inequalities are true
\begin{eqnarray}
\left\|\boldsymbol{u}_h-\overline{\boldsymbol{u}}_h^j \right\|_{F_j}&\lesssim& h\left\|\nabla \boldsymbol{u}_h\cdot \boldsymbol{\tau}\right\|_{F_j},\label{stdapprox}\\
\left\|h^{-\frac12}\boldsymbol{u}_h\right\|_{F_j}^2-C\left\|\nabla\boldsymbol{u}_h\right\|_{P_j}^2&\leq&\left\|h^{-\frac12}\overline{\boldsymbol{u}}_h^j\right\|_{F_j}^2,\label{inequality4}\\
\left\|\boldsymbol{v}_j\right\|_{P_j}&\lesssim& h\left\|\nabla \boldsymbol{v}_j\right\|_{P_j},\label{inequality7}\\
\left\|\nabla \hat{v}_1\right\|_{\hat{P}_j}&\leq& C\left\| h^{-\frac12}\overline{\boldsymbol{u}}_h^j \cdot \boldsymbol{\tau}\right\|_{F_j},\label{keyineq1}\\
\left\|\nabla \hat{v}_2\right\|_{\hat{P}_j}&\leq& C\left\| h^{-\frac12}\overline{\boldsymbol{u}}_h^j \cdot \boldsymbol{n}\right\|_{F_j}.\label{keyineq2}
\end{eqnarray}
The constant in (\ref{keyineq1}), (\ref{keyineq2}) is bounded uniformly provided each patch contains a sufficient number of elements
compared to $c_\rho$.
\end{lemma}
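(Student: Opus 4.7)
The plan is to treat the five inequalities in order, viewing (\ref{stdapprox})--(\ref{inequality7}) as classical finite element estimates and concentrating effort on (\ref{keyineq1})--(\ref{keyineq2}), which are the structural ingredient used later in the inf-sup argument.

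Inequality (\ref{stdapprox}) is a Poincar\'e inequality on the one-dimensional arc $F_j$: since $\overline{\boldsymbol{u}}_h^j$ is the $L^2$-mean of $\boldsymbol{u}_h$ on $F_j$, the difference $\boldsymbol{u}_h-\overline{\boldsymbol{u}}_h^j$ has vanishing integral there and the Poincar\'e constant is proportional to $\text{meas}(F_j)\simeq h$; the only derivative surviving on $F_j$ is the tangential one $\nabla\boldsymbol{u}_h\cdot\boldsymbol{\tau}$. Inequality (\ref{inequality4}) follows by splitting $\|\boldsymbol{u}_h\|_{F_j}^2=\|\boldsymbol{u}_h-\overline{\boldsymbol{u}}_h^j\|_{F_j}^2+\|\overline{\boldsymbol{u}}_h^j\|_{F_j}^2$ (the cross term vanishes because $\overline{\boldsymbol{u}}_h^j$ is constant on $F_j$) and then controlling the first term by (\ref{stdapprox}) combined with the trace inequality (Lemma~\ref{trace}) and the inverse inequality (Lemma~\ref{inverse}), which transfer the face derivative to the bulk norm $\|\nabla\boldsymbol{u}_h\|_{P_j}$. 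Inequality (\ref{inequality7}) is a scaled Poincar\'e--Friedrichs estimate: by the definition of $\phi_j$, $\boldsymbol{v}_j$ vanishes at every node of $P_j$ not lying in $\mathring{F}_j$ and therefore on a piece of $\partial P_j$ of positive measure; scaling to a reference patch of unit diameter and back supplies the factor $h$.

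The main work is in (\ref{keyineq1})--(\ref{keyineq2}). Because $\boldsymbol{v}_j=\phi_j(\alpha_1\zeta_{j1},\alpha_2\zeta_{j2})^{\mathrm T}$ is a scalar field times a constant vector, its image in the local frame still has the product form $\hat{\boldsymbol{v}}_j=\phi_j(\hat c_1,\hat c_2)^{\mathrm T}$ with $(\hat c_1,\hat c_2)^{\mathrm T}=A^{\mathrm T}(\alpha_1\zeta_{j1},\alpha_2\zeta_{j2})^{\mathrm T}$. Substituting $\hat v_i=\hat c_i\phi_j$ into the defining identity (\ref{prop_vj}) reduces matters to the scalar equation
\begin{equation*}
\hat c_i\cdot \text{meas}(\hat F_j)^{-1}\int_{\hat F_j}\frac{\partial\phi_j}{\partial\eta}\,\text{d}\hat s = P_0\hat u_i\vert_{\hat F_j},
\end{equation*}
which determines $\hat c_i$ provided the geometric factor on the left is bounded away from zero. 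Granting this, $|\hat c_i|\lesssim h\,\lvert P_0\hat u_i\vert_{\hat F_j}\rvert$, and since $\|\nabla\phi_j\|_{\hat P_j}=O(1)$ by scaling we obtain $\|\nabla\hat v_i\|_{\hat P_j}=|\hat c_i|\,\|\nabla\phi_j\|_{\hat P_j}\lesssim|\hat c_i|$; comparison with $\|h^{-1/2}\overline{\boldsymbol{u}}_h^j\cdot\boldsymbol{\tau}\|_{F_j}\simeq\lvert P_0\hat u_1\vert_{\hat F_j}\rvert$ (and the analogous relation in the normal direction) yields (\ref{keyineq1})--(\ref{keyineq2}).

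The main obstacle is precisely the uniform lower bound on the geometric quantity $\text{meas}(\hat F_j)^{-1}\int_{\hat F_j}\partial_\eta\phi_j\,\text{d}\hat s$: one has to verify by a scaling argument that it does not degenerate when the patch is reduced to reference size, which is where shape regularity and the hypothesis that each patch carries enough interior nodes enter---exactly the condition flagged in the concluding sentence of the lemma. The rotation itself contributes no difficulty, being an isometry and hence preserving both $L^2$-norms and boundary integrals.
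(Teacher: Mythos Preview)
Your argument is correct and follows the same route as the paper, which in its Appendix disposes of \eqref{stdapprox}--\eqref{inequality7} in exactly the way you describe (mean-value/Poincar\'e on $F_j$; the orthogonal splitting plus trace/inverse for \eqref{inequality4}; Poincar\'e on the patch for \eqref{inequality7}) and for \eqref{keyineq1}--\eqref{keyineq2} simply cites the scalar result in \cite{Burman_2012_b}. Your unfolding of that citation---writing $\hat v_i$ as a constant multiple of the bump $\phi_j$, determining the constant from \eqref{prop_vj}, and isolating the uniform lower bound on $\text{meas}(\hat F_j)^{-1}\int_{\hat F_j}\partial_\eta\phi_j\,\mathrm d\hat s$ as the place where shape regularity and the patch-size hypothesis enter---is precisely the content of that lemma.

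One small point of care: in the paper's conventions $\hat v_1,\hat v_2$ are the scalar fields $v_1=\zeta_{j1}\phi_j$, $v_2=\zeta_{j2}\phi_j$ written in the rotated coordinates, not the components of the rotated vector $A^{\mathrm T}\boldsymbol{v}_j$. Thus the constant determined by \eqref{prop_vj} is simply $\zeta_{ji}$, not the mixed combination $(\hat c_1,\hat c_2)^{\mathrm T}=A^{\mathrm T}(\alpha_1\zeta_{j1},\alpha_2\zeta_{j2})^{\mathrm T}$ you wrote. This does not affect the proof---both are constants times $\phi_j$ and the scaling argument is identical---but it is worth keeping straight, since the later Lemmas~\ref{inequality6} and~\ref{inequality5} use the particular decoupled form $\hat v_i=\zeta_{ji}\hat\phi_j$.
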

\begin{proof}
 See Appendix.
\end{proof}

In the analysis, we will need a particular form of Korn's inequality. To prove this alternative form of the Korn's inequality we need to define first the following seminorm
\begin{equation}
\label{seminorm}
\left|\boldsymbol{u}\right|_{\Gamma}^2:=\sum_{i=1}^{N_b}\int_{\Gamma_i}\left(P_{0}\boldsymbol{u}\right)^2~\textup{d}s~~~~~~~\forall\boldsymbol{u}\in V,
\end{equation}
with $\Gamma_i$ the $i^{th}$ side of the polygonal boundary $\partial \Omega$, $i=1,...,N_b$, $N_b$ is the number of sides on the boundary. $P_{0}\boldsymbol{u}\vert_{\Gamma_i}$ is the $P_0$-projection of $\boldsymbol{u}$ on the side $\Gamma_i$.
\begin{proposition}
 \label{norm_korn}
For all $\boldsymbol{u}\in V$ the seminorm (\ref{seminorm}) is a norm on $\textup{RM}$ with
$$\textup{RM}:=\left\{ \boldsymbol{u}:\boldsymbol{u}=\boldsymbol{c}+b\left( x_2,-x_1\right)^{\rm T},\boldsymbol{c}\in \mathbb{R}^2, b\in \mathbb{R}\right\}.$$
\end{proposition}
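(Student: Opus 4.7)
The plan is to isolate the one nontrivial property needed. Since $|\cdot|_\Gamma$ is manifestly a seminorm on $V$ (it is the square root of a sum of squared $L^2$-norms of linear functionals of $\boldsymbol{u}$), the only thing to check is definiteness on RM: if $\boldsymbol{u}\in\text{RM}$ and $|\boldsymbol{u}|_\Gamma=0$, then $\boldsymbol{u}\equiv 0$. Because each $P_0\boldsymbol{u}|_{\Gamma_i}$ is constant on $\Gamma_i$, the vanishing of $|\boldsymbol{u}|_\Gamma$ is equivalent to the $N_b$ vector identities $P_0\boldsymbol{u}|_{\Gamma_i}=0$ for $i=1,\dots,N_b$.

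Next I would evaluate $P_0\boldsymbol{u}|_{\Gamma_i}$ explicitly for an arbitrary rigid motion $\boldsymbol{u}=\boldsymbol{c}+b(x_2,-x_1)^{\rm T}$. Linearity of the average and the fact that the mean of a linear function $x_k$ over a line segment equals its value at the segment's midpoint $m_i=(m_{i,1},m_{i,2})$ give
$$P_0\boldsymbol{u}|_{\Gamma_i}=\boldsymbol{c}+b\,(m_{i,2},-m_{i,1})^{\rm T}.$$
Assuming this vanishes for every $i$ thus furnishes a linear system in the three unknowns $(c_1,c_2,b)$.

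Finally I would pick any two distinct sides $\Gamma_1,\Gamma_2$, which exist since $\Omega$ is a convex polygon so $N_b\geq 3$, and subtract the corresponding identities. This eliminates $\boldsymbol{c}$ and leaves
$$b\,(m_{1,2}-m_{2,2},\ m_{2,1}-m_{1,1})^{\rm T}=0.$$
Two different sides of a convex polygon share only a vertex, whereas each midpoint lies in the interior of its side; hence $m_1\neq m_2$, the displayed vector is nonzero, and therefore $b=0$. Substituting back into either identity yields $\boldsymbol{c}=0$, proving $\boldsymbol{u}\equiv 0$ and concluding the argument.

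The only step carrying any genuine content is the geometric observation that distinct sides of a convex polygon have distinct midpoints; everything else is linear algebra. I would expect the main care, if any obstacle arises, to be in handling the case $N_b=2$ cleanly — but since a polygon has at least three sides this degenerate situation never occurs, and a single pair of sides suffices to kill both $b$ and $\boldsymbol{c}$.
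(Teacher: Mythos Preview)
Your argument is correct and is exactly the ``direct inspection of the linear system resulting from $P_0\boldsymbol{u}|_{\Gamma_i}=0$'' that the paper invokes in a single line; you have simply carried out that inspection explicitly by evaluating the averages at the midpoints and eliminating $\boldsymbol{c}$ by subtraction. There is no substantive difference in approach.
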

\begin{proof}
The claim follows from direct inspection of the linear system resulting from $P_0 \boldsymbol{u}\vert_{\Gamma_i} =0$.
\end{proof}
%Since $\left|\boldsymbol{u}\right|_\Gamma$ is a seminorm, we only need to show that $\left|\boldsymbol{u}\right|_{\Gamma}= 0\Rightarrow\boldsymbol{u}=0$ $\forall \boldsymbol{u}\in \text{RM}$. First note that
%$$\left|\boldsymbol{u}\right|_{\Gamma}= 0\Rightarrow\sum_{i=1}^{N_b}\int_{\Gamma_i}\left(P_{0}\boldsymbol{u}\right)^2~\text{d}s=0 \Rightarrow P_{0}\boldsymbol{u}\vert_{\Gamma_i}=0.$$
%We also know that $\boldsymbol{u}\in\text{RM}$ then
%$$\boldsymbol{u}=\begin{pmatrix}c_1+bx_2\\ c_2-bx_1\end{pmatrix},~~~~~~~~~~~P_{0}\boldsymbol{u}\vert_{\Gamma_i}=\begin{pmatrix}c_1+bP_{0}\left(x_2\right)\vert_{\Gamma_i}\\ c_2-bP_{0}\left(x_1\right)\vert_{\Gamma_i}\end{pmatrix}.$$
%Considering two sides $\Gamma_1$ and $\Gamma_2$ we obtain
%\begin{eqnarray*}
%c_1+bP_{0}\left(x_2\right)\vert_{\Gamma_1}&=&0,\\
%c_2-bP_{0}\left(x_1\right)\vert_{\Gamma_1}&=&0,\\
%c_1+bP_{0}\left(x_2\right)\vert_{\Gamma_2}&=&0,\\
%c_2-bP_{0}\left(x_1\right)\vert_{\Gamma_2}&=&0.
%\end{eqnarray*}
%Observe that we only need $P_{0}(x_1)\vert_{\Gamma_1}\ne P_{0}(x_1)\vert_{\Gamma_2}$ or $P_{0}(x_2)\vert_{\Gamma_1}\ne P_{0}(x_2)\vert_{\Gamma_2}$ to obtain $c_1=c_2=b=0$. One of these condition is always true since $\left(P_{0}\left(x_1\right)\vert_{\Gamma_1};P_{0}\left(x_2\right)\vert_{\Gamma_1}\right)$ and $\left(P_{0}\left(x_1\right)\vert_{\Gamma_2};P_{0}\left(x_2\right)\vert_{\Gamma_2}\right)$ are respectively the mid-points of the sides $\Gamma_1$ and $\Gamma_2$. This implies
%$$\left|\boldsymbol{u}\right|_{\Gamma}= 0
%\Rightarrow\boldsymbol{u}=0.$$

The alternative form of the Korn's inequality which will allow us to control the deformation tensor is expressed in the following theorem.
\begin{theorem}
\label{korn}
There exists a positive constant $C_K$ such that $\forall \boldsymbol{u}\in V$
$$C_K\left\|\boldsymbol{u}\right\|_{H^1(\Omega)}\leq\left\|\boldsymbol{\varepsilon}\left( \boldsymbol{u}\right)\right\|_{\Omega}+\left|\boldsymbol{u}\right|_{\Gamma}.$$
\end{theorem}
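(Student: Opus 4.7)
The plan is to derive the inequality from the standard (second) Korn inequality combined with Proposition \ref{norm_korn}, using a Peetre--Tartar compactness argument. Standard Korn on a Lipschitz domain gives
$$\|\boldsymbol{u}\|_{H^1(\Omega)} \leq C\bigl(\|\boldsymbol{\varepsilon}(\boldsymbol{u})\|_\Omega + \|\boldsymbol{u}\|_\Omega\bigr),$$
so the only thing missing is the ability to control $\|\boldsymbol{u}\|_\Omega$ by $\|\boldsymbol{\varepsilon}(\boldsymbol{u})\|_\Omega + |\boldsymbol{u}|_\Gamma$; this is exactly the statement that the seminorm $|\cdot|_\Gamma$ is definite on the finite-dimensional kernel $\mathrm{RM}=\ker \boldsymbol{\varepsilon}$.

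First I would argue by contradiction. Assume there is a sequence $\{\boldsymbol{u}_n\}\subset V$ with $\|\boldsymbol{u}_n\|_{H^1(\Omega)}=1$ and $\|\boldsymbol{\varepsilon}(\boldsymbol{u}_n)\|_\Omega+|\boldsymbol{u}_n|_\Gamma\to 0$. By the Rellich--Kondrachov theorem, a subsequence (not relabelled) converges in $[L^2(\Omega)]^2$ to some $\boldsymbol{u}\in [L^2(\Omega)]^2$. Applying standard Korn to the differences,
$$\|\boldsymbol{u}_n-\boldsymbol{u}_m\|_{H^1(\Omega)} \leq C\bigl(\|\boldsymbol{\varepsilon}(\boldsymbol{u}_n-\boldsymbol{u}_m)\|_\Omega + \|\boldsymbol{u}_n-\boldsymbol{u}_m\|_\Omega\bigr),$$
shows the sequence is Cauchy in $V$ and therefore converges to $\boldsymbol{u}\in V$ strongly in $H^1$. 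Passing to the limit, $\|\boldsymbol{\varepsilon}(\boldsymbol{u})\|_\Omega=0$, so $\boldsymbol{u}\in \mathrm{RM}$.

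Next I would use continuity of the trace map $V\to [L^2(\partial\Omega)]^2$ together with the boundedness of each $P_0$-projection on $L^2(\Gamma_i)$ to conclude that $\boldsymbol{u}\mapsto |\boldsymbol{u}|_\Gamma$ is continuous on $V$. Consequently $|\boldsymbol{u}|_\Gamma=\lim_n |\boldsymbol{u}_n|_\Gamma=0$. By Proposition \ref{norm_korn}, since $\boldsymbol{u}\in \mathrm{RM}$ and $|\boldsymbol{u}|_\Gamma=0$, we must have $\boldsymbol{u}=\boldsymbol{0}$. This contradicts $\|\boldsymbol{u}\|_{H^1(\Omega)}=\lim_n\|\boldsymbol{u}_n\|_{H^1(\Omega)}=1$, and the claimed inequality follows.

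The only mildly delicate point is the Cauchy step: without an a priori lower bound from $|\cdot|_\Gamma$ on general $V$, one cannot immediately upgrade $L^2$ convergence to $H^1$ convergence, but the classical Korn inequality applied to the differences does exactly this, because $\boldsymbol{\varepsilon}(\boldsymbol{u}_n)\to 0$ in $L^2$ and $\boldsymbol{u}_n$ is $L^2$-Cauchy. Everything else is abstract and relies on the finite dimensionality of $\mathrm{RM}$ (so that Proposition \ref{norm_korn} suffices to close the argument).
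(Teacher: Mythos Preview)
Your proof is correct and follows the same overall scheme as the paper: argue by contradiction, extract a limit that lies in $\mathrm{RM}$, and invoke Proposition~\ref{norm_korn} to force the limit to vanish. The technical route differs slightly. The paper (following Brenner) uses the direct-sum decomposition $V=\tilde V\oplus\mathrm{RM}$ with $\tilde V=\{\boldsymbol{u}\in V:\int_\Omega\boldsymbol{u}=0,\ \int_\Omega\mathrm{rot}\,\boldsymbol{u}=0\}$ and the Open Mapping Theorem to split $\boldsymbol{u}_n=\boldsymbol{z}_n+\boldsymbol{w}_n$; the second Korn inequality on $\tilde V$ kills $\boldsymbol{z}_n$, and finite dimensionality of $\mathrm{RM}$ gives a convergent subsequence of $\boldsymbol{w}_n$. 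You instead run a Peetre--Tartar argument: Rellich--Kondrachov for $L^2$ compactness, then standard Korn applied to differences to upgrade to $H^1$ convergence. Your version is arguably more elementary in that it avoids the explicit complement $\tilde V$ and the Open Mapping Theorem, at the cost of invoking Rellich; the paper's version isolates the rigid-motion part more explicitly. Both close in exactly the same way via Proposition~\ref{norm_korn}, and neither gives a constructive constant.
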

\begin{proof}
This proof is inspired by the proof of the Korn's inequality in \cite{Brenner_2008_a}. First we define $\tilde{V}$
$$\tilde{V}:=\left\{ \boldsymbol{u}\in V:\int_\Omega \boldsymbol{u}~\text{d}x=0, \int_\Omega \text{rot}~\boldsymbol{u}~\text{d}x=0\right\}.$$
We know that, $V=\tilde{V} \oplus \text{RM}$. Therefore, given any $\boldsymbol{u}\in V$, there exists a unique pair $\left( \boldsymbol{z}, \boldsymbol{w}\right)\in \tilde{V}\times \text{RM}$ such that
$$\boldsymbol{u}=\boldsymbol{z}+\boldsymbol{w}.$$
By the Open Mapping Theorem (Theorem 15, chapter 15 of \cite{Lax_2002_a}) there exists a positive constant $C_1$ such that
\begin{equation}
C_1\left(\left\|\boldsymbol{z}\right\|_{H^1(\Omega)}+\left\|\boldsymbol{w}\right\|_{H^1(\Omega)}\right)\leq\left\|\boldsymbol{u}\right\|_{H^1(\Omega)}.
\label{triangle}
\end{equation}
We establish the theorem by contradiction. If the inequality that we want to show does not hold for any positive constant $C_K$, then there exists a sequence $\{\boldsymbol{u}_n\}\subseteq V$ such that
\begin{equation}
\left\|\boldsymbol{u}_n\right\|_{H^1(\Omega)}=1,
\label{hyp2}
\end{equation}
and
\begin{equation}
\left\|\boldsymbol{\varepsilon}\left( \boldsymbol{u}_n\right)\right\|_{\Omega}+\left|\boldsymbol{u}_n\right|_{\Gamma}<\frac{1}{n}.
\label{hyp3}
\end{equation}
For each $n$, let $\boldsymbol{u}_n=\boldsymbol{z}_n+\boldsymbol{w}_n$, where $\boldsymbol{z}_n\in\tilde{V}$ and $\boldsymbol{w}_n\in\text{RM}$, then
$$\left\|\boldsymbol{\varepsilon}\left( \boldsymbol{z}_n\right)\right\|_{\Omega}=\left\|\boldsymbol{\varepsilon}\left( \boldsymbol{u}_n\right)\right\|_{\Omega}<\frac{1}{n}.$$
The second Korn's inequality then implies that $\boldsymbol{z}_n \longrightarrow 0$ in $V$. It follows from (\ref{triangle}) and (\ref{hyp2}) that $\{\boldsymbol{w}_n\}$ is a bounded sequence in $V$. But since $\text{RM}$ is finite dimensional, $\{\boldsymbol{w}_n\}$ has a convergent subsequence $\{\boldsymbol{w}_{n_j}\}$ in $V$. Then the subsequence $\{\boldsymbol{u}_{n_j}=\boldsymbol{z}_{n_j}+\boldsymbol{w}_{n_j}\}$ converges in 
$V$ to some $\boldsymbol{u}=\lim_{n_j \rightarrow \infty} \boldsymbol{w}_{n_j} \in \text{RM}$, we obtain
\begin{equation}
\left\|\boldsymbol{u}\right\|_{H^1(\Omega)}=1,
\label{contradiction3}
\end{equation}
and
\begin{equation*}
\left|\boldsymbol{u}\right|_{\Gamma}=0.
\end{equation*}
The Proposition \ref{norm_korn} tells us that $\left|\cdot\right|_\Gamma$ is a norm on $\text{RM}$ and therefore 
$$\left|\boldsymbol{u}\right|_{\Gamma}= 0
\Leftrightarrow\boldsymbol{u}=0,$$
which contradicts the equation (\ref{contradiction3}).
\end{proof}

\section{Compressible elasticity}
The first case that we consider is the compressible problem described by the system (\ref{elasticity}). We have the following weak formulation: find $\boldsymbol{u}\in V_g$ such that
$$a\left(\boldsymbol{u},\boldsymbol{v}\right)=\left(\boldsymbol{f},\boldsymbol{v}\right)_\Omega~~~~~~~~~\forall \boldsymbol{v}\in V_0,$$
where $\left(x,y\right)_\Omega$ is the $L^2$-scalar product over $\Omega$, $V_g:=\left\{\boldsymbol{v}\in \left[H^1\left(\Omega\right)\right]^2:\boldsymbol{v}|_{\partial \Omega}=\boldsymbol{g}\right\}$ and
$$a\left(\boldsymbol{u},\boldsymbol{v}\right)=\left(2\mu\boldsymbol{\varepsilon}(\boldsymbol{u}),\boldsymbol{\varepsilon}(\boldsymbol{v})\right)_\Omega+\left(\lambda \nabla \cdot \boldsymbol{u}, \nabla \cdot \boldsymbol{v} \right)_{\Omega}.$$
\subsection{Finite element formulation}
The nonsymmetric Nitsche's method applied to the compressible elasticity problem (\ref{elasticity}) leads to the following variational formulation, find $\boldsymbol{u}_h\in V_h^k$ such that
\begin{equation}
\label{formulationelasticity}
A_h\left(\boldsymbol{u}_h,\boldsymbol{v}_h\right)=L_h\left(\boldsymbol{v}_h\right)~~~~~~~~~\forall \boldsymbol{v}_h\in V_h^k,
\end{equation}
where the bilinear forms $A_h$ and $L_h$ are defined as
\begin{eqnarray*}
A_h(\boldsymbol{u}_h,\boldsymbol{v}_h)&=&a(\boldsymbol{u}_h,\boldsymbol{v}_h)-b(\boldsymbol{u}_h,\boldsymbol{v}_h)+b(\boldsymbol{v}_h,\boldsymbol{u}_h),\\
L_h(\boldsymbol{v}_h)&=&\left(\boldsymbol{f},\boldsymbol{v}_h\right)_\Omega+b(\boldsymbol{v}_h,\boldsymbol{g}).
\end{eqnarray*}
The bilinear form $b$ is defined as
\begin{eqnarray*}
b(\boldsymbol{u}_h,\boldsymbol{v}_h)&=&\left\langle 2\mu \boldsymbol{\varepsilon}\left(\boldsymbol{u}_h\right) \cdot \boldsymbol{n},\boldsymbol{v}_h \right\rangle_{\partial \Omega}+\left\langle\lambda \nabla \cdot \boldsymbol{u}_h,\boldsymbol{v}_h \cdot \boldsymbol{n} \right\rangle_{\partial \Omega}.
\end{eqnarray*}
% added review 1
In (\ref{formulationelasticity}), $a(\boldsymbol{u}_h,\boldsymbol{v}_h)$ represents the terms defined over the whole computational domain, $-b(\boldsymbol{u}_h,\boldsymbol{v}_h)$ is necessary for the consistency of the method, since $v_h \ne 0$, the antisymmetric contribution $b(\boldsymbol{v}_h,\boldsymbol{u}_h)$ and its corresponding term in $L_h$ together  impose the boundary condition.
\subsection{Stability}
The main goal of this section is to show the inf-sup condition. We first give two technical Lemmas, proofs are provided in Appendix.
\begin{lemma}
\label{inequality6}
There exists $C > 0$ independent of $h$, $\mu$ and $\lambda$, but not of the mesh geometry, $\forall\boldsymbol{u}_h\in V_h^k$, on each patch $P_j$ for $\boldsymbol{v}_j\in V_h^k$ as defined in equation (\ref{defvgamma}) and $\forall \epsilon,\alpha_1,\alpha_2\in\mathbb{R}_+^*$, such that
\begin{equation*}
\left\langle \lambda\nabla \cdot \boldsymbol{v}_j,\boldsymbol{u}_h\cdot \boldsymbol{n}\right\rangle_{F_j} \gtrsim \alpha_2 \left( 1-\frac{C\alpha_2}{4\epsilon}\right)\left\|\frac{\lambda^{\frac12}}{h^{\frac12}}\overline{\boldsymbol{u}}_h^j\cdot \boldsymbol{n}\right\|_{F_j}^2-\frac{C\alpha_1^2}{4\epsilon}\left\|\frac{\lambda^{\frac12}}{h^{\frac12}}\overline{\boldsymbol{u}}_h^j\cdot \boldsymbol{\tau}\right\|_{F_j}^2-2\epsilon\left\|\lambda^{\frac12}\nabla\boldsymbol{u}_h\right\|_{P_j}^2.
\end{equation*}
\end{lemma}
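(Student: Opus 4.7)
The plan is to pass to the local rotated frame $(\xi,\eta)$ associated with the side $\Gamma_i$ that contains $F_j$, so that $\boldsymbol{n}$ is aligned with $\eta$ and $\boldsymbol{\tau}$ with $\xi$. Since the divergence is rotation invariant, in that frame $\nabla\cdot\boldsymbol{v}_j = \alpha_1\partial_\xi\hat v_1+\alpha_2\partial_\eta\hat v_2$ and $\boldsymbol{u}_h\cdot\boldsymbol{n} = \hat u_2$ on $F_j$, so one may rewrite
$$
\langle\lambda\nabla\cdot\boldsymbol{v}_j,\boldsymbol{u}_h\cdot\boldsymbol{n}\rangle_{F_j}
=\lambda\int_{F_j}(\alpha_1\partial_\xi\hat v_1+\alpha_2\partial_\eta\hat v_2)\,\hat u_2\,\mathrm{d}s.
$$
The first move is to split $\hat u_2 = \overline{\hat u_2}^j + (\hat u_2-\overline{\hat u_2}^j)$ on $F_j$ into its $P_0$-projection and its fluctuation, which produces four contributions to estimate.

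For the mean part, the term involving $\partial_\xi\hat v_1$ vanishes: $\int_{F_j}\partial_\xi\hat v_1\,\mathrm{d}\xi$ is the difference of $\hat v_1=\zeta_{j1}\phi_j$ between the two endpoints of $F_j$, where $\phi_j=0$ by construction. The term involving $\partial_\eta\hat v_2$ is exactly evaluated by the defining relation (\ref{prop_vj}) of $\hat v_2$, giving the leading positive contribution $\alpha_2\|\lambda^{1/2}h^{-1/2}\overline{\boldsymbol u}_h^j\cdot\boldsymbol n\|_{F_j}^2$ in the statement.

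The two fluctuation pieces are handled in parallel. Cauchy--Schwarz on $F_j$ separates the derivative of $\hat v_k$ from $\hat u_2-\overline{\hat u_2}^j$; then Lemma~\ref{trace} combined with Lemma~\ref{inverse} lifts $\|\partial\hat v_k\|_{F_j}$ to $h^{-1/2}\|\nabla\hat v_k\|_{P_j}$, and (\ref{keyineq1})--(\ref{keyineq2}) bound these by $C\|h^{-1/2}\overline{\boldsymbol u}_h^j\cdot\boldsymbol\tau\|_{F_j}$ for $k=1$ and by $C\|h^{-1/2}\overline{\boldsymbol u}_h^j\cdot\boldsymbol n\|_{F_j}$ for $k=2$. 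The fluctuation factor $\|\hat u_2-\overline{\hat u_2}^j\|_{F_j}$ is first bounded by (\ref{stdapprox}) as $h\|\nabla\boldsymbol u_h\cdot\boldsymbol\tau\|_{F_j}$, and a further trace+inverse step converts this into $h^{1/2}\|\nabla\boldsymbol u_h\|_{P_j}$. Young's inequality with parameter $\epsilon$ then splits each of the resulting products and yields, for the $\alpha_1$-fluctuation piece, $\tfrac{C\alpha_1^2}{4\epsilon}\|\lambda^{1/2}h^{-1/2}\overline{\boldsymbol u}_h^j\cdot\boldsymbol\tau\|_{F_j}^2+\epsilon\|\lambda^{1/2}\nabla\boldsymbol u_h\|_{P_j}^2$, and for the $\alpha_2$-fluctuation piece, $\tfrac{C\alpha_2^2}{4\epsilon}\|\lambda^{1/2}h^{-1/2}\overline{\boldsymbol u}_h^j\cdot\boldsymbol n\|_{F_j}^2+\epsilon\|\lambda^{1/2}\nabla\boldsymbol u_h\|_{P_j}^2$.

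Assembling everything, the $\alpha_1^2$ contribution is the negative tangential term of the statement, the $\alpha_2^2$ contribution combines with the leading positive term to produce the factor $\alpha_2(1-C\alpha_2/(4\epsilon))$, and the two $\epsilon$-volume pieces add up to $2\epsilon\|\lambda^{1/2}\nabla\boldsymbol u_h\|_{P_j}^2$. The main obstacle is the rotated-frame bookkeeping: it is crucial that (\ref{keyineq1}) returns only the $\boldsymbol\tau$-component and (\ref{keyineq2}) only the $\boldsymbol n$-component of $\overline{\boldsymbol u}_h^j$, since only then does the absorbable $\alpha_2^2$ correction attach to the same $\boldsymbol n$-norm that the leading positive term produces, instead of contaminating the tangential side.
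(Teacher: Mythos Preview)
Your proof is correct and follows essentially the same route as the paper's: pass to the rotated frame so that $\nabla\cdot\boldsymbol v_j=\alpha_1\partial_\xi\hat v_1+\alpha_2\partial_\eta\hat v_2$ and $\boldsymbol u_h\cdot\boldsymbol n=\hat u_2$, use $\int_{F_j}\partial_\xi\hat v_1\,\mathrm d\xi=0$ together with the defining relation \eqref{prop_vj} to extract the leading positive term, and control the two fluctuation pieces by Cauchy--Schwarz, trace/inverse, \eqref{keyineq1}--\eqref{keyineq2}, \eqref{stdapprox}, and Young. The only cosmetic difference is that the paper inserts the mean only after noting $\int_{F_j}\partial_\xi\hat v_1=0$ for the $\alpha_1$ term, whereas you split $\hat u_2$ at the outset; the resulting estimates are identical.
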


\begin{lemma}
\label{inequality5}
There exists $C > 0$ independent of $h$, $\mu$ and $\lambda$, but not of the mesh geometry, $\forall\boldsymbol{u}_h\in V_h^k$, on each patch $P_j$ for $\boldsymbol{v}_j\in V_h^k$ as defined in equation (\ref{defvgamma}) and $\forall \epsilon,\alpha_1,\alpha_2\in\mathbb{R}_+^*$, such that
\begin{equation*}
\left\langle 2\mu\boldsymbol{\varepsilon}\left(\boldsymbol{v}_j\right) \cdot \boldsymbol{n},\boldsymbol{u}_h \right\rangle_{F_j}\geq\alpha_2\left( 2-\frac{5C\alpha_2}{4\epsilon}\right)\left\|\frac{\mu^{\frac12}}{h^{\frac12}}\overline{\boldsymbol{u}}_h^j \cdot \boldsymbol{n}\right\|_{F_j}^2+\alpha_1\left( 1-\frac{C\alpha_1}{4\epsilon}\right)\left\|\frac{\mu^{\frac12}}{h^{\frac12}}\overline{\boldsymbol{u}}_h^j \cdot \boldsymbol{\tau}\right\|_{F_j}^2-3\epsilon\left\|\mu^{1/2}\nabla \boldsymbol{u}_h\right\|_{P_j}^2.
\end{equation*}
\end{lemma}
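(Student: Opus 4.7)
The plan is to rotate into the local frame $(\xi,\eta)$ associated to the side $\Gamma_i\supset F_j$, where $\hat{\boldsymbol{n}}=(0,1)^{\rm T}$; in this frame $\boldsymbol{\varepsilon}(\boldsymbol{v}_j)\cdot\boldsymbol{n}$ has rotated components $\frac{1}{2}(\partial_\eta\hat{v}_1+\partial_\xi\hat{v}_2)$ and $\partial_\eta\hat{v}_2$. Since the rotation preserves the $L^2$ pairing on $F_j$ and the scalars $\alpha_1,\alpha_2$ are carried through by (\ref{defvgamma}), the left-hand side splits into three pieces,
\[
\int_{F_j}\mu\alpha_1(\partial_\eta\hat{v}_1)\hat{u}_1\,{\rm d}s+\int_{F_j}\mu\alpha_2(\partial_\xi\hat{v}_2)\hat{u}_1\,{\rm d}s+\int_{F_j}2\mu\alpha_2(\partial_\eta\hat{v}_2)\hat{u}_2\,{\rm d}s,
\]
which I would treat separately.

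For each $\hat{u}_k$ factor I would insert the decomposition $\hat{u}_k=\overline{\hat{u}_k}^j+(\hat{u}_k-\overline{\hat{u}_k}^j)$. The two integrals in which $\partial_\eta\hat{v}_k$ is paired with the mean part $\overline{\hat{u}_k}^j$ collapse, through the defining relation (\ref{prop_vj}), into the two positive main contributions proportional to $\alpha_1\|\mu^{1/2}h^{-1/2}\overline{\boldsymbol{u}}_h^j\cdot\boldsymbol{\tau}\|_{F_j}^2$ and $2\alpha_2\|\mu^{1/2}h^{-1/2}\overline{\boldsymbol{u}}_h^j\cdot\boldsymbol{n}\|_{F_j}^2$, exactly the positive terms appearing on the right-hand side.

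The residual pieces are the two oscillation terms $\int_{F_j}\mu\alpha_k(\partial_\eta\hat{v}_k)(\hat{u}_k-\overline{\hat{u}_k}^j)\,{\rm d}s$, for $k=1,2$, and the shear contribution $\int_{F_j}\mu\alpha_2(\partial_\xi\hat{v}_2)\hat{u}_1\,{\rm d}s$. Each would be bounded by Cauchy--Schwarz. The oscillation factor $\|\hat{u}_k-\overline{\hat{u}_k}^j\|_{F_j}$ is controlled by (\ref{stdapprox}) together with a discrete trace step, producing a factor of $h^{1/2}\|\nabla\boldsymbol{u}_h\|_{P_j}$. The derivative factors $\|\partial_\eta\hat{v}_k\|_{F_j}$ and $\|\partial_\xi\hat{v}_2\|_{F_j}$ are lifted from $F_j$ into the patch using Lemma~\ref{trace} and Lemma~\ref{inverse} (producing $h^{-1/2}\|\nabla\hat{v}_k\|_{P_j}$), then controlled by (\ref{keyineq1}) or (\ref{keyineq2}) in terms of $\|h^{-1/2}\overline{\boldsymbol{u}}_h^j\cdot\boldsymbol{\tau}\|_{F_j}$ or $\|h^{-1/2}\overline{\boldsymbol{u}}_h^j\cdot\boldsymbol{n}\|_{F_j}$. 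For the shear term I would further split $\hat{u}_1=\overline{\hat{u}_1}^j+(\hat{u}_1-\overline{\hat{u}_1}^j)$, which distributes its contribution between the $\alpha_1$ main term and the $\nabla\boldsymbol{u}_h$ remainder.

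Finally, Young's inequality with parameter $\epsilon$ is applied to each of these Cauchy--Schwarz estimates, sending one half of each product into one of the three target quantities on the right-hand side. Careful bookkeeping shows that the normal direction accumulates three separate Young contributions (one from the $\partial_\eta\hat{v}_2$ oscillation and two from the shear term via each piece of $\hat{u}_1$) while the tangential direction accumulates only one, which is the origin of the asymmetric factors $\alpha_2(2-5C\alpha_2/(4\epsilon))$ versus $\alpha_1(1-C\alpha_1/(4\epsilon))$, and of the $-3\epsilon\|\mu^{1/2}\nabla\boldsymbol{u}_h\|_{P_j}^2$ remainder that collects the three oscillation pieces. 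The main technical obstacle will be precisely this three-way accounting of the Young absorptions, and in particular making sure that the tangential derivative of $\hat{v}_2$ produced by the shear term is genuinely controlled by $\overline{\boldsymbol{u}}_h^j\cdot\boldsymbol{n}$ rather than by $\overline{\boldsymbol{u}}_h^j\cdot\boldsymbol{\tau}$; this relies on the full force of (\ref{keyineq2}), which bounds every component of $\nabla\hat{v}_2$, not just $\partial_\eta\hat{v}_2$.
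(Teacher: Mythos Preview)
Your overall strategy---rotate to the $(\xi,\eta)$ frame, split off the three scalar integrals, insert $\hat u_k=\overline{\hat u_k}^j+(\hat u_k-\overline{\hat u_k}^j)$, extract the two positive main terms via \eqref{prop_vj}, and absorb the residuals by Cauchy--Schwarz plus \eqref{stdapprox}, Lemmas~\ref{trace}--\ref{inverse} and \eqref{keyineq1}--\eqref{keyineq2}---is exactly what the paper does. The gap is in your handling of the shear integral $\int_{F_j}\mu\alpha_2(\partial_\xi\hat v_2)\hat u_1\,{\rm d}s$.

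You split $\hat u_1$ into mean and oscillation and propose to bound \emph{both} pieces via Young. But the mean piece
\[
\mu\alpha_2\,\overline{\hat u_1}^j\int_{F_j}\partial_\xi\hat v_2\,{\rm d}s
\]
is in fact \emph{identically zero}: $\hat v_2=\zeta_{j2}\phi_j$ and $\phi_j$ vanishes at the two endpoints of $F_j$ by construction, so the tangential integral of $\partial_\xi\hat v_2$ over $F_j$ is $[\hat v_2]_{\partial F_j}=0$. The paper uses exactly this observation (``$\int_{\hat F_j}\partial_\xi\hat v_2\,{\rm d}\hat s=0$'') to reduce the shear term to its oscillation part alone. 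If instead you estimate the mean piece by Cauchy--Schwarz and Young, the $\partial_\xi\hat v_2$ factor is controlled by $\|h^{-1/2}\overline{\boldsymbol u}_h^j\cdot\boldsymbol n\|_{F_j}$ via \eqref{keyineq2}, but the $\overline{\hat u_1}^j$ factor produces a stray term $-\epsilon\|\mu^{1/2}h^{-1/2}\overline{\boldsymbol u}_h^j\cdot\boldsymbol\tau\|_{F_j}^2$. This cannot be absorbed into the stated tangential coefficient $\alpha_1(1-C\alpha_1/(4\epsilon))$ without making $C$ depend on $\epsilon$ and $\alpha_1$, contrary to the lemma. Your own bookkeeping reflects the inconsistency: you claim the tangential direction receives ``only one'' Young contribution while simultaneously saying the shear mean part is distributed to ``the $\alpha_1$ main term''; and your count of three normal contributions would yield $6C\alpha_2/(4\epsilon)$, not the $5C\alpha_2/(4\epsilon)$ in the statement.

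Once you insert the vanishing of $\int_{F_j}\partial_\xi\hat v_2\,{\rm d}s$, the shear term reduces to a single oscillation integral, the normal direction receives exactly two Young contributions ($4C/(4\epsilon)$ from the $2\mu\,\partial_\eta\hat v_2$ oscillation and $C/(4\epsilon)$ from the shear oscillation), the tangential direction receives one, and the $-3\epsilon\|\mu^{1/2}\nabla\boldsymbol u_h\|_{P_j}^2$ remainder collects the three oscillation halves, matching the lemma verbatim.
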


\begin{definition}
\label{triplenorm}
We define the triple norm of a function $\boldsymbol{w}\in V$ as
\begin{eqnarray*}
\vertiii{ \boldsymbol{w}}^2
&=&\mu\left(\left\|\nabla\boldsymbol{w}\right\|_{\Omega}^2+\left\|h^{-\frac12}\boldsymbol{w}\right\|_{\partial\Omega}^2\right)+\lambda\left(\left\|\nabla \cdot \boldsymbol{w}\right\|_{\Omega}^2+\left\|h^{-\frac12}\boldsymbol{w}\cdot\boldsymbol{n}\right\|_{\partial\Omega}^2\right).
\end{eqnarray*}
Observe that this is a norm on $V$ by the Poincar\'e inequality.
\end{definition}
\begin{lemma}
\label{lowerbound}
For $\boldsymbol{u}_h, \boldsymbol{v}_h\in V_h^k$ with $\boldsymbol{v}_h= \boldsymbol{u}_h+ \boldsymbol{v}_\Gamma$, $\boldsymbol{v}_\Gamma$ defined by equations \eqref{defvgamma} and \eqref{defvj},  there exists positive constants $\beta_0$ and $h_0$ such that the following inequality holds for $h<h_0$
$$\beta_0\vertiii{ \boldsymbol{u}_h}^2 \leq A_h(\boldsymbol{u}_h,\boldsymbol{v}_h).$$
\end{lemma}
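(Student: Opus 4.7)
The plan is to start by expanding, with $\boldsymbol{v}_h=\boldsymbol{u}_h+\boldsymbol{v}_\Gamma$, and observing that the $b(\boldsymbol{u}_h,\boldsymbol{u}_h)$ contributions in $-b(\boldsymbol{u}_h,\boldsymbol{v}_h)$ and $b(\boldsymbol{v}_h,\boldsymbol{u}_h)$ cancel, leaving
$$A_h(\boldsymbol{u}_h,\boldsymbol{v}_h) = a(\boldsymbol{u}_h,\boldsymbol{u}_h) + a(\boldsymbol{u}_h,\boldsymbol{v}_\Gamma) + b(\boldsymbol{v}_\Gamma,\boldsymbol{u}_h) - b(\boldsymbol{u}_h,\boldsymbol{v}_\Gamma).$$
The first term $a(\boldsymbol{u}_h,\boldsymbol{u}_h)=2\mu\|\boldsymbol{\varepsilon}(\boldsymbol{u}_h)\|_\Omega^2+\lambda\|\nabla\cdot\boldsymbol{u}_h\|_\Omega^2$, combined with the modified Korn inequality of Theorem~\ref{korn}, will yield the full bulk part $\mu\|\nabla\boldsymbol{u}_h\|_\Omega^2+\lambda\|\nabla\cdot\boldsymbol{u}_h\|_\Omega^2$ of $\vertiii{\boldsymbol{u}_h}^2$. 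The residual Korn penalty obeys $\mu|\boldsymbol{u}_h|_\Gamma^2 \le \mu\|\boldsymbol{u}_h\|_{\partial\Omega}^2 = \mu h\,\|h^{-1/2}\boldsymbol{u}_h\|_{\partial\Omega}^2$, so it is absorbable into the boundary part of the triple norm once $h$ is small.

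The boundary part of $\vertiii{\boldsymbol{u}_h}^2$ is obtained from $b(\boldsymbol{v}_\Gamma,\boldsymbol{u}_h)$, which I would split into a sum over patches $F_j$ and estimate using Lemmas~\ref{inequality5} and \ref{inequality6}. After summation, choosing $\alpha_1,\alpha_2,\epsilon$ small yields positive multiples of $\|\mu^{1/2}h^{-1/2}\overline{\boldsymbol{u}}_h^j\cdot\boldsymbol{\tau}\|_{F_j}^2$ and $\|(\mu+\lambda)^{1/2}h^{-1/2}\overline{\boldsymbol{u}}_h^j\cdot\boldsymbol{n}\|_{F_j}^2$, minus bulk remainders of the form $\epsilon\|\mu^{1/2}\nabla\boldsymbol{u}_h\|_P^2$ and $\epsilon\|\lambda^{1/2}\nabla\boldsymbol{u}_h\|_P^2$. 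Inequality~\eqref{inequality4} then converts the patch means $\overline{\boldsymbol{u}}_h^j$ into the genuine boundary norms $\|h^{-1/2}\boldsymbol{u}_h\|^2_{\partial\Omega}$ and $\|h^{-1/2}\boldsymbol{u}_h\cdot\boldsymbol{n}\|^2_{\partial\Omega}$, once again at the cost of a bulk $\|\nabla\boldsymbol{u}_h\|_P^2$ remainder. The two remaining terms $a(\boldsymbol{u}_h,\boldsymbol{v}_\Gamma)$ and $-b(\boldsymbol{u}_h,\boldsymbol{v}_\Gamma)$ are treated as perturbations: Cauchy--Schwarz and Young's inequality, together with the trace and inverse estimates of Lemmas~\ref{trace} and~\ref{inverse} to move boundary values of $\nabla\boldsymbol{u}_h$ into the bulk, and the patch bounds \eqref{inequality7}, \eqref{keyineq1}, \eqref{keyineq2} to control $\|\boldsymbol{v}_\Gamma\|_{P_j}$ and $\|\nabla\boldsymbol{v}_\Gamma\|_{P_j}$ in terms of $\overline{\boldsymbol{u}}_h^j$, reduce them to further contributions of the schematic form $\eta\|h^{-1/2}\overline{\boldsymbol{u}}_h^j\|_{F_j}^2+\eta^{-1}\|\nabla\boldsymbol{u}_h\|_{P_j}^2$ with $\eta$ a free Young parameter.

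The last stage is purely bookkeeping. I would fix the free parameters in order: first $\alpha_1,\alpha_2$ small enough that the coefficients emerging from Lemmas~\ref{inequality5}--\ref{inequality6} remain strictly positive; then $\epsilon$ and the various Young parameters small enough that every bulk remainder of the type $C\|\nabla\boldsymbol{u}_h\|^2$ can be absorbed into $2\mu\|\boldsymbol{\varepsilon}(\boldsymbol{u}_h)\|_\Omega^2+\lambda\|\nabla\cdot\boldsymbol{u}_h\|_\Omega^2$, invoking Korn a second time to pass from $\|\boldsymbol{\varepsilon}(\boldsymbol{u}_h)\|^2$ back to $\|\nabla\boldsymbol{u}_h\|^2$; and finally $h_0$ small enough to eliminate the Korn boundary penalty. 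The main obstacle will be performing all these absorptions simultaneously: the coefficients are interlocked, and in particular guaranteeing that the $\lambda$-weighted bulk remainders arising from Lemma~\ref{inequality6} fit within the available $\lambda\|\nabla\cdot\boldsymbol{u}_h\|_\Omega^2$ budget constrains $\alpha_2$ and $\epsilon$ quantitatively.
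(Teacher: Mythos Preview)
Your overall strategy matches the paper's: decompose $A_h(\boldsymbol{u}_h,\boldsymbol{v}_h)=A_h(\boldsymbol{u}_h,\boldsymbol{u}_h)+\sum_j A_h(\boldsymbol{u}_h,\boldsymbol{v}_j)$, use Lemmas~\ref{inequality5}--\ref{inequality6} for the key boundary terms, bound $a(\boldsymbol{u}_h,\boldsymbol{v}_j)$ and $b(\boldsymbol{u}_h,\boldsymbol{v}_j)$ as perturbations via \eqref{inequality7}, \eqref{keyineq1}, \eqref{keyineq2}, apply Korn (Theorem~\ref{korn}) to convert $\|\boldsymbol{\varepsilon}(\boldsymbol{u}_h)\|_\Omega$ into $\|\nabla\boldsymbol{u}_h\|_\Omega$, and upgrade patch means to true boundary norms via \eqref{inequality4}. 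Two points, however, need correcting.

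First, and most importantly, the $\lambda$-weighted bulk remainders cannot be absorbed into $\lambda\|\nabla\cdot\boldsymbol{u}_h\|_\Omega^2$ as you suggest. Lemma~\ref{inequality6} (and the analogous perturbation bounds) produces terms of the form $\epsilon\lambda\|\nabla\boldsymbol{u}_h\|_{P_j}^2$, with the \emph{full} gradient, and there is no inequality allowing $\|\nabla\boldsymbol{u}_h\|$ to be controlled by $\|\nabla\cdot\boldsymbol{u}_h\|$. The only available sink for these terms is the $\mu C_K\|\nabla\boldsymbol{u}_h\|_\Omega^2$ produced by Korn. This forces $\epsilon(\mu+\lambda)\lesssim \mu C_K$, i.e.\ $\epsilon\sim\mu/(\mu+\lambda)$; the paper takes $\epsilon=\mu C_K/(5\mu+4\lambda)$ precisely for this reason. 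This is not cosmetic: it is the mechanism that makes $\beta_0=O(\mu/(\mu+\lambda))$ and $h_0=O\bigl(\mu^2/(\mu+\lambda)^2\bigr)$, i.e.\ the locking behaviour of the estimate.

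Second, your parameter ordering is inverted. The coefficients from Lemmas~\ref{inequality5}--\ref{inequality6} have the form $\alpha_i\bigl(c-\tfrac{C\alpha_i}{\epsilon}\bigr)$, so their positivity requires $\alpha_i\lesssim\epsilon$; you cannot fix $\alpha_i$ first and then shrink $\epsilon$. The correct order is: fix $\epsilon$ from the bulk-absorption constraint above, then choose $\alpha_1,\alpha_2$ small relative to $\epsilon$ so the boundary coefficients stay positive, and finally choose $h_0$ to absorb the $O(h)$ Korn boundary penalty. With these two fixes your argument goes through and coincides with the paper's.
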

\begin{proof}
Decomposing the bilinear form,  we can write the following
$$A_h(\boldsymbol{u}_h,\boldsymbol{v}_h)=A_h(\boldsymbol{u}_h,\boldsymbol{u}_h)+\sum_{j=1}^{N_p}A_h(\boldsymbol{u}_h,\boldsymbol{v}_j).$$
Clearly we have
\begin{equation*}
A_h(\boldsymbol{u}_h,\boldsymbol{u}_h)=2\left\|\mu^{\frac12}\boldsymbol{\varepsilon}(\boldsymbol{u}_h)\right\|_{\Omega}^2+\left\| \lambda^{\frac12}\nabla \cdot \boldsymbol{u}_h\right\|_{\Omega}^2,
\end{equation*}
and
\begin{equation*}
\begin{split}
A_h(\boldsymbol{u}_h,\boldsymbol{v}_j)=&\left(2\mu\boldsymbol{\varepsilon}(\boldsymbol{u}_h),\boldsymbol{\varepsilon}(\boldsymbol{v}_j)\right)_{P_j} -\left\langle 2\mu \boldsymbol{\varepsilon}\left(\boldsymbol{u}_h\right) \cdot \boldsymbol{n},\boldsymbol{v}_j \right\rangle_{F_j}+\left\langle 2\mu\boldsymbol{\varepsilon}\left(\boldsymbol{v}_j\right) \cdot \boldsymbol{n},\boldsymbol{u}_h \right\rangle_{F_j}\\&+\left(\lambda \nabla \cdot \boldsymbol{u}_h, \nabla \cdot \boldsymbol{v}_j \right)_{P_j}-\left\langle\lambda \nabla \cdot \boldsymbol{u}_h,\boldsymbol{v}_j \cdot \boldsymbol{n} \right\rangle_{F_j}+\left\langle\lambda \nabla \cdot \boldsymbol{v}_j,\boldsymbol{u}_h \cdot \boldsymbol{n} \right\rangle_{F_j}.
\end{split}
\end{equation*}
Using the Cauchy-Schwarz inequality and the inequalities \eqref{keyineq1} \eqref{keyineq2}, we can write the two terms defined over $P_j$ as
\begin{eqnarray*}
\left(2\mu\boldsymbol{\varepsilon}(\boldsymbol{u}_h),\boldsymbol{\varepsilon}(\boldsymbol{v}_j)\right)_{P_j}
&\geq&-\epsilon \left\|\mu^{\frac12}\boldsymbol{\varepsilon}(\boldsymbol{u}_h) \right\|_{P_j}^2 -\frac{ C\alpha_1^2}{\epsilon} \left\|\frac{\mu^{\frac12}}{h^{\frac12}}\overline{\boldsymbol{u}}_h^j \cdot \boldsymbol{\tau}\right\|_{F_j}^2-\frac{C\alpha_2^2}{\epsilon} \left\|\frac{\mu^{\frac12}}{h^{\frac12}}\overline{\boldsymbol{u}}_h^j \cdot \boldsymbol{n}\right\|_{F_j}^2,\\
\left(\lambda\nabla \cdot \boldsymbol{u}_h,\nabla \cdot \boldsymbol{v}_j \right)_{P_j}
&\geq&-\epsilon \left\|\lambda^{\frac12}\nabla \boldsymbol{u}_h\right\|_{P_j}^2-\frac{C\alpha_1^2}{4\epsilon} \left\|\frac{\lambda^{\frac12}}{h^{\frac12}}\overline{\boldsymbol{u}}_h^j \cdot \boldsymbol{\tau}\right\|_{F_j}^2-\frac{C\alpha_2^2}{4\epsilon} \left\|\frac{\lambda^{\frac12}}{h^{\frac12}}\overline{\boldsymbol{u}}_h^j \cdot \boldsymbol{n}\right\|_{F_j}^2.
\end{eqnarray*}
Combining the inequality (\ref{inequality7}) with the trace and inverse inequalities of Lemmas \ref{trace} and \ref{inverse}, followed by \eqref{keyineq1} \eqref{keyineq2} we obtain
\begin{eqnarray*}
\left\langle 2\mu \boldsymbol{\varepsilon}\left(\boldsymbol{u}_h\right) \cdot \boldsymbol{n},\boldsymbol{v}_j \right\rangle_{F_j}
&\leq&\epsilon \left\|\mu^{\frac12}\boldsymbol{\varepsilon}(\boldsymbol{u}_h) \right\|_{P_j}^2 +\frac{C\alpha_1^2}{\epsilon} \left\|\frac{\mu^{\frac12}}{h^{\frac12}}\overline{\boldsymbol{u}}_h^j \cdot \boldsymbol{\tau}\right\|_{F_j}^2+\frac{C\alpha_2^2}{\epsilon} \left\|\frac{\mu^{\frac12}}{h^{\frac12}}\overline{\boldsymbol{u}}_h^j \cdot \boldsymbol{n}\right\|_{F_j}^2,\\
\left\langle\lambda\nabla \cdot  \boldsymbol{u}_h,  \boldsymbol{v}_j \cdot \boldsymbol{n}\right\rangle_{F_j}
&\leq&\epsilon \left\|\lambda^{\frac12}\nabla\boldsymbol{u}_h\right\|_{P_j}^2+\frac{C\alpha_1^2}{4\epsilon}\left\|\frac{\lambda^{\frac12}}{h^{\frac12}}\overline{\boldsymbol{u}}_h^j \cdot  \boldsymbol{\tau}\right\|_{F_j}^2+\frac{C\alpha_2^2}{4\epsilon}\left\|\frac{\lambda^{\frac12}}{h^{\frac12}}\overline{\boldsymbol{u}}_h^j \cdot  \boldsymbol{n}\right\|_{F_j}^2.
\end{eqnarray*}
Considering Lemmas \ref{inequality6} and \ref{inequality5}  we have a lower bound for each term. Now we can write the bilinear form
\begin{equation*}
\begin{split}
A_h\left(\boldsymbol{u}_h,\boldsymbol{v}_h\right)
\geq&~2\left\|\mu^{\frac12}\boldsymbol{\varepsilon}\left(\boldsymbol{u}_h\right)\right\|_{\Omega}^2+\left\|\lambda^{\frac12}\nabla \cdot \boldsymbol{u}_h\right\|_{\Omega}^2-2\epsilon\sum_{j=1}^{N_p}\left\|\mu^{\frac12}\boldsymbol{\varepsilon}(\boldsymbol{u}_h) \right\|_{P_j}^2-\left(3\epsilon\mu+4\epsilon\lambda \right)\sum_{j=1}^{N_p}\left\|\nabla \boldsymbol{u}_h\right\|_{P_j}^2\\
&+\alpha_1\left( 1-\alpha_1\frac{9C}{4\epsilon}\right) \sum_{j=1}^{N_p}\left\|\frac{\mu^{\frac12}}{h^{\frac12}}\overline{\boldsymbol{u}}_h^j \cdot \boldsymbol{\tau}\right\|_{F_j}^2+\alpha_2\left(2-\alpha_2\frac{13C}{4\epsilon}\right) \sum_{j=1}^{N_p}\left\|\frac{\mu^{\frac12}}{h^{\frac12}}\overline{\boldsymbol{u}}_h^j \cdot \boldsymbol{n}\right\|_{F_j}^2\\
&+\alpha_1\left(-\alpha_1\frac{3C}{4\epsilon}\right) \sum_{j=1}^{N_p}\left\|\frac{\lambda^{\frac12}}{h^{\frac12}}\overline{\boldsymbol{u}}_h^j \cdot \boldsymbol{\tau}\right\|_{F_j}^2+\alpha_2\left(1 -\alpha_2\frac{3C}{4\epsilon}\right) \sum_{j=1}^{N_p}\left\|\frac{\lambda^{\frac12}}{h^{\frac12}}\overline{\boldsymbol{u}}_h^j \cdot \boldsymbol{n}\right\|_{F_j}^2.
\end{split}
\end{equation*}
The Theorem \ref{korn} gives
$$\left\|\boldsymbol{\varepsilon}\left( \boldsymbol{u}_h\right)\right\|_{\Omega}+\left|\boldsymbol{u}_h\right|_{\Gamma}\geq C_K\left\|\boldsymbol{u}_h\right\|_{H^1\left(\Omega\right)}~~~~~\forall \boldsymbol{u}_h\in V_h^k.$$
Assuming that each side $\Gamma_i$ contains at least one $F_j$, the properties of the $P_0$-projection allows us to write
\begin{equation*}
\int_{\Gamma_i}\left(P_{0}\boldsymbol{u}_h\right)^2~\text{ds}\leq\sum_{j=1}^{N_{\Gamma_i}}\int_{F_j}\left(\overline{\boldsymbol{u}}_h^j\right)^2~\text{ds},
\end{equation*}
$N_{\Gamma_i}$ is the number of $F_j$ contained in the side $\Gamma_i$. Then over all the boundaries $\Gamma_i$
$$\sum_{i=1}^{N_b}\int_{\Gamma_i}\left(P_{0}{\boldsymbol{u}}_h\right)^2~\text{ds}\leq\sum_{j=1}^{N_p}\int_{F_j}\left(\overline{\boldsymbol{u}}_h^j\right)^2~\text{ds}.$$
Then we can use the following bound
$$\left\|\boldsymbol{\varepsilon}\left( \boldsymbol{u}_h\right)\right\|_{\Omega}^2+\sum_{j=1}^{N_p}\left\|\overline{\boldsymbol{u}}_h^j\right\|_{F_j}^2\geq C_K\left\|\boldsymbol{u}_h \right\|_{H^1\left(\Omega\right)}^2~~~~~\forall \boldsymbol{u}_h\in V_h^k.$$
Using this result, we can rewrite the bilinear form $A_h\left(\boldsymbol{u}_h,\boldsymbol{v}_h\right)$ as
\begin{equation*}
\begin{split}
A_h\left(\boldsymbol{u}_h,\boldsymbol{v}_h\right)
\geq&~\left\|\lambda^{\frac12}\nabla \cdot \boldsymbol{u}_h\right\|_{\Omega}^2+2C_K\left\|\mu^{\frac12}\nabla\boldsymbol{u}_h\right\|_{\Omega\backslash P}^2+\left(2\mu C_K-5\epsilon\mu-4\epsilon\lambda\right) \sum_{j=1}^{N_p}\left\|\nabla \boldsymbol{u}_h \right\|_{P_j}^2\\
&+\left(\left(\alpha_1\left(1-\alpha_1\frac{9C}{4\epsilon}\right)-2h\right)\mu-\alpha_1^2\frac{3C}{4\epsilon}\lambda\right) \sum_{j=1}^{N_p}\left\|h^{-\frac12}\overline{\boldsymbol{u}}_h^j \cdot \boldsymbol{\tau}\right\|_{F_j}^2\\
&+\left(\left(\alpha_2\left( 2-\alpha_2\frac{13C}{4\epsilon}\right)-2h\right)\mu+\alpha_2 \left( 1-\alpha_2\frac{3C}{4\epsilon}\right)\lambda\right) \sum_{j=1}^{N_p}\left\|h^{-\frac12}\overline{\boldsymbol{u}}_h^j \cdot \boldsymbol{n}\right\|_{F_j}^2.
\end{split}
\end{equation*}
Considering the inequality (\ref{inequality4}) we obtain
\begin{equation*}
\begin{split}
A_h\left(\boldsymbol{u}_h,\boldsymbol{v}_h\right)
\geq&\left\|\lambda^{\frac12}\nabla \cdot \boldsymbol{u}_h\right\|_{\Omega}^2+2C_K\left\|\mu^{\frac12}\nabla\boldsymbol{u}_h\right\|_{\Omega\backslash P}^2+\left(C_a-C_b-C_c\right)\sum_{j=1}^{N_p}\left\|\nabla \boldsymbol{u}_h \right\|_{P_j}^2\\
&+C_b\sum_{j=1}^{N_p}\left\|h^{-\frac12}\boldsymbol{u}_h \cdot \boldsymbol{\tau}\right\|_{F_j}^2+C_c\sum_{j=1}^{N_p}\left\|h^{-\frac12}\boldsymbol{u}_h \cdot \boldsymbol{n}\right\|_{F_j}^2,
\end{split}
\end{equation*}
with the constants
\begin{eqnarray*}
C_a&=&2\mu C_K-5\epsilon\mu-4\epsilon\lambda,\\
C_b&=&\left(\alpha_1\left(1-\alpha_1\frac{9C}{4\epsilon}\right)-2h\right)\mu-\alpha_1^2\frac{3C}{4\epsilon}\lambda,\\
C_c&=&\left(\alpha_2\left( 2-\alpha_2\frac{13C}{4\epsilon}\right)-2h\right)\mu+\alpha_2 \left( 1-\alpha_2\frac{3C}{4\epsilon}\right)\lambda.
\end{eqnarray*}
First we choose $\epsilon=\frac{\mu C_K}{5\mu+4\lambda}$ so that $C_a=\mu C_K$. Fix $h<h_0$ such that $C_b$ and $C_c$ are positive respectively for
$$\frac{4\mu^2C_K}{\left(9C\mu+3C\lambda\right)\left(5\mu+4\lambda\right)}>\alpha_1~~~,~~~~~\frac{4\mu C_K\left(2\mu+\lambda\right)}{\left(13C\mu+3C\lambda\right)\left(5\mu+4\lambda\right)}>\alpha_2.$$
$C_a-C_b-C_c$ will be positive for
$$\frac{C_K}{2}>\alpha_1~~~,~~~~~\frac{\mu C_K}{2\left(2\mu+\lambda\right)}>\alpha_2.$$
By looking at the order of the constants, we can see that ${\rm O}\left(\beta_0\right)={\rm O}\left(\frac{\mu}{\lambda+\mu}\right)$ and ${\rm O}\left(h_0\right)={\rm O}\left(\frac{\mu^2}{\left(\lambda+\mu\right)^2}\right)$. If $\lambda$ is large compared to $\mu$, $h_0$ has to be very small. This reflects the locking phenomena that is well known for finite element method using low order $H^1$-conforming spaces.
\end{proof}

\begin{theorem}
\label{infsup}
There exists positive constants $\beta$ and $h_0$ such that for all functions $\boldsymbol{u}_h\in V_h^k$ and for $h<h_0$, the following inequality holds
\begin{equation*}
\beta\vertiii{\boldsymbol{u}_h}\leq\underset{\boldsymbol{v}_h\in V_h^k}{\textup{sup}} \frac{A_h\left(\boldsymbol{u}_h,\boldsymbol{v}_h \right)}{\vertiii{\boldsymbol{v}_h}}.
\end{equation*}
\end{theorem}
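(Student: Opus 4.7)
The plan is to take the specific test function $\boldsymbol{v}_h=\boldsymbol{u}_h+\boldsymbol{v}_\Gamma$ from Lemma \ref{lowerbound}, which already gives the coercivity-type bound $A_h(\boldsymbol{u}_h,\boldsymbol{v}_h)\geq\beta_0\vertiii{\boldsymbol{u}_h}^2$ for the same parameters $\alpha_1,\alpha_2$ chosen there, and complement it with a continuity estimate $\vertiii{\boldsymbol{v}_h}\lesssim\vertiii{\boldsymbol{u}_h}$. Chaining the two inequalities one gets
$$
\sup_{\boldsymbol{w}_h\in V_h^k}\frac{A_h(\boldsymbol{u}_h,\boldsymbol{w}_h)}{\vertiii{\boldsymbol{w}_h}}\;\geq\;\frac{A_h(\boldsymbol{u}_h,\boldsymbol{v}_h)}{\vertiii{\boldsymbol{v}_h}}\;\geq\;\frac{\beta_0\vertiii{\boldsymbol{u}_h}^2}{C\vertiii{\boldsymbol{u}_h}}\;=\;\frac{\beta_0}{C}\vertiii{\boldsymbol{u}_h},
$$
so the theorem holds with $\beta=\beta_0/C$ and the same threshold $h_0$ as in the previous lemma.

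The real work is the continuity bound, and by the triangle inequality it reduces to $\vertiii{\boldsymbol{v}_\Gamma}\lesssim\vertiii{\boldsymbol{u}_h}$. Since $\boldsymbol{v}_\Gamma$ is supported in the union of patches $P=\cup_j P_j$, I would estimate patchwise. Using rotation invariance of the Frobenius norm together with \eqref{keyineq1} and \eqref{keyineq2} applied to the rotated components of $\boldsymbol{v}_j=(\alpha_1\hat v_1,\alpha_2\hat v_2)^{\rm T}$, one obtains
$$
\|\nabla\boldsymbol{v}_j\|_{P_j}^2\;\leq\;C\Bigl(\alpha_1^2\|h^{-\frac12}\overline{\boldsymbol{u}}_h^j\cdot\boldsymbol{\tau}\|_{F_j}^2+\alpha_2^2\|h^{-\frac12}\overline{\boldsymbol{u}}_h^j\cdot\boldsymbol{n}\|_{F_j}^2\Bigr),
$$
and because $\|\nabla\cdot\boldsymbol{v}_j\|_{P_j}\leq\sqrt{2}\|\nabla\boldsymbol{v}_j\|_{P_j}$ the same bound controls the divergence term. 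Jensen's inequality gives $\|\overline{\boldsymbol{u}}_h^j\|_{F_j}\leq\|\boldsymbol{u}_h\|_{F_j}$, and the boundary trace $\|h^{-\frac12}\boldsymbol{v}_j\|_{F_j}$ reduces to $\|\nabla\boldsymbol{v}_j\|_{P_j}$ through the trace inequality of Lemma \ref{trace} combined with the Poincar\'e-type estimate \eqref{inequality7}. Summing over $j$ and recalling that the boundary elements cover $\partial\Omega$, the $\mu$-weighted parts of $\vertiii{\boldsymbol{v}_\Gamma}$ are absorbed into $\mu\|h^{-\frac12}\boldsymbol{u}_h\|_{\partial\Omega}^2\leq\vertiii{\boldsymbol{u}_h}^2$, while the $\lambda$-weighted normal boundary and divergence contributions will yield terms of the form $\lambda\alpha_2^2\|h^{-\frac12}\boldsymbol{u}_h\cdot\boldsymbol{n}\|_{\partial\Omega}^2$ and $\lambda\alpha_1^2\|h^{-\frac12}\boldsymbol{u}_h\cdot\boldsymbol{\tau}\|_{\partial\Omega}^2$.

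The hard part is the last of these, since the triple norm charges the tangential boundary component of $\boldsymbol{u}_h$ only with weight $\mu$, not $\lambda$. This is precisely where I would exploit the smallness of $\alpha_1$ enforced in Lemma \ref{lowerbound}: the admissible range $\alpha_1={\rm O}(\mu^2/(\mu+\lambda)^2)$ together with $h<h_0={\rm O}(\mu^2/(\mu+\lambda)^2)$ guarantees $\lambda\alpha_1^2\lesssim\mu$, so that the offending term is reabsorbed into the $\mu$-weighted boundary piece of $\vertiii{\boldsymbol{u}_h}$. After collecting the four contributions the continuity constant $C$ is uniform in $h$ (for $h<h_0$) but scales like a power of $(\mu+\lambda)/\mu$, which combined with $\beta_0={\rm O}(\mu/(\mu+\lambda))$ gives an inf-sup constant $\beta$ that degenerates with $\lambda/\mu$, consistent with the locking observation already made after Lemma \ref{lowerbound}.
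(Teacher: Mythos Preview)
Your proposal is correct and follows essentially the same route as the paper: invoke Lemma~\ref{lowerbound} for the lower bound $A_h(\boldsymbol{u}_h,\boldsymbol{v}_h)\geq\beta_0\vertiii{\boldsymbol{u}_h}^2$ with the specific test function $\boldsymbol{v}_h=\boldsymbol{u}_h+\boldsymbol{v}_\Gamma$, reduce via the triangle inequality to $\vertiii{\boldsymbol{v}_\Gamma}\lesssim\vertiii{\boldsymbol{u}_h}$, and control each piece of the triple norm of $\boldsymbol{v}_\Gamma$ through \eqref{keyineq1}, \eqref{keyineq2}, \eqref{inequality7} and the trace inequality. You are in fact more explicit than the paper about the one delicate point, namely the $\lambda$-weighted tangential contribution $\lambda\alpha_1^2\|h^{-1/2}\boldsymbol{u}_h\cdot\boldsymbol{\tau}\|_{\partial\Omega}^2$, which indeed has to be absorbed into the $\mu$-weighted boundary term of the triple norm via the smallness of $\alpha_1$; the paper hides this inside a single $\lesssim$.

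One small sharpening: with the admissible $\alpha_1={\rm O}(\mu^2/(\mu+\lambda)^2)$ and $\alpha_2={\rm O}(\mu/(\mu+\lambda))$ from Lemma~\ref{lowerbound}, the ratio $\lambda\alpha_1^2/\mu$ is in fact bounded uniformly in $\mu,\lambda$ (maximise $t/(1+t)^4$ over $t=\lambda/\mu$), so the continuity constant $C$ in $\vertiii{\boldsymbol{v}_h}\leq C\vertiii{\boldsymbol{u}_h}$ is ${\rm O}(1)$ rather than a growing power of $(\mu+\lambda)/\mu$. The degeneration of $\beta$ with $\lambda/\mu$ therefore comes entirely from $\beta_0$, which matches the paper's remark that ${\rm O}(\beta)={\rm O}(\mu/(\lambda+\mu))$.
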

\begin{proof}
Considering Lemma \ref{lowerbound}, the only thing that we need to show is
\begin{equation}
\label{inequalitytriple}
\vertiii{ \boldsymbol{v}_h} \lesssim \vertiii{ \boldsymbol{u}_h}.
\end{equation}
Using the definition of the test function, the triangle inequality gives
$$\vertiii{ \boldsymbol{v}_h} \leq \vertiii{ \boldsymbol{u}_h}+\vertiii{ \boldsymbol{v}_\Gamma}.$$
The definition of the triple norm gives
\begin{equation*}
\vertiii{ \boldsymbol{v}_\Gamma}^2
=\mu\left(\left\|\nabla\boldsymbol{v}_\Gamma\right\|_{\Omega}^2+\left\|h^{-\frac12}\boldsymbol{v}_\Gamma\right\|_{\partial\Omega}^2\right)+\lambda\left(\left\|\nabla \cdot \boldsymbol{v}_\Gamma\right\|_{\Omega}^2+\left\|h^{-\frac12}\boldsymbol{v}_\Gamma\cdot \boldsymbol{n}\right\|_{\partial\Omega}^2\right).
\end{equation*}
We observe that
\begin{eqnarray*}
\alpha_1\left\|\frac{\mu^\frac12}{h^{\frac12}}\overline{\boldsymbol{u}}_h^j\cdot \boldsymbol{\tau}\right\|_{F_j}+\alpha_2\left\|\frac{\mu^\frac12}{h^{\frac12}}\overline{\boldsymbol{u}}_h^j\cdot \boldsymbol{n}\right\|_{F_j}\lesssim\alpha_1\left\|\frac{\mu^\frac12}{h^{\frac12}}\boldsymbol{u}_h\cdot \boldsymbol{\tau}\right\|_{F_j}+\alpha_2\left\|\frac{\mu^\frac12}{h^{\frac12}}\boldsymbol{u}_h\cdot \boldsymbol{n}\right\|_{F_j}&\lesssim& \vertiii{\boldsymbol{u}_h},\\
\alpha_1\left\|\frac{\lambda^\frac12}{h^{\frac12}}\overline{\boldsymbol{u}}_h^j\cdot \boldsymbol{\tau}\right\|_{F_j}+\alpha_2\left\|\frac{\lambda^\frac12}{h^{\frac12}}\overline{\boldsymbol{u}}_h^j\cdot \boldsymbol{n}\right\|_{F_j}\lesssim\alpha_1\left\|\frac{\lambda^\frac12}{h^{\frac12}}\boldsymbol{u}_h\cdot \boldsymbol{\tau}\right\|_{F_j}+\alpha_2\left\|\frac{\lambda^\frac12}{h^{\frac12}}\boldsymbol{u}_h\cdot \boldsymbol{n}\right\|_{F_j}&\lesssim& \vertiii{\boldsymbol{u}_h},
\end{eqnarray*}
using this results and recalling the inequalities \eqref{keyineq1} \eqref{keyineq2},
 it gives the appropriate upper bounds considering the definition of $\boldsymbol{v}_\Gamma$
\begin{eqnarray}
\label{ref1}
\left\| \mu^\frac12\nabla \boldsymbol{v}_\Gamma\right\|_{\Omega}&\lesssim&\vertiii{\boldsymbol{u}_h},\\
\left\| \lambda^\frac12\nabla \cdot \boldsymbol{v}_\Gamma\right\|_{\Omega}\leq\left\| \lambda^\frac12\nabla \boldsymbol{v}_\Gamma\right\|_{\Omega}&\lesssim&\vertiii{\boldsymbol{u}_h}.\nonumber
\end{eqnarray}
Using the trace inequality \ref{trace} for the boundary terms and the inequality (\ref{inequality7}) we can write
\begin{eqnarray}
\label{ref2}
\left\|\frac{\mu^\frac12}{h^{\frac12}}\boldsymbol{v}_\Gamma\right\|_{\partial\Omega}\lesssim\left\|\mu^\frac12 \nabla \boldsymbol{v}_\Gamma\right\|_{\Omega}&\lesssim&\vertiii{\boldsymbol{u}_h},\\
\left\|\frac{\lambda^\frac12}{h^{\frac12}}\boldsymbol{v}_\Gamma\cdot \boldsymbol{n}\right\|_{\partial\Omega}\lesssim\left\|\lambda^\frac12 \nabla \boldsymbol{v}_\Gamma\right\|_{\Omega}&\lesssim&\vertiii{\boldsymbol{u}_h}.\nonumber
\end{eqnarray}
We note that ${\rm O}\left(\beta\right)={\rm O}\left(\frac{\mu}{\lambda+\mu}\right)$.
\end{proof}

\subsection{A priori error estimate}
Using the stability proven in the previous section we may deduce the a priori error estimate in the triple norm. We first prove the consistency of the method in the form of a Galerkin orthogonality.
\begin{lemma}
\label{galerkin}
If $\boldsymbol{u}\in \left[H^{2}\left(\Omega\right)\right]^2 $ is the solution of (\ref{elasticity}) and $\boldsymbol{u}_h\in V_h^k$ the solution of (\ref{formulationelasticity}) the following property holds
$$A_h\left(\boldsymbol{u}-\boldsymbol{u}_h,\boldsymbol{v}_h\right)=0~~,~~~~\forall\boldsymbol{v}_h\in V_h^k.$$
\end{lemma}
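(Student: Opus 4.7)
The plan is standard: show that the continuous solution $\boldsymbol{u}$ satisfies the discrete equation $A_h(\boldsymbol{u},\boldsymbol{v}_h)=L_h(\boldsymbol{v}_h)$ for every $\boldsymbol{v}_h\in V_h^k$, and then subtract the definition of the discrete solution to obtain the orthogonality. The $[H^2(\Omega)]^2$ regularity of $\boldsymbol{u}$ is what makes all the surface terms we will generate well defined as $L^2(\partial\Omega)$ duality pairings, so there is no functional-analytic subtlety.

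First I would multiply the strong equation $-\nabla\cdot\boldsymbol{\sigma}(\boldsymbol{u})=\boldsymbol{f}$ by an arbitrary $\boldsymbol{v}_h\in V_h^k$ and integrate over $\Omega$. Since $\boldsymbol{\sigma}(\boldsymbol{u})\in [H^1(\Omega)]^{2\times 2}$ and $\boldsymbol{v}_h\in V_h^k\subset V$, Green's formula gives
\begin{equation*}
(\boldsymbol{f},\boldsymbol{v}_h)_\Omega \;=\; (\boldsymbol{\sigma}(\boldsymbol{u}),\boldsymbol{\varepsilon}(\boldsymbol{v}_h))_\Omega \;-\; \langle\boldsymbol{\sigma}(\boldsymbol{u})\cdot\boldsymbol{n},\boldsymbol{v}_h\rangle_{\partial\Omega}.
\end{equation*}
Expanding $\boldsymbol{\sigma}(\boldsymbol{u})=2\mu\boldsymbol{\varepsilon}(\boldsymbol{u})+\lambda(\nabla\cdot\boldsymbol{u})\mathbb{I}_{2\times 2}$ and using the identity $(\boldsymbol{\varepsilon}(\boldsymbol{u}),\mathbb{I}_{2\times 2}\boldsymbol{\varepsilon}(\boldsymbol{v}_h))_\Omega$ reduces to the symmetric bilinear form, so the volume contribution becomes exactly $a(\boldsymbol{u},\boldsymbol{v}_h)$, while the boundary contribution is precisely $b(\boldsymbol{u},\boldsymbol{v}_h)$ as defined in the paper. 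Hence
\begin{equation*}
a(\boldsymbol{u},\boldsymbol{v}_h)-b(\boldsymbol{u},\boldsymbol{v}_h)=(\boldsymbol{f},\boldsymbol{v}_h)_\Omega.
\end{equation*}

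Next I would use the Dirichlet condition $\boldsymbol{u}=\boldsymbol{g}$ on $\partial\Omega$, which holds in the trace sense thanks to $\boldsymbol{u}\in [H^2(\Omega)]^2$. Since $b(\boldsymbol{v}_h,\cdot)$ depends on its second argument only through its boundary trace, we have $b(\boldsymbol{v}_h,\boldsymbol{u})=b(\boldsymbol{v}_h,\boldsymbol{g})$. Adding this quantity to both sides of the previous identity yields
\begin{equation*}
A_h(\boldsymbol{u},\boldsymbol{v}_h)=a(\boldsymbol{u},\boldsymbol{v}_h)-b(\boldsymbol{u},\boldsymbol{v}_h)+b(\boldsymbol{v}_h,\boldsymbol{u})=(\boldsymbol{f},\boldsymbol{v}_h)_\Omega+b(\boldsymbol{v}_h,\boldsymbol{g})=L_h(\boldsymbol{v}_h).
\end{equation*}

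Finally, subtracting the discrete equation $A_h(\boldsymbol{u}_h,\boldsymbol{v}_h)=L_h(\boldsymbol{v}_h)$ and invoking the bilinearity of $A_h$ gives $A_h(\boldsymbol{u}-\boldsymbol{u}_h,\boldsymbol{v}_h)=0$. There is no real obstacle here; the only thing worth being careful about is that the unusual skew-symmetric ordering of $b$ in $A_h$ means consistency is checked by showing that the \emph{added} term $b(\boldsymbol{v}_h,\boldsymbol{u})$ evaluates correctly on the exact solution, which it does precisely because $\boldsymbol{u}=\boldsymbol{g}$ on $\partial\Omega$.
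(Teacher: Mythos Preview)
Your argument is correct and follows exactly the approach the paper takes: the paper's proof is the single line ``We observe that $A_h(\boldsymbol{u},\boldsymbol{v}_h)=L_h(\boldsymbol{v}_h)=A_h(\boldsymbol{u}_h,\boldsymbol{v}_h)$'', and you have simply written out the integration-by-parts and trace-substitution details that justify the first equality.
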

\begin{proof}
We observe that $A_h\left(\boldsymbol{u},\boldsymbol{v}_h\right)=L_h\left(\boldsymbol{v}_h\right)=A_h\left(\boldsymbol{u}_h,\boldsymbol{v}_h\right),~\forall \boldsymbol{v}_h\in V_h^k.$
\end{proof}

We introduce an auxiliary norm, in order to study the a priori error estimate
$$\left\|\boldsymbol{w}\right\|_*=\vertiii{ \boldsymbol{w} }+\left\|\mu^{\frac12}h^{\frac12}\nabla \boldsymbol{w}\right\|_{\partial\Omega}+ \left\|\lambda^{\frac12}h^{\frac12}\nabla \cdot \boldsymbol{w}\right\|_{\partial\Omega}.$$
\begin{lemma}
\label{triplestar}
Let $\boldsymbol{w}\in \left[H^{2}\left(\Omega\right)\right]^2+V_h^k$ and $\boldsymbol{v}_h\in V_h^k$, there exists a positive constant $M$ such that the bilinear form $A_h\left(\cdot,\cdot\right)$ has the property
$$A_h\left(\boldsymbol{w},\boldsymbol{v}_h\right)\leq M\left\|\boldsymbol{w}\right\|_*\vertiii{ \boldsymbol{v}_h}.$$
\end{lemma}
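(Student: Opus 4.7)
The plan is to decompose $A_h(\boldsymbol{w},\boldsymbol{v}_h)$ into its three constitutive pieces, $a(\boldsymbol{w},\boldsymbol{v}_h)$, $-b(\boldsymbol{w},\boldsymbol{v}_h)$ and $b(\boldsymbol{v}_h,\boldsymbol{w})$, and bound each by Cauchy--Schwarz, absorbing the $\mu^{1/2}$ and $\lambda^{1/2}$ weights into the factors in such a way that one side is controlled by $\|\boldsymbol{w}\|_*$ and the other by $\vertiii{\boldsymbol{v}_h}$.

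First I would treat the volume form $a(\boldsymbol{w},\boldsymbol{v}_h)$. Splitting it into its elastic and divergence contributions, Cauchy--Schwarz in $L^2(\Omega)$ together with $\|\boldsymbol{\varepsilon}(\boldsymbol{w})\|_\Omega \leq \|\nabla\boldsymbol{w}\|_\Omega$ gives directly $a(\boldsymbol{w},\boldsymbol{v}_h)\lesssim \vertiii{\boldsymbol{w}}\,\vertiii{\boldsymbol{v}_h}$, since both $\|\mu^{1/2}\nabla\cdot\|_\Omega$ and $\|\lambda^{1/2}\nabla\cdot\|_\Omega$ type terms appear in the triple norm.

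Next I would handle the Nitsche boundary contribution $-b(\boldsymbol{w},\boldsymbol{v}_h)$. Here $\boldsymbol{w}$ is not necessarily a finite element function, so we cannot use an inverse estimate on $\nabla\boldsymbol{w}$. Instead, the weighted Cauchy--Schwarz
\begin{equation*}
\langle 2\mu\boldsymbol{\varepsilon}(\boldsymbol{w})\cdot\boldsymbol{n},\boldsymbol{v}_h\rangle_{\partial\Omega}
\leq 2\,\bigl\|\mu^{1/2}h^{1/2}\boldsymbol{\varepsilon}(\boldsymbol{w})\bigr\|_{\partial\Omega}\,
\bigl\|\mu^{1/2}h^{-1/2}\boldsymbol{v}_h\bigr\|_{\partial\Omega}
\end{equation*}
together with the analogous bound for the divergence term, lets the first factor be absorbed into the extra contributions $\|\mu^{1/2}h^{1/2}\nabla\boldsymbol{w}\|_{\partial\Omega}+\|\lambda^{1/2}h^{1/2}\nabla\cdot\boldsymbol{w}\|_{\partial\Omega}$ appearing in $\|\boldsymbol{w}\|_*$, and the second into $\vertiii{\boldsymbol{v}_h}$. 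This is exactly why the auxiliary star norm is built with those two additional boundary terms.

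Finally I would bound the antisymmetric piece $b(\boldsymbol{v}_h,\boldsymbol{w})$ by the mirror inequality
\begin{equation*}
\langle 2\mu\boldsymbol{\varepsilon}(\boldsymbol{v}_h)\cdot\boldsymbol{n},\boldsymbol{w}\rangle_{\partial\Omega}
\leq 2\,\bigl\|\mu^{1/2}h^{1/2}\boldsymbol{\varepsilon}(\boldsymbol{v}_h)\bigr\|_{\partial\Omega}\,
\bigl\|\mu^{1/2}h^{-1/2}\boldsymbol{w}\bigr\|_{\partial\Omega},
\end{equation*}
and analogously for the $\lambda$ term. Now the first factor is purely discrete, so combining Lemma \ref{trace} with the inverse estimate of Lemma \ref{inverse} yields $\|h^{1/2}\boldsymbol{\varepsilon}(\boldsymbol{v}_h)\|_{\partial\Omega}\lesssim \|\nabla\boldsymbol{v}_h\|_\Omega$ and similarly $\|h^{1/2}\nabla\cdot\boldsymbol{v}_h\|_{\partial\Omega}\lesssim \|\nabla\boldsymbol{v}_h\|_\Omega$, after summation element by element; both are then absorbed into $\vertiii{\boldsymbol{v}_h}$. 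The remaining boundary factor in $\boldsymbol{w}$ is already part of $\vertiii{\boldsymbol{w}}\leq \|\boldsymbol{w}\|_*$. Summing the three estimates produces the claimed continuity with a constant $M$ depending only on $C_T$, $C_I$ and the shape-regularity.

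The only step that requires any care is the $b(\boldsymbol{w},\boldsymbol{v}_h)$ bound, since the asymmetry between $\boldsymbol{w}\in [H^2(\Omega)]^2+V_h^k$ and $\boldsymbol{v}_h\in V_h^k$ forces us to use the star norm rather than the triple norm for $\boldsymbol{w}$; everything else reduces to standard Cauchy--Schwarz plus the discrete trace estimate.
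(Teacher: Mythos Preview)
Your argument is correct and follows essentially the same route as the paper's proof: Cauchy--Schwarz for the bulk form $a(\boldsymbol{w},\boldsymbol{v}_h)$, weighted Cauchy--Schwarz exploiting the extra boundary terms in $\|\cdot\|_*$ for $b(\boldsymbol{w},\boldsymbol{v}_h)$, and the discrete trace estimate (Lemma~\ref{trace} combined with Lemma~\ref{inverse}) for $b(\boldsymbol{v}_h,\boldsymbol{w})$.

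There is one small slip. In the $\lambda$-part of $b(\boldsymbol{v}_h,\boldsymbol{w})$ you bound $\|h^{1/2}\nabla\cdot\boldsymbol{v}_h\|_{\partial\Omega}\lesssim\|\nabla\boldsymbol{v}_h\|_\Omega$; with the $\lambda^{1/2}$ prefactor this gives $\lambda^{1/2}\|\nabla\boldsymbol{v}_h\|_\Omega$, which is \emph{not} controlled by $\vertiii{\boldsymbol{v}_h}$ (the triple norm carries only $\mu^{1/2}\|\nabla\boldsymbol{v}_h\|_\Omega$ and $\lambda^{1/2}\|\nabla\cdot\boldsymbol{v}_h\|_\Omega$), so your final claim that $M$ depends only on $C_T$, $C_I$ and the shape regularity would break down when $\lambda\gg\mu$. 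The remedy is immediate: apply the discrete trace estimate directly to the scalar function $\nabla\cdot\boldsymbol{v}_h$ to get $\|h^{1/2}\nabla\cdot\boldsymbol{v}_h\|_{\partial\Omega}\lesssim\|\nabla\cdot\boldsymbol{v}_h\|_\Omega$, and then $\lambda^{1/2}\|\nabla\cdot\boldsymbol{v}_h\|_\Omega\leq\vertiii{\boldsymbol{v}_h}$ as required.
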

\begin{proof}
Using the Cauchy-Schwarz inequality it is straightforward to write
\begin{eqnarray*}
\left(\lambda\nabla \cdot \boldsymbol{w},\nabla \cdot \boldsymbol{v}_h \right)_\Omega+\left( 2\mu\boldsymbol{\varepsilon}\left(\boldsymbol{w}\right),\boldsymbol{\varepsilon}\left(\boldsymbol{v}_h\right)\right)_\Omega
&\lesssim&\left\| \boldsymbol{w}\right\|_*\vertiii{ \boldsymbol{v}_h},\\
\left\langle\lambda\nabla \cdot \boldsymbol{w},  \boldsymbol{v}_h\cdot \boldsymbol{n}\right\rangle_{\partial \Omega}+\left\langle\lambda\nabla \cdot \boldsymbol{v}_h, \boldsymbol{w} \cdot \boldsymbol{n}\right\rangle_{\partial \Omega}
&\lesssim&\left\|\boldsymbol{w}\right\|_*\vertiii{ \boldsymbol{v}_h}.
\end{eqnarray*}
The trace inequality and the inequality (\ref{inequality7}) allows us to write
\begin{eqnarray*}
\left\langle 2 \mu\boldsymbol{\varepsilon}\left(\boldsymbol{w}\right) \cdot \boldsymbol{n},\boldsymbol{v}_h \right\rangle_{\partial \Omega}
&\lesssim& \left\| \mu^{\frac12} h^{\frac12}\nabla \boldsymbol{w}\right\|_{\partial\Omega}\left\|\frac{\mu^{\frac12}}{h^{\frac12}}\boldsymbol{v}_h\right\|_{\partial \Omega}\lesssim\left\| \boldsymbol{w}\right\|_*\vertiii{ \boldsymbol{v}_h},\\
\left\langle 2\mu\boldsymbol{\varepsilon}\left(\boldsymbol{v}_h\right) \cdot \boldsymbol{n},\boldsymbol{w} \right\rangle_{\partial \Omega}
&\lesssim&\left\| \mu^{\frac12} \nabla\boldsymbol{v}_h\right\|_{\Omega} \left\| \frac{\mu^{\frac12}}{h^{\frac12}} \boldsymbol{w}\right\|_{\partial\Omega}\lesssim\left\|\boldsymbol{w}\right\|_*\vertiii{ \boldsymbol{v}_h}.
\end{eqnarray*}
\end{proof}

\begin{proposition}
\label{bounderror}
If $\boldsymbol{u}\in \left[H^{k+1}\left(\Omega\right)\right]^2$ is the solution of (\ref{elasticity}) and $\boldsymbol{u}_h\in V_h^k$ the solution of (\ref{formulationelasticity}) with $h<h_0$, then there holds
$$\vertiii{\boldsymbol{u}-\boldsymbol{u}_h}\leq C_{\mu\lambda} h^k\left|\boldsymbol{u}\right|_{H^{k+1}\left(\Omega\right)},$$
where $C_{\mu\lambda}$ is a positive constant that depends on $\mu$, $\lambda$ and the mesh geometry.
\end{proposition}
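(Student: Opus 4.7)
The plan is the standard inf-sup plus continuity plus interpolation argument, combined with a triangle-inequality split of the error. Let $\boldsymbol{i}_h \boldsymbol{u} \in V_h^k$ denote a suitable quasi-interpolant (Scott--Zhang or Cl\'ement) of $\boldsymbol{u}$, and write
\begin{equation*}
\boldsymbol{u} - \boldsymbol{u}_h = (\boldsymbol{u} - \boldsymbol{i}_h \boldsymbol{u}) + (\boldsymbol{i}_h \boldsymbol{u} - \boldsymbol{u}_h) =: \boldsymbol{\eta} + \boldsymbol{e}_h,
\end{equation*}
so that $\vertiii{\boldsymbol{u}-\boldsymbol{u}_h} \le \vertiii{\boldsymbol{\eta}} + \vertiii{\boldsymbol{e}_h}$. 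The discrete quantity $\boldsymbol{e}_h \in V_h^k$ is then controlled by the inf-sup stability of Theorem~\ref{infsup}, which is available since $h<h_0$.

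First I would handle the interpolation part. Using standard approximation estimates for $\boldsymbol{i}_h$ and the trace inequality of Lemma~\ref{trace}, I would show that each term making up $\vertiii{\boldsymbol{\eta}}$ and the two extra boundary terms in $\|\cdot\|_*$ are bounded by $C_{\mu\lambda} h^k |\boldsymbol{u}|_{H^{k+1}(\Omega)}$; indeed $\|\nabla \boldsymbol{\eta}\|_\Omega \lesssim h^k |\boldsymbol{u}|_{H^{k+1}(\Omega)}$, and the boundary contributions $\|h^{-1/2}\boldsymbol{\eta}\|_{\partial\Omega}$ and $\|h^{1/2}\nabla\boldsymbol{\eta}\|_{\partial\Omega}$ follow from the trace inequality applied elementwise combined with local interpolation estimates on each boundary element. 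This yields both $\vertiii{\boldsymbol{\eta}} \lesssim C_{\mu\lambda} h^k |\boldsymbol{u}|_{H^{k+1}(\Omega)}$ and $\|\boldsymbol{\eta}\|_* \lesssim C_{\mu\lambda} h^k |\boldsymbol{u}|_{H^{k+1}(\Omega)}$.

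Next I would estimate $\vertiii{\boldsymbol{e}_h}$. By Theorem~\ref{infsup} there exists $\boldsymbol{v}_h \in V_h^k$ with $\vertiii{\boldsymbol{v}_h} \ne 0$ realising (up to a factor) the supremum, giving
\begin{equation*}
\beta \vertiii{\boldsymbol{e}_h} \le \frac{A_h(\boldsymbol{e}_h, \boldsymbol{v}_h)}{\vertiii{\boldsymbol{v}_h}}.
\end{equation*}
The Galerkin orthogonality of Lemma~\ref{galerkin} applied to $\boldsymbol{u}-\boldsymbol{u}_h$ gives $A_h(\boldsymbol{e}_h, \boldsymbol{v}_h) = -A_h(\boldsymbol{\eta}, \boldsymbol{v}_h)$, and the continuity result of Lemma~\ref{triplestar} then yields $|A_h(\boldsymbol{\eta}, \boldsymbol{v}_h)| \le M \|\boldsymbol{\eta}\|_* \vertiii{\boldsymbol{v}_h}$. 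Combining these gives $\vertiii{\boldsymbol{e}_h} \le (M/\beta) \|\boldsymbol{\eta}\|_*$, which with the interpolation bound above produces the stated estimate after applying the triangle inequality one last time, with $C_{\mu\lambda}$ absorbing $M/\beta$ and the constants from interpolation.

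The only real subtlety is the dependence of the final constant on the Lam\'e parameters: since $\beta = \mathcal{O}(\mu/(\mu+\lambda))$ (as noted in the proof of Theorem~\ref{infsup}), the prefactor $C_{\mu\lambda}$ degrades as $\lambda/\mu \to \infty$, which is the expected locking phenomenon. No further argument beyond bookkeeping of constants is needed, since the proposition explicitly allows $C_{\mu\lambda}$ to depend on $\mu$ and $\lambda$.
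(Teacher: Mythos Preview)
Your argument is correct and essentially identical to the paper's own proof: both split the error via the Scott--Zhang interpolant, bound the interpolation part in the $\vertiii{\cdot}$ and $\|\cdot\|_*$ norms directly, and control the discrete part using the inf-sup condition (Theorem~\ref{infsup}), Galerkin orthogonality (Lemma~\ref{galerkin}), and the continuity estimate (Lemma~\ref{triplestar}). Your observation on the $\mu,\lambda$-dependence of $C_{\mu\lambda}$ via $\beta$ also matches the paper's concluding remark.
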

\begin{proof}
Let $i_{\mathtt{SZ}}^k$ denote the Scott-Zhang interpolant \citep{Scott_1990_a}. The approximation property of the interpolant may be written for each  $K\in\mathcal{T}_h$
\begin{eqnarray*}
\left\|\boldsymbol{u}-i_{\mathtt{SZ}}^k\boldsymbol{u}\right\|_{K}+h_K\left\|\nabla\left(\boldsymbol{u}-i_{\mathtt{SZ}}^k\boldsymbol{u}\right)\right\|_{K}+h_K^2\left\|D^2\left(\boldsymbol{u}-i_{\mathtt{SZ}}^k\boldsymbol{u}\right)\right\|_{K}\lesssim h_K^{k+1}\left| \boldsymbol{u} \right|_{H^{k+1}(S_K)}.
\end{eqnarray*}
With $S_K:=\text{interior}\left(\cup\left\{\overline{K}_i|\overline{K}_i\cap\overline{K}\neq\emptyset, K_i\in\mathcal{T}_h\right\}\right)$. Using this property and the trace inequality it is straightforward to show that
\begin{eqnarray*}
\vertiii{\boldsymbol{u}-i_{\mathtt{SZ}}^k\boldsymbol{u}} &\lesssim& \left(\lambda^\frac12+\mu^\frac12\right) h^{k}\left| \boldsymbol{u} \right|_{H^{k+1}(\Omega)},\\
\left\|\boldsymbol{u}-i_{\mathtt{SZ}}^k\boldsymbol{u}\right\|_*&\lesssim& \left(\lambda^\frac12+\mu^\frac12\right) h^{k}\left| \boldsymbol{u} \right|_{H^{k+1}(\Omega)}.
\end{eqnarray*}
Using Theorem \ref{infsup}, the Galerkin orthogonality of Lemma \ref{galerkin}, and the Lemma \ref{triplestar} we deduce
$$\beta\vertiii{\boldsymbol{u}_h-i_{\mathtt{SZ}}^k\boldsymbol{u}}\leq\frac{A_h\left(\boldsymbol{u}-i_{\mathtt{SZ}}^k\boldsymbol{u},\boldsymbol{v}_h\right)}{\vertiii{\boldsymbol{v}_h}}\leq M\left\|\boldsymbol{u}-i_{\mathtt{SZ}}^k\boldsymbol{u}\right\|_*.$$
This inequality together with a triangle inequality leads to the desired estimate
$$\vertiii{\boldsymbol{u}-\boldsymbol{u}_h}\leq\vertiii{\boldsymbol{u}-i_{\mathtt{SZ}}^k\boldsymbol{u}}+\frac{M}{\beta}\left\|\boldsymbol{u}-i_{\mathtt{SZ}}^k\boldsymbol{u}\right\|_*.$$
We see that the constant in the estimate satisfies : ${\rm O}\left(C_{\mu\lambda}\right)={\rm O}\left(\beta^{-1}\left(\lambda^\frac12+\mu^{\frac12}\right)\right)$.
\end{proof}

The convergence of the $L^2$-error suffers of suboptimality of order ${\rm O}\left(h^{1/2}\right)$ due to the lack of adjoint consistency of the nonsymmetric formulation.
\begin{proposition}
\label{L2stabelast}
Let $\boldsymbol{u}\in\left[H^{k+1}(\Omega)\right]^2$ be the solution of (\ref{elasticity}) and $\boldsymbol{u}_h$ the solution of (\ref{formulationelasticity}) with $h<h_0$, then
$$\left\|\boldsymbol{u}-\boldsymbol{u}_h\right\|_{\Omega}\leq C_{\mu\lambda}' h^{k+\frac12}\left|\boldsymbol{u}\right|_{H^{k+1}(\Omega)},$$
where $C_{\mu\lambda}'$ is a positive constant that depends on $\mu$, $\lambda$ and the mesh geometry.
\end{proposition}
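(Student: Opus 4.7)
The plan is to run an Aubin–Nitsche duality argument, which for the nonsymmetric formulation will lose half an order because the adjoint consistency term does not cancel. Let $\boldsymbol{e}:=\boldsymbol{u}-\boldsymbol{u}_h$ and introduce the dual problem: find $\boldsymbol{\phi}\in V_0$ satisfying $-\nabla\cdot\boldsymbol{\sigma}(\boldsymbol{\phi})=\boldsymbol{e}$ in $\Omega$ with $\boldsymbol{\phi}=\boldsymbol{0}$ on $\partial\Omega$. Since $\Omega$ is convex polygonal, elliptic regularity gives $\|\boldsymbol{\phi}\|_{H^{2}(\Omega)}\lesssim\|\boldsymbol{e}\|_{\Omega}$ (with a constant depending on $\mu,\lambda$). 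Testing against $\boldsymbol{e}$, integrating by parts, and using $\boldsymbol{\phi}|_{\partial\Omega}=\boldsymbol{0}$ together with the symmetry of $a(\cdot,\cdot)$, one obtains the error representation
$$\|\boldsymbol{e}\|_{\Omega}^{2}=a(\boldsymbol{e},\boldsymbol{\phi})-b(\boldsymbol{\phi},\boldsymbol{e})=A_h(\boldsymbol{e},\boldsymbol{\phi})-2\,b(\boldsymbol{\phi},\boldsymbol{e}),$$
where the second identity uses $b(\boldsymbol{e},\boldsymbol{\phi})=0$. The term $-2\,b(\boldsymbol{\phi},\boldsymbol{e})$ would be absent for a symmetric Nitsche formulation; its persistence is precisely what forces the loss of $h^{1/2}$.

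For the first term I would exploit Galerkin orthogonality (Lemma \ref{galerkin}) to replace $\boldsymbol{\phi}$ by $\boldsymbol{\phi}-i_{\mathtt{SZ}}^{k}\boldsymbol{\phi}$, and then invoke a Cauchy–Schwarz variant of Lemma \ref{triplestar} (using continuous trace inequalities instead of discrete ones, since the second argument is no longer in $V_h^k$) to get
$$A_h(\boldsymbol{e},\boldsymbol{\phi})=A_h(\boldsymbol{e},\boldsymbol{\phi}-i_{\mathtt{SZ}}^{k}\boldsymbol{\phi})\lesssim\|\boldsymbol{e}\|_{*}\,\|\boldsymbol{\phi}-i_{\mathtt{SZ}}^{k}\boldsymbol{\phi}\|_{*}.$$
Scott–Zhang approximation in the $\|\cdot\|_{*}$ norm yields $\|\boldsymbol{\phi}-i_{\mathtt{SZ}}^{k}\boldsymbol{\phi}\|_{*}\lesssim h\,\|\boldsymbol{\phi}\|_{H^{2}(\Omega)}$, while Proposition \ref{bounderror} together with a triangle inequality against $i_{\mathtt{SZ}}^{k}\boldsymbol{u}$ (and the discrete trace inequality to control the boundary pieces of $\boldsymbol{u}_h-i_{\mathtt{SZ}}^{k}\boldsymbol{u}$) gives $\|\boldsymbol{e}\|_{*}\lesssim h^{k}|\boldsymbol{u}|_{H^{k+1}(\Omega)}$. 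Combined with elliptic regularity, this produces the optimal contribution $A_h(\boldsymbol{e},\boldsymbol{\phi})\lesssim h^{k+1}|\boldsymbol{u}|_{H^{k+1}(\Omega)}\|\boldsymbol{e}\|_{\Omega}$.

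The dominant, suboptimal contribution comes from the non-consistent boundary term $b(\boldsymbol{\phi},\boldsymbol{e})$. I would bound it directly by Cauchy–Schwarz on $\partial\Omega$, pairing an $h^{1/2}$-weighted factor on $\boldsymbol{\phi}$ (controlled by the continuous trace inequality $\|\nabla\boldsymbol{\phi}\|_{\partial\Omega}\lesssim\|\boldsymbol{\phi}\|_{H^{2}(\Omega)}$ and elliptic regularity) with the $h^{-1/2}$-weighted factor on $\boldsymbol{e}$ already present in the triple norm, giving
$$b(\boldsymbol{\phi},\boldsymbol{e})\lesssim h^{\frac12}\|\boldsymbol{\phi}\|_{H^{2}(\Omega)}\,\vertiii{\boldsymbol{e}}\lesssim h^{k+\frac12}|\boldsymbol{u}|_{H^{k+1}(\Omega)}\|\boldsymbol{e}\|_{\Omega}.$$
Adding the two contributions and dividing by $\|\boldsymbol{e}\|_{\Omega}$ yields the claim, with $C_{\mu\lambda}'$ absorbing the $\mu,\lambda$ dependence of the elliptic-regularity constant and of $C_{\mu\lambda}$ from Proposition \ref{bounderror}. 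The main delicate point is this last step: because $\boldsymbol{e}$ does not vanish on $\partial\Omega$, only the $h^{-1/2}$-weighted boundary control in $\vertiii{\boldsymbol{e}}$ is available, forcing the balancing $h^{1/2}$ on $\boldsymbol{\phi}$ and producing the suboptimal rate $h^{k+1/2}$.
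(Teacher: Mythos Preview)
Your argument is correct and follows the same Aubin--Nitsche route as the paper: dual problem, error representation $\|\boldsymbol{e}\|_\Omega^2=A_h(\boldsymbol{e},\boldsymbol{\phi})-2b(\boldsymbol{\phi},\boldsymbol{e})$, Galerkin orthogonality on the first term, and direct estimation of the nonconsistent boundary term to produce the $h^{1/2}$ loss. One small simplification the paper makes and you do not: since the Scott--Zhang interpolant preserves homogeneous boundary data, $(\boldsymbol{\phi}-i_{\mathtt{SZ}}\boldsymbol{\phi})|_{\partial\Omega}=0$, so in $A_h(\boldsymbol{e},\boldsymbol{\phi}-i_{\mathtt{SZ}}\boldsymbol{\phi})$ the term $b(\boldsymbol{e},\boldsymbol{\phi}-i_{\mathtt{SZ}}\boldsymbol{\phi})$ vanishes and the bound needs only $\vertiii{\boldsymbol{e}}$, already available from Proposition~\ref{bounderror}, rather than the stronger $\|\boldsymbol{e}\|_*$ that you establish via the extra triangle-inequality/discrete-trace detour.
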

\begin{proof}
Let $\boldsymbol{z}$ satisfy the adjoint problem
\begin{eqnarray*}
-2\mu\nabla \cdot \boldsymbol{\varepsilon}(\boldsymbol{z})-\lambda\nabla\left( \nabla \cdot \boldsymbol{z}\right)&=& \boldsymbol{u}-\boldsymbol{u}_h ~~~~\mbox{ in } \Omega \nonumber, \\
\boldsymbol{z} &=& 0  ~~~~~~~~~~~~\mbox{ on } \partial\Omega.
\end{eqnarray*}
Then we can write
\begin{eqnarray*}
\left\|\boldsymbol{u}-\boldsymbol{u}_h\right\|_{\Omega}^2
&=&\left(\boldsymbol{u}-\boldsymbol{u}_h,-2\mu\nabla \cdot \boldsymbol{\varepsilon}(\boldsymbol{z})-\lambda\nabla\left( \nabla \cdot \boldsymbol{z}\right)\right)_{\Omega}\\
&=&\left(2\mu\boldsymbol{\varepsilon}(\boldsymbol{u}-\boldsymbol{u}_h),\boldsymbol{\varepsilon}(\boldsymbol{z})\right)_{\Omega}+\left(\lambda\nabla\cdot(\boldsymbol{u}-\boldsymbol{u}_h),\nabla\cdot\boldsymbol{z}\right)_{\Omega}\\
&&-\left\langle 2\mu(\boldsymbol{u}-\boldsymbol{u}_h),\boldsymbol{\varepsilon}(\boldsymbol{z})\cdot\boldsymbol{n}\right\rangle_{\partial \Omega}-\left\langle\lambda(\boldsymbol{u}-\boldsymbol{u}_h)\cdot\boldsymbol{n},\nabla\cdot\boldsymbol{z}\right\rangle_{\partial \Omega}\\
&=&A_h\left(\boldsymbol{u}-\boldsymbol{u}_h,\boldsymbol{z}\right)-2\left\langle 2\mu(\boldsymbol{u}-\boldsymbol{u}_h),\boldsymbol{\varepsilon}(\boldsymbol{z})\cdot\boldsymbol{n}\right\rangle_{\partial \Omega}-2\left\langle\lambda(\boldsymbol{u}-\boldsymbol{u}_h)\cdot\boldsymbol{n},\nabla\cdot\boldsymbol{z}\right\rangle_{\partial \Omega}.
\end{eqnarray*}
By Lemma $\ref{galerkin}$, using $(\boldsymbol{z}-i_{\mathtt{SZ}}^1\boldsymbol{z})|_{\partial\Omega}\equiv 0$ and similar arguments as in the proof of Lemma \ref{triplestar} we deduce that
\begin{eqnarray}
A_h(\boldsymbol{u}-\boldsymbol{u}_h,\boldsymbol{z})
&=&A_h\left(\boldsymbol{u}-\boldsymbol{u}_h,\boldsymbol{z}-i_{\mathtt{SZ}}^1\boldsymbol{z}\right)\nonumber\\
&=&\left(2\mu\boldsymbol{\varepsilon}(\boldsymbol{u}-\boldsymbol{u}_h),\boldsymbol{\varepsilon}(\boldsymbol{z}-i_{\mathtt{SZ}}^1\boldsymbol{z})\right)_{\Omega}+\left(\lambda\nabla\cdot(\boldsymbol{u}-\boldsymbol{u}_h),\nabla\cdot(\boldsymbol{z}-i_{\mathtt{SZ}}^1\boldsymbol{z})\right)_{\Omega}\nonumber\\
&&+\left\langle 2\mu(\boldsymbol{u}-\boldsymbol{u}_h),\boldsymbol{\varepsilon}(\boldsymbol{z}-i_{\mathtt{SZ}}^1\boldsymbol{z})\cdot\boldsymbol{n}\right\rangle_{\partial \Omega}+\left\langle\lambda(\boldsymbol{u}-\boldsymbol{u}_h)\cdot\boldsymbol{n},\nabla\cdot(\boldsymbol{z}-i_{\mathtt{SZ}}^1\boldsymbol{z})\right\rangle_{\partial \Omega}\nonumber\\
&\lesssim&\vertiii{\boldsymbol{u}-\boldsymbol{u}_h}\left\|\boldsymbol{z}-i_{\mathtt{SZ}}^1\boldsymbol{z}\right\|_*\nonumber\\
&\lesssim&\left(\lambda^\frac12+\mu^{\frac12}\right)h\vertiii{\boldsymbol{u}-\boldsymbol{u}_h} \left|\boldsymbol{z}\right|_{H^2(\Omega)}.
\label{L2proofeq1}
\end{eqnarray}
The global trace inequalities $\left\|\boldsymbol{\varepsilon}(\boldsymbol{z})\cdot\boldsymbol{n}\right\|_{\partial\Omega}\lesssim\left\|\boldsymbol{z}\right\|_{H^2(\Omega)}$ and $\left\|\nabla\cdot\boldsymbol{z}\right\|_{\partial\Omega}\lesssim\left\|\boldsymbol{z}\right\|_{H^2(\Omega)},$
lead to
\begin{equation}
\left|\left\langle 2\mu(\boldsymbol{u}-\boldsymbol{u}_h),\boldsymbol{\varepsilon}(\boldsymbol{z})\cdot\boldsymbol{n}\right\rangle_{\partial \Omega}\right|+\left|\left\langle\lambda(\boldsymbol{u}-\boldsymbol{u}_h)\cdot\boldsymbol{n},\nabla\cdot\boldsymbol{z}\right\rangle_{\partial \Omega}\right|\lesssim \left(\lambda^\frac12+\mu^{\frac12}\right)h^{\frac12}\vertiii{\boldsymbol{u}-\boldsymbol{u}_h}\left\|\boldsymbol{z}\right\|_{H^2(\Omega)}.
\label{L2proofeq2}
\end{equation}
Using inequalities (\ref{L2proofeq1}) and (\ref{L2proofeq2}) we obtain
$$\left\|\boldsymbol{u}-\boldsymbol{u}_h\right\|_{\Omega}^2\lesssim C_{\mu\lambda}\left(\lambda^\frac12+\mu^{\frac12}\right)\left(h+h^{\frac12}\right)h^k\left|\boldsymbol{u}\right|_{H^{k+1(\Omega)}}\left\|\boldsymbol{z}\right\|_{H^2(\Omega)}.$$
We conclude applying the regularity estimate $\left\|\boldsymbol{z}\right\|_{H^2(\Omega)}\lesssim\left\|\boldsymbol{u}-\boldsymbol{u}_h\right\|_{\Omega}$. ${\rm O}\left(C_{\mu\lambda}'\right)={\rm O}\left(C_{\mu\lambda}\left(\lambda^\frac12+\mu^{\frac12}\right)\right)$.
\end{proof}

\section{Incompressible elasticity}
In this part we consider the problem (\ref{stokes}) and we prove the stability for this configuration similarly as in the previous part for the compressible case. For incompressible elasticity we have to manage one more unknown, the pressure. We choose to work with equal order interpolation for the velocity and the pressure and add a pressure stabilization to recover stability. Note that in this part we re-define the bilinear forms, the triple norm and the star norm. We have the following weak formulation: find $\left(\boldsymbol{u},p\right)\in V_g\times Q$ such that
$$a\left[\left(\boldsymbol{u},p\right),\left(\boldsymbol{v},q\right)\right]=\left(\boldsymbol{f},\boldsymbol{v}\right)_\Omega~~~~~~~~~\forall \left(\boldsymbol{v},q\right)\in V_0\times Q,$$
with
$$a\left[\left(\boldsymbol{u},p\right),\left(\boldsymbol{v},q\right)\right]=\left(2\mu\boldsymbol{\varepsilon}(\boldsymbol{u}),\boldsymbol{\varepsilon}(\boldsymbol{v})\right)_\Omega-\left(p,\nabla\cdot\boldsymbol{v}\right)_\Omega+\left(\nabla\cdot\boldsymbol{u},q\right)_\Omega.$$
\subsection{Finite element formulation}
The nonsymmetric Nitsche's method applied to the incompressible elasticity (\ref{stokes}) gives the following variational formulation, find $\boldsymbol{u}_h\in V_h^k$ and $p_h\in Q_h^k$ such that
\begin{equation}
\label{formulationstokes}
A_h\left[\left(\boldsymbol{u}_h,p_h\right),\left(\boldsymbol{v}_h,q_h\right)\right]=L_h\left(\boldsymbol{v}_h,q_h\right)~~~~~~~~~\forall\left(\boldsymbol{v}_h,q_h\right)\in V_h^k\times Q_h^k,
\end{equation}
where the bilinear forms $A_h$ and $L_h$ are defined as
\begin{equation*}
\begin{split}
A_h\left[\left(\boldsymbol{u}_h,p_h\right),\left(\boldsymbol{v}_h,q_h\right)\right]=&~a\left[\left(\boldsymbol{u}_h,p_h\right),\left(\boldsymbol{v}_h,q_h\right)\right]-b\left(\boldsymbol{u}_h,\boldsymbol{v}_h,p_h\right)+b\left(\boldsymbol{v}_h,\boldsymbol{u}_h,q_h\right)+S_h\left(\boldsymbol{u}_h,p_h,q_h\right),\\
L_h\left(\boldsymbol{v}_h,q_h\right)=&~\left(\boldsymbol{f},\boldsymbol{v}_h+\frac{\gamma}{\mu}h^2\nabla q_h\right)_{\Omega}+b\left(\boldsymbol{v}_h,\boldsymbol{g},q_h\right).
\end{split}
\end{equation*}
The bilinear form $b$ is defined as
\begin{eqnarray*}
b\left(\boldsymbol{u}_h,\boldsymbol{v}_h,p_h\right)&=&\left\langle\left(2\mu\boldsymbol{\varepsilon}\left(\boldsymbol{u}_h\right)-p_h\mathbb{I}_{2\times 2}\right)\cdot \boldsymbol{n},\boldsymbol{v}_h\right\rangle_{\partial\Omega}.
\end{eqnarray*}
$S_h$ denotes the stabilization term, we define
\begin{equation*}
S_h\left(\boldsymbol{u}_h,p_h,q_h\right)=\frac{\gamma}{\mu}\sum_{K\in\mathcal{T}_h}\int_K h^2\left(-2\mu\nabla \cdot \boldsymbol{\varepsilon} \left(\boldsymbol{u}_h\right)+\nabla p_h\right)\nabla q_h~\text{d}x,
\end{equation*}
% modify review 1
this term is necessary as we want to use equal order interpolation.
\subsection{Stability}
We proceed similarly as for the compressible case, we first define the triple norm.
\begin{definition}
\label{triplenorm2}
We define the triple norm of $\left(\boldsymbol{w},\varrho\right)\in V\times L^2\left(\Omega\right)$ as
\begin{equation*}
\vertiii{ \left(\boldsymbol{w},\varrho\right)}^2=~
\mu\left(\left\|\nabla \boldsymbol{w}\right\|_{\Omega}^2+\left\|h^{-\frac12}\boldsymbol{w}\right\|_{\partial\Omega}^2\right)+\frac{1}{\mu}\left\|h\nabla \varrho \right\|_{\Omega}^2.
\end{equation*}
\end{definition}
\begin{lemma}
\label{lowerbound2}
For $\boldsymbol{u}_h,\boldsymbol{v}_h\in V_h^k$ with $\boldsymbol{v}_h= \boldsymbol{u}_h+ \boldsymbol{v}_\Gamma$, $\boldsymbol{v}_\Gamma$ defined by equations \eqref{defvgamma} \eqref{defvj}, and $q_h=p_h$, there exists positive constants $\beta_0$ and $h_0$ such that the following inequality holds for $h<h_0$
$$\beta_0\vertiii{ \left(\boldsymbol{u}_h,p_h\right)}^2 \leq A_h\left[\left(\boldsymbol{u}_h,p_h\right),\left(\boldsymbol{v}_h,q_h\right)\right].$$
\end{lemma}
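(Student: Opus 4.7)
The strategy mirrors the proof of Lemma \ref{lowerbound}. Choosing the test pair $(\boldsymbol{v}_h,q_h)=(\boldsymbol{u}_h+\boldsymbol{v}_\Gamma,p_h)$, I split
\begin{equation*}
A_h[(\boldsymbol{u}_h,p_h),(\boldsymbol{v}_h,p_h)]=A_h[(\boldsymbol{u}_h,p_h),(\boldsymbol{u}_h,p_h)]+\sum_{j=1}^{N_P}A_h[(\boldsymbol{u}_h,p_h),(\boldsymbol{v}_j,0)],
\end{equation*}
and bound each piece from below in a way that produces the two ingredients of $\vertiii{(\boldsymbol{u}_h,p_h)}$: a velocity strain/boundary part and the pressure contribution $\tfrac{1}{\mu}\|h\nabla p_h\|_\Omega^2$.

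On the diagonal the antisymmetric boundary terms $\pm b$ and the divergence--pressure terms $\pm(p_h,\nabla\cdot\boldsymbol{u}_h)$ cancel, leaving $2\|\mu^{1/2}\boldsymbol{\varepsilon}(\boldsymbol{u}_h)\|_\Omega^2+S_h(\boldsymbol{u}_h,p_h,p_h)$. Expanding $S_h$ gives $\tfrac{\gamma}{\mu}\|h\nabla p_h\|_\Omega^2$ together with the cross-term $-2\gamma\sum_K\int_K h^2\nabla\cdot\boldsymbol{\varepsilon}(\boldsymbol{u}_h)\,\nabla p_h$, which by the inverse inequality $\|h\nabla\cdot\boldsymbol{\varepsilon}(\boldsymbol{u}_h)\|_K\lesssim\|\nabla\boldsymbol{u}_h\|_K$ and a Young split is absorbed into a small piece of $\|\mu^{1/2}\nabla\boldsymbol{u}_h\|_\Omega^2$ and half of the pressure term, giving a clean lower bound
\begin{equation*}
A_h[(\boldsymbol{u}_h,p_h),(\boldsymbol{u}_h,p_h)]\geq 2\|\mu^{1/2}\boldsymbol{\varepsilon}(\boldsymbol{u}_h)\|_\Omega^2-C\gamma\|\mu^{1/2}\nabla\boldsymbol{u}_h\|_\Omega^2+\tfrac{\gamma}{2\mu}\|h\nabla p_h\|_\Omega^2.
\end{equation*}

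For each patch, $A_h[(\boldsymbol{u}_h,p_h),(\boldsymbol{v}_j,0)]$ decomposes into the velocity-only terms already appearing in Lemma \ref{lowerbound} (bounded by Cauchy--Schwarz, the trace inequality, \eqref{keyineq1}--\eqref{keyineq2} and Lemma \ref{inequality5}, with no need for Lemma \ref{inequality6} since $\lambda$ is absent) plus the two genuinely new pressure terms $-(p_h,\nabla\cdot\boldsymbol{v}_j)_{P_j}$ and $\langle p_h,\boldsymbol{v}_j\cdot\boldsymbol{n}\rangle_{F_j}$. The key observation is that $p_h$ is continuous and $\boldsymbol{v}_j$ vanishes on $\partial P_j\setminus F_j$, so integration by parts collapses these two terms into the single volume contribution $(\nabla p_h,\boldsymbol{v}_j)_{P_j}$. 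Applying Cauchy--Schwarz with weight $\mu^{\pm1/2}$, followed by \eqref{inequality7} and \eqref{keyineq1}--\eqref{keyineq2}, yields
\begin{equation*}
|(\nabla p_h,\boldsymbol{v}_j)_{P_j}|\leq\tfrac{\delta}{\mu}\|h\nabla p_h\|_{P_j}^2+\tfrac{C}{\delta}\bigl(\alpha_1^2\|\mu^{1/2}h^{-1/2}\overline{\boldsymbol{u}}_h^j\cdot\boldsymbol{\tau}\|_{F_j}^2+\alpha_2^2\|\mu^{1/2}h^{-1/2}\overline{\boldsymbol{u}}_h^j\cdot\boldsymbol{n}\|_{F_j}^2\bigr),
\end{equation*}
whose first term is absorbed into the pressure coercivity and whose second term joins those already produced by Lemma \ref{inequality5}.

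Summing over patches, I then invoke \eqref{inequality4} to pass from $\|h^{-1/2}\overline{\boldsymbol{u}}_h^j\|_{F_j}$ to $\|h^{-1/2}\boldsymbol{u}_h\|_{F_j}$ up to a Korn-absorbable interior gradient, and Theorem \ref{korn} to trade $\|\boldsymbol{\varepsilon}(\boldsymbol{u}_h)\|_\Omega^2+\sum_j\|\overline{\boldsymbol{u}}_h^j\|_{F_j}^2$ for $\|\nabla\boldsymbol{u}_h\|_\Omega^2$, precisely as in the compressible proof. The main obstacle is parameter bookkeeping: I must fix $\gamma$, then $\delta,\epsilon,\alpha_1,\alpha_2$, and finally $h_0$ so that (i) the pressure coefficient remains positive after the two absorptions (one from $S_h$, one from $(\nabla p_h,\boldsymbol{v}_j)$), (ii) the $\alpha_i$-boundary coefficients of the form $\alpha_i(1-C\alpha_i/\epsilon)$ are positive after subtracting the $O(h)$ perturbation from \eqref{inequality4}, and (iii) the coefficient of $\|\mu^{1/2}\nabla\boldsymbol{u}_h\|_{P_j}^2$ survives the accumulated $\epsilon,\gamma,\delta$ losses. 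Once these are balanced, collecting everything gives $\beta_0\vertiii{(\boldsymbol{u}_h,p_h)}^2\leq A_h[(\boldsymbol{u}_h,p_h),(\boldsymbol{v}_h,p_h)]$ with $\beta_0=O(1)$ in $h$ and $h_0=O(1)$.
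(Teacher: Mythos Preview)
Your proposal is correct and follows essentially the same route as the paper: the same splitting into diagonal and patch parts, the same integration-by-parts collapse of the two pressure boundary/volume terms into $(\nabla p_h,\boldsymbol{v}_j)_{P_j}$, and the same use of Theorem~\ref{korn} and \eqref{inequality4} at the end. The only cosmetic difference is in the diagonal Young split: the paper absorbs the $S_h$ cross-term into $\|\mu^{1/2}\boldsymbol{\varepsilon}(\boldsymbol{u}_h)\|_\Omega^2$ via a parameter $\epsilon'$, whereas you absorb it into $\|\mu^{1/2}\nabla\boldsymbol{u}_h\|_\Omega^2$; your version still closes but requires carrying that negative gradient term until after Korn has been applied, so the paper's choice makes the bookkeeping slightly cleaner.
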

\begin{proof}
Decomposing the bilinear form,  we can write the following
\begin{equation*}
A_h\left[\left(\boldsymbol{u}_h,p_h\right),\left(\boldsymbol{v}_h,q_h\right)\right]=A_h\left[\left(\boldsymbol{u}_h,p_h\right),\left(\boldsymbol{u}_h,p_h\right)\right]+\sum_{j=1}^{N_p}A_h\left[\left(\boldsymbol{u}_h,p_h\right),\left(\boldsymbol{v}_j,0\right)\right].
\end{equation*}
Using the Cauchy-Schwarz inequality and an inverse inequality we can write
\begin{eqnarray*}
A_h\left[\left(\boldsymbol{u}_h,p_h\right),\left(\boldsymbol{u}_h,p_h\right)\right]
&\geq&2\left\|\mu^{\frac12}\boldsymbol{\varepsilon}\left( \boldsymbol{u}_h\right)\right\|_{\Omega}^2-\frac{\gamma}{\mu}\left\|2h\mu\nabla \cdot \boldsymbol{\varepsilon}\left( \boldsymbol{u}_h\right)\right\|_\Omega \left\|h\nabla p_h\right\|_\Omega+\frac{\gamma}{\mu}\left\|h\nabla p_h\right\|_\Omega^2\\
&\geq& 2\left(1-\epsilon'\right)\left\|\mu^{\frac12}\boldsymbol{\varepsilon}\left( \boldsymbol{u}_h\right)\right\|_{\Omega}^2+\frac\gamma\mu\left(1-\frac{C\gamma}{4\epsilon'}\right)\left\|h\nabla p_h\right\|_\Omega^2.
\end{eqnarray*}
The second part can be written as
\begin{equation*}
A_h\left[\left(\boldsymbol{u}_h,p_h\right),\left(\boldsymbol{v}_j,0\right)\right]=\left(2\mu\boldsymbol{\varepsilon}\left(\boldsymbol{u}_h\right),\boldsymbol{\varepsilon}\left(\boldsymbol{v}_j\right)\right)_{P_j} +\left(\nabla p_h, \boldsymbol{v}_j\right)_{P_j}-\left\langle 2\mu \boldsymbol{\varepsilon}\left(\boldsymbol{u}_h\right) \cdot \boldsymbol{n},\boldsymbol{v}_j \right\rangle_{F_j}+\left\langle 2\mu\boldsymbol{\varepsilon}\left(\boldsymbol{v}_j\right) \cdot \boldsymbol{n},\boldsymbol{u}_h \right\rangle_{F_j}.
\end{equation*}
Term by term we can obtain a lower bound of each term, note that most of the terms have been studied in the compressible case. The lower bound of the only remaining term can be found using the inequalities \eqref{keyineq1} \eqref{keyineq2} and the inequality (\ref{inequality7}), we get
$$\left(\nabla p_h,\boldsymbol{v}_j\right)_{P_j}\geq-\frac{\epsilon}{\mu}\left\|h \nabla p_h\right\|_{P_j}^2-\frac{C\alpha_1^2\mu}{2 \epsilon}\left\|h^{-\frac12}\overline{\boldsymbol{u}}_h^j \cdot \boldsymbol{\tau}\right\|_{F_j}^2-\frac{C \alpha_2^2\mu}{2 \epsilon}\left\|h^{-\frac12}\overline{\boldsymbol{u}}_h^j \cdot \boldsymbol{n}\right\|_{F_j}^2.$$
The full bilinear form gives
\begin{equation*}
\begin{split}
A_h\left[\left(\boldsymbol{u}_h,p_h\right),\left(\boldsymbol{v}_h,q_h\right)\right]
\geq&~2\left(1-\epsilon'\right)\left\|\mu^{\frac12}\boldsymbol{\varepsilon}\left( \boldsymbol{u}_h\right)\right\|_{\Omega}^2+\frac\gamma\mu\left(1-\frac{C\gamma}{4\epsilon'}\right)\left\|h\nabla p_h \right\|_{\Omega}^2\\
&-2\epsilon\sum_{j=1}^{N_p}\left\| \mu^{\frac12} \boldsymbol{\varepsilon}\left(\boldsymbol{u}_h\right)\right\|_{P_j}^2-\frac{\epsilon}{\mu}\sum_{j=1}^{N_p}\left\|  h\nabla p_h\right\|_{P_j}^2-3\epsilon\sum_{j=1}^{N_p}\left\| \mu^{\frac12} \nabla \boldsymbol{u}_h\right\|_{P_j}^2\\
&-\frac{C\alpha_1^2\mu}{2\epsilon}\sum_{j=1}^{N_p}\left\|h^{-\frac12}\overline{\boldsymbol{u}}_h^j \cdot \boldsymbol{\tau}\right\|_{F_j}^2-\frac{C\alpha_2^2\mu}{2\epsilon}\sum_{j=1}^{N_p}\left\|h^{-\frac12}\overline{\boldsymbol{u}}_h^j \cdot \boldsymbol{n}\right\|_{F_j}^2\\
&+\alpha_1\left( 1-\alpha_1\frac{11C}{4\epsilon}\right)\sum_{j=1}^{N_p}\left\|\frac{\mu^{\frac12}}{h^{\frac12}}\overline{\boldsymbol{u}}_h^j \cdot \boldsymbol{\tau}\right\|_{F_j}^2+\alpha_2\left( 2-\alpha_2\frac{15C}{4\epsilon}\right)\sum_{j=1}^{N_p}\left\|\frac{\mu^{\frac12}}{h^{\frac12}}\overline{\boldsymbol{u}}_h^j \cdot \boldsymbol{n}\right\|_{F_j}^2.
\end{split}
\end{equation*}
Similarly as for the compressible case, using the Theorem \ref{korn} and the inequality (\ref{inequality4}) we obtain 
\begin{equation*}
\begin{split}
A_h\left[\left(\boldsymbol{u}_h,p_h\right),\left(\boldsymbol{v}_h,q_h\right)\right]\geq
&~C_a\left\|\mu^{\frac12}\nabla \boldsymbol{u}_h\right\|_{\Omega\backslash P}^2+C_b\left\|h\nabla p_h \right\|_{\Omega\backslash P}^2+\left(C_c-C_e-C_f\right)\sum_{j=1}^{N_p}\left\| \mu^\frac12\nabla\boldsymbol{u}_h\right\|_{P_j}^2\\&+C_d\sum_{j=1}^{N_p}\left\|  h\nabla p_h\right\|_{P_j}^2+C_e\sum_{j=1}^{N_p}\left\|\frac{\mu^{\frac12}}{h^{\frac12}}\boldsymbol{u}_h \cdot \boldsymbol{\tau}\right\|_{F_j}^2+C_f\sum_{j=1}^{N_p}\left\|\frac{\mu^{\frac12}}{h^{\frac12}}\boldsymbol{u}_h \cdot \boldsymbol{n}\right\|_{F_j}^2,
\end{split}
\end{equation*}
with the constants
\begin{eqnarray*}
C_a&=&2C_K\left(1-\epsilon'\right),\\
C_b&=&\frac{\gamma}{\mu}\left(1-\frac{C\gamma}{4\epsilon'}\right),\\
C_c&=&2C_K\left(1-\epsilon'\right)-5\epsilon,\\
C_d&=&\frac{\gamma}{\mu}\left(1-\frac{C\gamma}{4\epsilon'}\right)-\frac{\epsilon}{\mu},\\
C_e&=&\alpha_1\left( 1-\alpha_1\frac{11C}{4\epsilon}\right)-2h\left(1-\epsilon'\right),\\
C_f&=&\alpha_2\left( 2-\alpha_2\frac{15C}{4\epsilon}\right)-2h\left(1-\epsilon'\right).
\end{eqnarray*}
We choose $\epsilon=\frac{\gamma^2}{4}$ and $\epsilon'=\frac{1}{4}$. Taking $\gamma<\frac{1}{C+\frac{1}{4}}$, for $h$ small enough $C_e$ and $C_f$ will be positive respectively for
$$\frac{\gamma^2}{11C}>\alpha_1~~~,~~~~~~~\frac{2\gamma^2}{15C}>\alpha_2.$$
$C_c-C_e-C_f$ will be positive for
$$\sqrt{\frac{2C_K}{5}}>\gamma~~~,~~~~~~~\frac{C_K}{2}>\alpha_1~~~,~~~~~~~\frac{C_K}{4}>\alpha_2.$$
$h_0$ is the biggest value of $h$ that can be considered, we observe that ${\rm O}\left(\beta_0\right)={\rm O}\left(1\right)$, ${\rm O}\left(h_0\right)={\rm O}\left(1\right)$.
\end{proof}

We remark that contrary to the case of compressible elasticity we see that the conditions on the constants are independent of the physical parameters, this reflects that the mixed method is locking free.
\begin{theorem}
\label{infsup2}
There exists positive constants $\beta$ and $h_0$ such that for all functions $\left(\boldsymbol{u}_h,p_h\right)\in V_h^k\times Q_h^k$ and for $h<h_0$, the following inequality holds
\begin{equation*}
\beta\vertiii{ \left(\boldsymbol{u}_h,p_h\right)}\leq\underset{\left(\boldsymbol{v}_h,q_h\right)\in V_h^k\times Q_h^k}{\textup{sup}} \frac{A_h\left[\left(\boldsymbol{u}_h,p_h\right),\left(\boldsymbol{v}_h,q_h\right)\right]}{\vertiii{ \left(\boldsymbol{v}_h,q_h\right)}}.
\end{equation*}
\end{theorem}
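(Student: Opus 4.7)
The plan is to follow the same two-step template used in Theorem \ref{infsup}: first invoke Lemma \ref{lowerbound2} to obtain a coercivity-type lower bound for a particular test function, then establish an upper bound that allows the constructed test pair to be absorbed into the supremum. Concretely, for a given $(\boldsymbol{u}_h, p_h) \in V_h^k \times Q_h^k$ I choose $\boldsymbol{v}_h := \boldsymbol{u}_h + \boldsymbol{v}_\Gamma$ with $\boldsymbol{v}_\Gamma$ as in \eqref{defvgamma}--\eqref{defvj}, and $q_h := p_h$. Lemma \ref{lowerbound2} then yields
$$\beta_0 \vertiii{(\boldsymbol{u}_h, p_h)}^2 \leq A_h\bigl[(\boldsymbol{u}_h, p_h),(\boldsymbol{v}_h, q_h)\bigr],$$
so it only remains to show that $\vertiii{(\boldsymbol{v}_h, q_h)} \lesssim \vertiii{(\boldsymbol{u}_h, p_h)}$; dividing by $\vertiii{(\boldsymbol{v}_h, q_h)}$ and passing to the supremum on the right-hand side then delivers the claim with $\beta \simeq \beta_0$.

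For the upper bound, the triangle inequality and the choice $q_h = p_h$ give
$$\vertiii{(\boldsymbol{v}_h, q_h)} \leq \vertiii{(\boldsymbol{u}_h, p_h)} + \vertiii{(\boldsymbol{v}_\Gamma, 0)},$$
so the task reduces to controlling $\vertiii{(\boldsymbol{v}_\Gamma, 0)}$ by $\vertiii{(\boldsymbol{u}_h, p_h)}$. By Definition \ref{triplenorm2} the pressure contribution in $\vertiii{(\boldsymbol{v}_\Gamma, 0)}$ is absent, so only the $\mu$-weighted velocity quantities $\|\mu^{1/2}\nabla \boldsymbol{v}_\Gamma\|_\Omega$ and $\|\mu^{1/2} h^{-1/2} \boldsymbol{v}_\Gamma\|_{\partial\Omega}$ need to be estimated. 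These are handled exactly as in Theorem \ref{infsup}: the key inequalities \eqref{keyineq1}, \eqref{keyineq2} together with the isometry of the rotation $\mathcal{R}$ bound $\|\nabla \boldsymbol{v}_\Gamma\|_{P_j}$ by $\|h^{-1/2}\overline{\boldsymbol{u}}_h^j\|_{F_j}$, and then $\|\overline{\boldsymbol{u}}_h^j\|_{F_j} \leq \|\boldsymbol{u}_h\|_{F_j}$ by the $L^2$-stability of the $P_0$-projection, so that
$$\|\mu^{1/2}\nabla \boldsymbol{v}_\Gamma\|_\Omega \lesssim \sum_j \bigl\|\tfrac{\mu^{1/2}}{h^{1/2}}\boldsymbol{u}_h\bigr\|_{F_j} \lesssim \vertiii{(\boldsymbol{u}_h, p_h)}.$$
The boundary term is controlled by combining Lemma \ref{trace}, the inequality \eqref{inequality7}, and the interior estimate just obtained. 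This is essentially \eqref{ref1}--\eqref{ref2} transcribed to the incompressible setting with $\lambda$ dropped.

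The only place where care is required, and the mild obstacle in the argument, is making sure that the pressure plays no uncontrolled role in the upper bound: because $\boldsymbol{v}_\Gamma$ was constructed without any reference to $p_h$ and enters $A_h$ only through velocity-type terms, the pressure stabilization term $S_h$ and the pressure trace in $b$ generate no new contribution to $\vertiii{(\boldsymbol{v}_\Gamma,0)}$, and the pressure part of the test pair is absorbed trivially by $\vertiii{(\boldsymbol{u}_h,p_h)}$ via $q_h = p_h$. Once this observation is in place the proof collapses to the same manipulations as Theorem \ref{infsup}, and the same order of magnitude argument gives ${\rm O}(\beta) = {\rm O}(1)$, consistent with the locking-free nature of the mixed formulation already remarked upon after Lemma \ref{lowerbound2}.
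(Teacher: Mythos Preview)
Your proposal is correct and follows essentially the same route as the paper: invoke Lemma~\ref{lowerbound2} for the specific test pair $(\boldsymbol{v}_h,q_h)=(\boldsymbol{u}_h+\boldsymbol{v}_\Gamma,p_h)$, then use the triangle inequality and the velocity-only estimates \eqref{ref1}--\eqref{ref2} from Theorem~\ref{infsup} to bound $\vertiii{(\boldsymbol{v}_\Gamma,0)}$. Your added remark that the pressure never enters $\vertiii{(\boldsymbol{v}_\Gamma,0)}$ makes explicit what the paper leaves implicit, and the conclusion ${\rm O}(\beta)={\rm O}(1)$ matches the paper's.
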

\begin{proof}
Considering Lemma \ref{lowerbound2}, the only thing that we need to show is
\begin{equation*}
\vertiii{ \left(\boldsymbol{v}_h,q_h\right)} \lesssim \vertiii{ \left(\boldsymbol{u}_h,p_h\right)}.
\end{equation*}
Using the definition of the test functions, the triangle inequality gives
$$\vertiii{ \left(\boldsymbol{v}_h,q_h\right)} \leq \vertiii{ \left(\boldsymbol{u}_h,p_h\right)}+\vertiii{ \left(\boldsymbol{v}_\Gamma,0\right)}.$$
The triple norm of $(\boldsymbol{v}_\Gamma,0)$ is
\begin{equation*}
\vertiii{ \left(\boldsymbol{v}_\Gamma,0\right)}^2=~
\mu\left(\left\|\nabla \boldsymbol{v}_\Gamma\right\|_{\Omega}^2+\left\|h^{-\frac12}\boldsymbol{v}_\Gamma\right\|_{\partial\Omega}^2\right).
\end{equation*}
The claim follows from equations (\ref{ref1}, \ref{ref2}) of Theorem \ref{infsup}.
Note that ${\rm O}\left(\beta\right)={\rm O}\left(1\right)$.
\end{proof}

\subsection{A priori error estimate}
The stability proven in the previous section leads to the study of the error estimate in the triple norm, the Galerkin orthogonality is characterized by the following consistency relation.
\begin{lemma}
\label{galerkin2}
If $\left(\boldsymbol{u},p\right)\in \left[H^{2}\left(\Omega\right)\right]^2\times H^1\left(\Omega\right)$ is the solution of (\ref{stokes}) and $\left(\boldsymbol{u}_h,p_h\right)\in V_h^k\times Q_h^k$ the solution of (\ref{formulationstokes}) the the following property holds
$$A_h\left[\left(\boldsymbol{u}-\boldsymbol{u}_h,p-p_h\right),\left(\boldsymbol{v}_h,q_h\right)\right]=0.$$
\end{lemma}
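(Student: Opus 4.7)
The plan is to mimic the proof of Lemma \ref{galerkin}: verify the strong consistency identity $A_h[(\boldsymbol{u},p),(\boldsymbol{v}_h,q_h)] = L_h(\boldsymbol{v}_h,q_h)$ for every $(\boldsymbol{v}_h,q_h)\in V_h^k\times Q_h^k$, and then subtract the discrete equation \eqref{formulationstokes}. The regularity $\boldsymbol{u}\in [H^2(\Omega)]^2$, $p\in H^1(\Omega)$ is enough to make every boundary and stabilization integral well defined.

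First I would integrate by parts in the continuous bilinear form: elementwise
\begin{equation*}
\bigl(2\mu\boldsymbol{\varepsilon}(\boldsymbol{u}),\boldsymbol{\varepsilon}(\boldsymbol{v}_h)\bigr)_\Omega-(p,\nabla\cdot\boldsymbol{v}_h)_\Omega = \bigl(-2\mu\nabla\cdot\boldsymbol{\varepsilon}(\boldsymbol{u})+\nabla p,\boldsymbol{v}_h\bigr)_\Omega+\bigl\langle (2\mu\boldsymbol{\varepsilon}(\boldsymbol{u})-p\mathbb{I}_{2\times 2})\cdot\boldsymbol{n},\boldsymbol{v}_h\bigr\rangle_{\partial\Omega},
\end{equation*}
where the jump contributions across interior faces vanish since $\boldsymbol{v}_h\in V\subset [H^1(\Omega)]^2$. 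The momentum equation in \eqref{stokes} (read, consistently with the weak form, as $-2\mu\nabla\cdot\boldsymbol{\varepsilon}(\boldsymbol{u})+\nabla p=\boldsymbol{f}$) reduces the volume integral to $(\boldsymbol{f},\boldsymbol{v}_h)_\Omega$, and the boundary term is precisely $b(\boldsymbol{u},\boldsymbol{v}_h,p)$, which cancels $-b(\boldsymbol{u},\boldsymbol{v}_h,p)$ in $A_h$.

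Next I would handle the remaining pieces. The divergence constraint $\nabla\cdot\boldsymbol{u}=0$ kills $(\nabla\cdot\boldsymbol{u},q_h)_\Omega$. The boundary condition $\boldsymbol{u}|_{\partial\Omega}=\boldsymbol{g}$ yields $b(\boldsymbol{v}_h,\boldsymbol{u},q_h)=b(\boldsymbol{v}_h,\boldsymbol{g},q_h)$, matching the corresponding term in $L_h$. Finally, for the stabilization,
\begin{equation*}
S_h(\boldsymbol{u},p,q_h)=\frac{\gamma}{\mu}\sum_{K\in\mathcal{T}_h}\int_K h^2\bigl(-2\mu\nabla\cdot\boldsymbol{\varepsilon}(\boldsymbol{u})+\nabla p\bigr)\nabla q_h\,\textup{d}x=\Bigl(\boldsymbol{f},\frac{\gamma}{\mu}h^2\nabla q_h\Bigr)_\Omega,
\end{equation*}
again by the strong momentum equation applied pointwise (which makes sense because $\boldsymbol{u}\in [H^2]^2$ and $p\in H^1$), and this matches the remaining contribution of $L_h$.

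Combining these identities gives $A_h[(\boldsymbol{u},p),(\boldsymbol{v}_h,q_h)]=L_h(\boldsymbol{v}_h,q_h)$. Since $(\boldsymbol{u}_h,p_h)$ solves \eqref{formulationstokes}, subtraction yields the stated Galerkin orthogonality. The only real subtlety is being careful with the sign convention for $p$ in the momentum equation so that $S_h$ evaluated on the continuous solution produces exactly the $(\boldsymbol{f},\frac{\gamma}{\mu}h^2\nabla q_h)_\Omega$ term of $L_h$; everything else is bookkeeping analogous to Lemma \ref{galerkin}.
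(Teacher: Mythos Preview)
Your proof is correct and follows exactly the approach the paper intends: the paper gives no explicit proof of Lemma~\ref{galerkin2}, leaving it as the obvious analogue of Lemma~\ref{galerkin}, and your argument supplies precisely those details---integration by parts, use of the strong momentum equation (with the sign convention consistent with the weak form, $b$, and $S_h$), the divergence constraint, the boundary condition, and the residual form of the stabilization. The care you take with the sign of $p$ is warranted, since the definition of $\boldsymbol{\sigma}(\boldsymbol{u},p)$ in \eqref{stokes} is inconsistent with the rest of the formulation, but everything downstream agrees with $-2\mu\nabla\cdot\boldsymbol{\varepsilon}(\boldsymbol{u})+\nabla p=\boldsymbol{f}$ as you use it.
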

The star norm of $\left( \boldsymbol{w},\varrho\right)$ used for the continuity of $A_h\left[\left(\cdot,\cdot\right),\left(\cdot,\cdot\right)\right]$ is defined as
\begin{equation*}
\begin{split}
\left\|\left( \boldsymbol{w},\varrho\right) \right\|_*:=\vertiii{ \left(  \boldsymbol{w},\varrho\right) }+\left\|\mu^{\frac12}h^{\frac12}\nabla \boldsymbol{w} \right\|_{\partial\Omega}+\left\| \varrho\right\|_{\Omega}&+\left\| h^{\frac12}\varrho\right\|_{\partial\Omega}+\left\|h^{-1}\boldsymbol{w}\right\|_{\Omega}\\
&+\left(\sum_{K\in\mathcal{T}_h}\left\|h\mu^{\frac12}\nabla\cdot\boldsymbol{\varepsilon}\left(\boldsymbol{w}\right)\right\|_{K}^2\right)^\frac12.
\end{split}
\end{equation*}
\begin{lemma}
\label{triplestar2}
Let $\left(\boldsymbol{w},\varrho\right)\in\left(\left[H^{2}\left(\Omega\right)\right]^2+V_h^{k}\right)\times\left(H^1\left(\Omega\right)+Q_h^k\right)$ and $\left(\boldsymbol{v}_h,q_h\right)\in V_h^k\times Q_h^k$ there exists a positive constant $M$ such that the bilinear form $A_h\left[\left(\cdot,\cdot\right),\left(\cdot,\cdot\right)\right]$ has the property
$$A_h\left[\left(\boldsymbol{w},\varrho\right),\left(\boldsymbol{v}_h,q_h\right)\right]\leq M\left\|\left( \boldsymbol{w},\varrho\right) \right\|_*\vertiii{ \left(\boldsymbol{v}_h,q_h\right)}.$$
\end{lemma}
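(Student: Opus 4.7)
The plan is to expand $A_h[(\boldsymbol{w},\varrho),(\boldsymbol{v}_h,q_h)]$ into its Galerkin, Nitsche and stabilization components and to bound each piece by $\|(\boldsymbol{w},\varrho)\|_*\vertiii{(\boldsymbol{v}_h,q_h)}$ via Cauchy--Schwarz together with the trace inequality (Lemma \ref{trace}) for the continuous factors and a combined trace/inverse estimate for the discrete factor $\boldsymbol{v}_h$.

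Most of the contributions follow by direct pairing of norm components. The interior terms $(2\mu\boldsymbol{\varepsilon}(\boldsymbol{w}),\boldsymbol{\varepsilon}(\boldsymbol{v}_h))_\Omega$ and $(\varrho,\nabla\cdot\boldsymbol{v}_h)_\Omega$ match $\|\mu^{1/2}\nabla\boldsymbol{w}\|_\Omega$ and $\|\varrho\|_\Omega$ from the star norm against $\|\mu^{1/2}\nabla\boldsymbol{v}_h\|_\Omega$ in the triple norm. The Nitsche consistency fluxes $\langle 2\mu\boldsymbol{\varepsilon}(\boldsymbol{w})\cdot\boldsymbol{n},\boldsymbol{v}_h\rangle_{\partial\Omega}$ and $\langle\varrho,\boldsymbol{v}_h\cdot\boldsymbol{n}\rangle_{\partial\Omega}$ use the boundary contributions $\|\mu^{1/2}h^{1/2}\nabla\boldsymbol{w}\|_{\partial\Omega}$ and $\|h^{1/2}\varrho\|_{\partial\Omega}$ of $\|\cdot\|_*$ paired with $\|h^{-1/2}\boldsymbol{v}_h\|_{\partial\Omega}$. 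For the antisymmetric strain flux $\langle 2\mu\boldsymbol{\varepsilon}(\boldsymbol{v}_h)\cdot\boldsymbol{n},\boldsymbol{w}\rangle_{\partial\Omega}$ I would apply trace plus inverse to the finite element factor, $\|h^{1/2}\boldsymbol{\varepsilon}(\boldsymbol{v}_h)\|_{\partial\Omega}\lesssim\|\nabla\boldsymbol{v}_h\|_\Omega$, before Cauchy--Schwarz. The two stabilization pieces are treated elementwise: the pressure-pressure piece pairs $\mu^{-1/2}\|h\nabla\varrho\|_\Omega$ with $\mu^{-1/2}\|h\nabla q_h\|_\Omega$, and the momentum-residual piece pairs the summand $(\sum_K\|h\mu^{1/2}\nabla\cdot\boldsymbol{\varepsilon}(\boldsymbol{w})\|_K^2)^{1/2}$ of $\|\cdot\|_*$ against the same triple-norm quantity.

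The step that requires care, and the main obstacle of the proof, is the pair consisting of the constraint term $(\nabla\cdot\boldsymbol{w},q_h)_\Omega$ and the antisymmetric pressure flux $-\langle q_h,\boldsymbol{w}\cdot\boldsymbol{n}\rangle_{\partial\Omega}$ coming from $b(\boldsymbol{v}_h,\boldsymbol{w},q_h)$: neither $\|q_h\|_\Omega$ nor $\|h^{1/2}q_h\|_{\partial\Omega}$ is controlled by $\vertiii{(\boldsymbol{v}_h,q_h)}$, so a direct Cauchy--Schwarz does not close. The idea is to integrate by parts,
\begin{equation*}
(\nabla\cdot\boldsymbol{w},q_h)_\Omega = -(\boldsymbol{w},\nabla q_h)_\Omega + \langle\boldsymbol{w}\cdot\boldsymbol{n},q_h\rangle_{\partial\Omega},
\end{equation*}
and to observe that the newly produced boundary term cancels exactly with $-\langle q_h,\boldsymbol{w}\cdot\boldsymbol{n}\rangle_{\partial\Omega}$. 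The residual $-(\boldsymbol{w},\nabla q_h)_\Omega$ is then bounded by $\|h^{-1}\boldsymbol{w}\|_\Omega\cdot\|h\nabla q_h\|_\Omega$, which is precisely why the otherwise anomalous weight $\|h^{-1}\boldsymbol{w}\|_\Omega$ appears in the definition of $\|\cdot\|_*$. Summing the individual bounds and absorbing the $\mu$- and $\gamma$-dependent constants into $M$ finishes the proof.
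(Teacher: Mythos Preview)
Your proof is correct and follows essentially the same route as the paper. In particular, the ``main obstacle'' you identify --- that $(\nabla\cdot\boldsymbol{w},q_h)_\Omega$ and $-\langle q_h,\boldsymbol{w}\cdot\boldsymbol{n}\rangle_{\partial\Omega}$ cannot be estimated termwise because $\vertiii{(\boldsymbol{v}_h,q_h)}$ controls neither $\|q_h\|_\Omega$ nor $\|h^{1/2}q_h\|_{\partial\Omega}$, so one must integrate by parts and exploit the boundary cancellation to arrive at $-(\boldsymbol{w},\nabla q_h)_\Omega$ --- is precisely the step the paper singles out, and your explanation of why this forces the term $\|h^{-1}\boldsymbol{w}\|_\Omega$ into the star norm is clearer than the paper's own write-up.
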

\begin{proof}
The proof of the Lemma \ref{triplestar} gives us the desired upper bound for most of the terms. The integration by parts gives
$$\left(\nabla \varrho, \boldsymbol{v}_h\right)_{\Omega}=\left\langle \varrho\cdot  \boldsymbol{n}, \boldsymbol{v}_h\right\rangle_{\partial\Omega}-\left(\varrho,\nabla \cdot  \boldsymbol{v}_h\right)_{\Omega}.$$
Using the Cauchy-Schwarz inequality we obtain
\begin{eqnarray*}
\left\langle \varrho\cdot \boldsymbol{n},\boldsymbol{v}_h \right\rangle_{\partial \Omega}-\left(\varrho,\nabla \cdot \boldsymbol{v}_h\right)_{\Omega}-\left(\nabla q_h,\boldsymbol{w}\right)_{\Omega}
&\lesssim&\left\| \left( \boldsymbol{w},\varrho\right)\right\|_*\vertiii{ \left(\boldsymbol{v}_h,q_h\right)},\\
\sum_{K\in\mathcal{T}_h}\left(h^2\left(-2\mu\nabla \cdot \boldsymbol{\varepsilon}\left( \boldsymbol{w}\right)+ \nabla \varrho \right),\nabla q_h\right)_K
&\lesssim&\left\| \left( \boldsymbol{w},\varrho\right)\right\|_*\vertiii{ \left(\boldsymbol{v}_h,q_h\right)}.
\end{eqnarray*}
Note that the second line corresponds to the stabilization term.
\end{proof}

\begin{proposition}
\label{bounderror2}
If $\left(\boldsymbol{u},p\right)\in\left[H^{k+1}\left(\Omega\right)\right]^2\times H^k\left(\Omega\right)$ is the solution of (\ref{stokes}) and $\left(\boldsymbol{u}_h,p_h\right)$ the solution of (\ref{formulationstokes}) with $h<h_0$, then there holds
$$\vertiii{ \left(\boldsymbol{u}-\boldsymbol{u}_h,p-p_h\right)}\leq h^k\left(C_{u\mu}\left|\boldsymbol{u}\right|_{H^{k+1}\left(\Omega\right)}+C_{p\mu}\left|p\right|_{H^k\left(\Omega\right)}\right).$$
where $C_{u\mu}$ and $C_{p\mu}$ are positive constants that depends on $\mu$ and the mesh geometry.
\end{proposition}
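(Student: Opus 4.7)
The plan is to follow the same template as in Proposition \ref{bounderror} for the compressible case, adding the pressure component and the new star-norm contributions arising from the GLS stabilization. Let $i_{\mathtt{SZ}}^k$ denote the vector/scalar Scott-Zhang interpolant and split the error as
\begin{equation*}
(\boldsymbol{u}-\boldsymbol{u}_h,\,p-p_h) \;=\; (\boldsymbol{u}-i_{\mathtt{SZ}}^k\boldsymbol{u},\,p-i_{\mathtt{SZ}}^k p)\;+\;(i_{\mathtt{SZ}}^k\boldsymbol{u}-\boldsymbol{u}_h,\,i_{\mathtt{SZ}}^k p-p_h).
\end{equation*}
The first (interpolation) part will be estimated directly, while the second (discrete) part will be controlled by the inf-sup inequality of Theorem \ref{infsup2}.

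First I would record the interpolation estimates. From the local approximation property of $i_{\mathtt{SZ}}^k$ together with the trace inequality of Lemma \ref{trace}, I would check that each summand appearing in the triple norm of Definition \ref{triplenorm2} and in the star norm of Lemma \ref{triplestar2} is of order $h^k$. Concretely this gives
\begin{equation*}
\vertiii{(\boldsymbol{u}-i_{\mathtt{SZ}}^k\boldsymbol{u},\,p-i_{\mathtt{SZ}}^k p)} \;\lesssim\; h^{k}\bigl(\mu^{\tfrac12}|\boldsymbol{u}|_{H^{k+1}(\Omega)}+\mu^{-\tfrac12}|p|_{H^{k}(\Omega)}\bigr),
\end{equation*}
and similarly $\|(\boldsymbol{u}-i_{\mathtt{SZ}}^k\boldsymbol{u},\,p-i_{\mathtt{SZ}}^k p)\|_{*}\lesssim h^{k}\bigl(\mu^{\tfrac12}|\boldsymbol{u}|_{H^{k+1}(\Omega)}+\mu^{-\tfrac12}|p|_{H^{k}(\Omega)}\bigr)$, where the new star-norm terms $\|h^{\tfrac12}(p-i_{\mathtt{SZ}}^k p)\|_{\partial\Omega}$, $\|h^{-1}(\boldsymbol{u}-i_{\mathtt{SZ}}^k\boldsymbol{u})\|_{\Omega}$ and $(\sum_K\|h\mu^{\tfrac12}\nabla\cdot\boldsymbol{\varepsilon}(\boldsymbol{u}-i_{\mathtt{SZ}}^k\boldsymbol{u})\|_K^2)^{\tfrac12}$ are seen to be of order $h^k$ by the standard $h$-scaling (the last one requires $\boldsymbol{u}\in[H^{k+1}(\Omega)]^2$ and $k\geq 1$).

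Next I would combine Theorem \ref{infsup2}, the Galerkin orthogonality of Lemma \ref{galerkin2} and the continuity in the star norm of Lemma \ref{triplestar2} to estimate the discrete part:
\begin{equation*}
\beta\vertiii{(i_{\mathtt{SZ}}^k\boldsymbol{u}-\boldsymbol{u}_h,\,i_{\mathtt{SZ}}^k p-p_h)}
\;\leq\;\sup_{(\boldsymbol{v}_h,q_h)}\frac{A_h[(i_{\mathtt{SZ}}^k\boldsymbol{u}-\boldsymbol{u},\,i_{\mathtt{SZ}}^k p-p),(\boldsymbol{v}_h,q_h)]}{\vertiii{(\boldsymbol{v}_h,q_h)}}
\;\leq\;M\|(i_{\mathtt{SZ}}^k\boldsymbol{u}-\boldsymbol{u},\,i_{\mathtt{SZ}}^k p-p)\|_{*}.
\end{equation*}
A final triangle inequality applied to the splitting above then yields
\begin{equation*}
\vertiii{(\boldsymbol{u}-\boldsymbol{u}_h,\,p-p_h)} \;\leq\; \vertiii{(\boldsymbol{u}-i_{\mathtt{SZ}}^k\boldsymbol{u},\,p-i_{\mathtt{SZ}}^k p)} + \tfrac{M}{\beta}\,\|(\boldsymbol{u}-i_{\mathtt{SZ}}^k\boldsymbol{u},\,p-i_{\mathtt{SZ}}^k p)\|_{*},
\end{equation*}
and inserting the interpolation estimates produces the claimed bound with constants $C_{u\mu},C_{p\mu}=O(\beta^{-1}(\mu^{\tfrac12}+\mu^{-\tfrac12}))$.

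The main obstacle, essentially the only nonroutine point, is verifying that every ingredient of the star norm really is controlled at rate $h^k$ by the Scott-Zhang estimates; in particular the second-derivative term $\|h\mu^{\tfrac12}\nabla\cdot\boldsymbol{\varepsilon}(\boldsymbol{u}-i_{\mathtt{SZ}}^k\boldsymbol{u})\|_K$ requires an elementwise bound on $D^2(\boldsymbol{u}-i_{\mathtt{SZ}}^k\boldsymbol{u})$ and hence the full $H^{k+1}$-regularity of $\boldsymbol{u}$, and the boundary pressure term $\|h^{\tfrac12}(p-i_{\mathtt{SZ}}^k p)\|_{\partial\Omega}$ needs the trace inequality applied to $p\in H^k(\Omega)$ together with the interpolation estimate $\|\nabla(p-i_{\mathtt{SZ}}^k p)\|_K\lesssim h^{k-1}|p|_{H^k(S_K)}$. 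Once these element-by-element checks are carried out, the remainder of the argument is an immediate copy of the compressible proof.
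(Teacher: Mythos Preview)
Your proposal is correct and follows essentially the same route as the paper's proof: split via the Scott--Zhang interpolant, bound both $\vertiii{\cdot}$ and $\|\cdot\|_*$ of the interpolation error by the approximation properties plus trace inequality, then combine Theorem~\ref{infsup2}, Lemma~\ref{galerkin2} and Lemma~\ref{triplestar2} with a triangle inequality. The only minor difference is that the paper tracks the $\mu$-dependence slightly more sharply, obtaining $C_{u\mu}=O(\mu^{1/2})$ and $C_{p\mu}=O(\mu^{-1/2})$ separately rather than your combined $O(\beta^{-1}(\mu^{1/2}+\mu^{-1/2}))$.
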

\begin{proof}
Let $i_{\mathtt{SZ}}^k$ denote the Scott-Zhang interpolant \citep{Scott_1990_a}, the approximation properties for each $K\in\mathcal{T}_h$ gives
\begin{eqnarray*}
\left\|\boldsymbol{u}-i_{\mathtt{SZ}}^k\boldsymbol{u}\right\|_{K}+h_K\left\|\nabla\left(\boldsymbol{u}-i_{\mathtt{SZ}}^k\boldsymbol{u}\right)\right\|_{K}+h_K^2\left\|D^2\left(\boldsymbol{u}-i_{\mathtt{SZ}}^k\boldsymbol{u}\right)\right\|_{K}&\lesssim& h_K^{k+1}\left| \boldsymbol{u} \right|_{H^{k+1}({S_K})},\\
\left\|p-i_{\mathtt{SZ}}^kp\right\|_{K}+h_K\left\|\nabla\left(p-i_{\mathtt{SZ}}^kp\right)\right\|_{K}&\lesssim& h_K\left|p\right|_{H^{k}({S_K})}.
\end{eqnarray*}
Using these properties and the trace inequality, it is straightforward to show that
\begin{eqnarray*}
\vertiii{ \left(\boldsymbol{u}-\boldsymbol{u}_h,p-p_h\right)}&\lesssim& h^k\left(\mu^{\frac12}\left|\boldsymbol{u}\right|_{H^{k+1}\left(\Omega\right)}+\mu^{-\frac12}\left|p\right|_{H^k\left(\Omega\right)}\right),\\
\left\| \left(\boldsymbol{u}-\boldsymbol{u}_h,p-p_h\right)\right\|_*&\lesssim& h^k\left(\mu^{\frac12}\left|\boldsymbol{u}\right|_{H^{k+1}\left(\Omega\right)}+\mu^{-\frac12}\left|p\right|_{H^k\left(\Omega\right)}\right).
\end{eqnarray*}
Using Theorem \ref{infsup2}, Galerkin orthogonality and the Lemma \ref{triplestar2} we obtain
$$\beta\vertiii{ \left(\boldsymbol{u}_h-i_{\mathtt{SZ}}^k\boldsymbol{u},p_h-i_{\mathtt{SZ}}^kp\right)}\leq\frac{A_h\left[\left(\boldsymbol{u}_h-i_{\mathtt{SZ}}^k\boldsymbol{u},p_h-i_{\mathtt{SZ}}^kp\right),\left(\boldsymbol{v}_h,q_h\right)\right]}{\vertiii{ \left(\boldsymbol{v}_h,q_h\right)}}\leq M\left\| \left(\boldsymbol{u}-i_{\mathtt{SZ}}^k\boldsymbol{u},p-i_{\mathtt{\mathtt{SZ}}}^k p\right)\right\|_*.$$
Using this property and the triangle inequality we can write
$$\vertiii{ \left(\boldsymbol{u}-\boldsymbol{u}_h,p-p_h\right)}\leq\vertiii{ \left(\boldsymbol{u}-i_{\mathtt{SZ}}^k\boldsymbol{u},p-i_{\mathtt{SZ}}^kp\right)}+\frac{M}{\beta}\left\| \left(\boldsymbol{u}-i_{\mathtt{SZ}}^k\boldsymbol{u},p-i_{\mathtt{SZ}}^kp\right)\right\|_*.$$
We note that ${\rm O}\left(C_{u\mu}\right)={\rm O}\left(\mu^{\frac12}\right)$ and ${\rm O}\left(C_{p\mu}\right)={\rm O}\left(\mu^{-\frac12}\right)$.
\end{proof}

The convergence of the $L^2$-error of the velocities with the order ${\rm O}\left(h^{k+\frac12}\right)$ may be proven similarly as in Proposition \ref{L2stabelast}.
\begin{proposition}
Let $\left(\boldsymbol{u},p\right)\in \left[H^{k+1}\left(\Omega\right)\right]^2\times H^k\left(\Omega\right)$ be the solution of (\ref{stokes}) and $\left(\boldsymbol{u}_h,p_h\right)\in V_h^k\times Q_h^k$ the solution of (\ref{formulationstokes}) with $h<h_0$, then
$$\left\|p-p_h\right\|_{\Omega}\leq h^k\left(C_{u\mu}'\left|u\right|_{H^{k+1}\left(\Omega\right)}+C_{p\mu}'\left|p\right|_{H^{k}\left(\Omega\right)}\right),$$
where $C_{u\mu}'$ and $C_{p\mu}'$ are positive constants that depends on $\mu$ and the mesh geometry.
\end{proposition}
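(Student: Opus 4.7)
The plan is to use the continuous Babu\v{s}ka--Brezzi inf-sup condition to dualize the $L^2$ pressure norm against a velocity function, then exploit Galerkin orthogonality together with the triple-norm estimate of Proposition~\ref{bounderror2}. The triple norm of Definition~\ref{triplenorm2} only contains $\mu^{-1/2}\|h\nabla(p-p_h)\|_\Omega$ for the pressure component, so a genuine duality argument is needed to recover a bound on $\|p-p_h\|_\Omega$ itself.

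I start by splitting $p - p_h = (p - \pi p) + (\pi p - p_h)$, where $\pi p \in Q_h^k$ is the Scott--Zhang interpolant adjusted by a constant so that it has vanishing mean. The standard interpolation estimate gives $\|p - \pi p\|_\Omega \lesssim h^k |p|_{H^k(\Omega)}$, so it remains to control $\|\pi p - p_h\|_\Omega$. Since $\pi p - p_h \in L^2_0(\Omega)$, the continuous Stokes inf-sup provides $\boldsymbol{w} \in V_0$ with $\nabla \cdot \boldsymbol{w} = \pi p - p_h$ and $\|\boldsymbol{w}\|_{H^1(\Omega)} \lesssim \|\pi p - p_h\|_\Omega$. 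Then
$$\|\pi p - p_h\|_\Omega^2 = (\pi p - p, \nabla \cdot \boldsymbol{w})_\Omega + (p - p_h, \nabla \cdot \boldsymbol{w})_\Omega,$$
the first summand being bounded by $h^k |p|_{H^k} \|\boldsymbol{w}\|_{H^1}$ by Cauchy--Schwarz. Setting $\boldsymbol{v}_h := i_{\mathtt{SZ}}^1 \boldsymbol{w} \in V_h^k$, which vanishes on $\partial\Omega$ since $\boldsymbol{w}$ does, I write the remaining term as $(p - p_h, \nabla \cdot \boldsymbol{v}_h) + (p - p_h, \nabla \cdot (\boldsymbol{w} - \boldsymbol{v}_h))$.

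Lemma~\ref{galerkin2} applied with test function $(\boldsymbol{v}_h, 0)$ gives Galerkin orthogonality; the stabilization term vanishes because $q_h = 0$, and the Nitsche boundary term $b(u-u_h, \boldsymbol{v}_h, p-p_h)$ disappears because $\boldsymbol{v}_h|_{\partial\Omega} = 0$. The identity reduces to
$$(p - p_h, \nabla \cdot \boldsymbol{v}_h) = (2\mu\boldsymbol{\varepsilon}(u-u_h), \boldsymbol{\varepsilon}(\boldsymbol{v}_h))_\Omega + \langle 2\mu\boldsymbol{\varepsilon}(\boldsymbol{v}_h)\cdot\boldsymbol{n}, u-u_h\rangle_{\partial\Omega}.$$
For the second piece I integrate by parts; as both $\boldsymbol{w}$ and $\boldsymbol{v}_h$ vanish on $\partial\Omega$, boundary contributions drop and only $-(\nabla(p-p_h), \boldsymbol{w} - \boldsymbol{v}_h)_\Omega$ remains. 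Each of these three contributions is controlled by $\mu^{1/2}\vertiii{(u-u_h,p-p_h)}\,\|\boldsymbol{w}\|_{H^1}$ using Cauchy--Schwarz combined with the Scott--Zhang stability $\|\nabla \boldsymbol{v}_h\|_\Omega + \|h^{-1}(\boldsymbol{w}-\boldsymbol{v}_h)\|_\Omega \lesssim \|\boldsymbol{w}\|_{H^1}$, and, for the boundary term, the trace inequality of Lemma~\ref{trace} together with the inverse estimate of Lemma~\ref{inverse} to convert $\|h^{1/2}\boldsymbol{\varepsilon}(\boldsymbol{v}_h)\|_{\partial\Omega}$ into $\|\nabla \boldsymbol{v}_h\|_\Omega$.

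Collecting and dividing by $\|\pi p - p_h\|_\Omega$ yields $\|\pi p - p_h\|_\Omega \lesssim \mu^{1/2}\vertiii{(u-u_h,p-p_h)} + h^k |p|_{H^k}$; inserting the triple-norm bound from Proposition~\ref{bounderror2} and applying the triangle inequality with the estimate for $p-\pi p$ delivers the claim, with $C'_{u\mu} = {\rm O}(\mu)$ and $C'_{p\mu} = {\rm O}(1)$. The main obstacle is the Nitsche-type boundary term $\langle 2\mu\boldsymbol{\varepsilon}(\boldsymbol{v}_h)\cdot\boldsymbol{n}, u-u_h\rangle_{\partial\Omega}$: even though $\boldsymbol{v}_h$ vanishes on $\partial\Omega$, its deformation tensor does not, so this term is nontrivial and can only be absorbed by carefully pairing a combined trace-plus-inverse estimate for $\boldsymbol{\varepsilon}(\boldsymbol{v}_h)$ with the Nitsche component $\mu\|h^{-1/2}(u-u_h)\|_{\partial\Omega}^2$ of the triple norm, while also tracking the $\mu$-weights accurately throughout.
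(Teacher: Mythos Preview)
Your proof is correct and follows essentially the same route as the paper: both arguments invoke the surjectivity of the divergence operator onto $L^2_0(\Omega)$, test the Galerkin orthogonality of Lemma~\ref{galerkin2} with $(i_{\mathtt{SZ}}\boldsymbol{w},0)$ (exploiting that the interpolant inherits the zero boundary trace, which kills the consistency term $b(\boldsymbol{u}-\boldsymbol{u}_h,\boldsymbol{v}_h,p-p_h)$ and the stabilization), integrate by parts on the remainder $(p-p_h,\nabla\cdot(\boldsymbol{w}-\boldsymbol{v}_h))$, and control the surviving Nitsche boundary term $\langle 2\mu\boldsymbol{\varepsilon}(\boldsymbol{v}_h)\cdot\boldsymbol{n},\boldsymbol{u}-\boldsymbol{u}_h\rangle_{\partial\Omega}$ via a trace-plus-inverse estimate against the $\mu\|h^{-1/2}(\boldsymbol{u}-\boldsymbol{u}_h)\|_{\partial\Omega}^2$ component of the triple norm. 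The only difference is cosmetic: you first split $p-p_h=(p-\pi p)+(\pi p-p_h)$ and apply the inf-sup to the discrete part, whereas the paper applies it directly to $p-p_h$ (which already lies in $L^2_0(\Omega)$ since $p\in Q$ and $p_h\in Q_h^k\subset Q$), so your initial interpolation step is harmless but not needed.
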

\begin{proof}
By the surjectivity of the divergence operator  $\nabla\cdot:H_0^1\left(\Omega\right)\rightarrow L^2_0\left(\Omega\right)$ \citep[see,][]{Girault_1986_a}, there exists $\boldsymbol{v}_p\in V_0$ such that $\nabla \cdot \boldsymbol{v}_p=p-p_h$.
Therefore we may write (using the Lemma \ref{galerkin2} and observing that $\left(\boldsymbol{v}_p-i_{\mathtt{SZ}}\boldsymbol{v}_p\right)|_{\partial\Omega}=0$)
\begin{eqnarray*}
\left\|p-p_h\right\|_{\Omega}^2
&=&\left(p-p_h,\nabla\cdot\boldsymbol{v}_p\right)+A_h\left[\left(\boldsymbol{u}-\boldsymbol{u}_h,p-p_h\right),\left(i_\mathtt{SZ}\boldsymbol{v}_p,0\right)\right]\\
&=&\left(p-p_h,\nabla\cdot\left(\boldsymbol{v}_p-i_{\mathtt{SZ}}\boldsymbol{v}_p\right)\right)_{\Omega}\\
&&+\left(2\mu\boldsymbol{\varepsilon}\left(\boldsymbol{u}-\boldsymbol{u}_h\right),\boldsymbol{\varepsilon}\left(i_{\mathtt{SZ}} \boldsymbol{v}_p\right)\right)_\Omega+\left\langle 2\mu\boldsymbol{\varepsilon}\left(i_{\mathtt{SZ}} \boldsymbol{v}_p\right)\cdot\boldsymbol{n}, \boldsymbol{u}-\boldsymbol{u}_h\right\rangle_{\partial\Omega}\\
&=&-\left(\nabla\left(p-p_h\right),\boldsymbol{v}_p-i_{\mathtt{SZ}}\boldsymbol{v}_p\right)_{\Omega}\\
&&+\left(2\mu\boldsymbol{\varepsilon}\left(\boldsymbol{u}-\boldsymbol{u}_h\right),\boldsymbol{\varepsilon}\left(i_{\mathtt{SZ}} \boldsymbol{v}_p\right)\right)_\Omega+\left\langle 2\mu\boldsymbol{\varepsilon}\left(i_{\mathtt{SZ}} \boldsymbol{v}_p\right)\cdot\boldsymbol{n}, \boldsymbol{u}-\boldsymbol{u}_h\right\rangle_{\partial\Omega}\\
&\lesssim&\frac{1}{\mu^\frac12}\left\|h\nabla\left(p-p_h\right)\right\|_\Omega h^{-1}\left\|\mu^\frac12\left(\boldsymbol{v}_p-i_{\mathtt{SZ}}\boldsymbol{v}_p\right)\right\|_{\Omega}\\
&&+2\left\|\mu^\frac12\nabla\left(\boldsymbol{u}-\boldsymbol{u}_h\right)\right\|_{\Omega}\left\|\mu^{\frac12}\nabla i_{\mathtt{SZ}}\boldsymbol{v}_p\right\|_{\Omega}+\left\|\mu^{\frac12}\nabla i_{\mathtt{SZ}}\boldsymbol{v}_p\right\|_{\Omega}\left\|\frac{\mu^{\frac12}}{h^\frac12}\left(\boldsymbol{u}-\boldsymbol{u}_h\right)\right\|_{\Omega}\\
&\lesssim&\mu^\frac12\vertiii{\left(\boldsymbol{u}-\boldsymbol{u}_h\right),\left(p-p_h\right)}\left|\boldsymbol{v}_p\right|_{H^1\left(\Omega\right)}.
\end{eqnarray*}
We conclude by applying the stability $\left\|\boldsymbol{v}_p\right\|_{H^1\left(\Omega\right)}\leq C_{\boldsymbol{v}_p}\left\|p-p_h\right\|_\Omega$. We observe that ${\rm O}\left(C_{u\mu}'\right)={\rm O}\left(\mu\right)$ and ${\rm O}\left(C_{p\mu}'\right)={\rm O}\left(1\right)$.
\end{proof}

\section{Numerical results}
In this section we will present some numerical experiments verifying the above theory. The package FreeFem++ \citep{Hecht_2012_a} was used for the numerical study. In the first two sections we consider the domain $\Omega$ as the unit square $\left[0,1\right]\times\left[0,1\right]$. For compressible and incompressible elasticity we use a manufactured solution to test the precision of the method. In the third section we study the performance of the penalty free Nitsche's method for the Cook's membrane problem.

\subsection{Compressible elasticity}
The two dimensional function below is a manufactured solution considered for the tests
\begin{equation*}
\boldsymbol{u}=\begin{pmatrix}\left(x^5-x^4\right)\left(y^3-y^2\right)\\ \left(x^4-x^3\right)\left(y^6-y^5\right)\end{pmatrix}.
\end{equation*}
The nonsymmetric Nitsche's method given by equation (\ref{formulationelasticity}) is used to compute approximations on a series of structured meshes. We consider first and second order polynomials and we study the convergence rates of the error in the $H^1$- and $L^2$-norms. We choose $\mu=1$ and consider several values of $\lambda$ in order to see numerically the locking phenomena for large values of $\lambda$ compared to $\mu$.
\begin{figure}[h!]
\begin{center}
\includegraphics[scale=0.27]{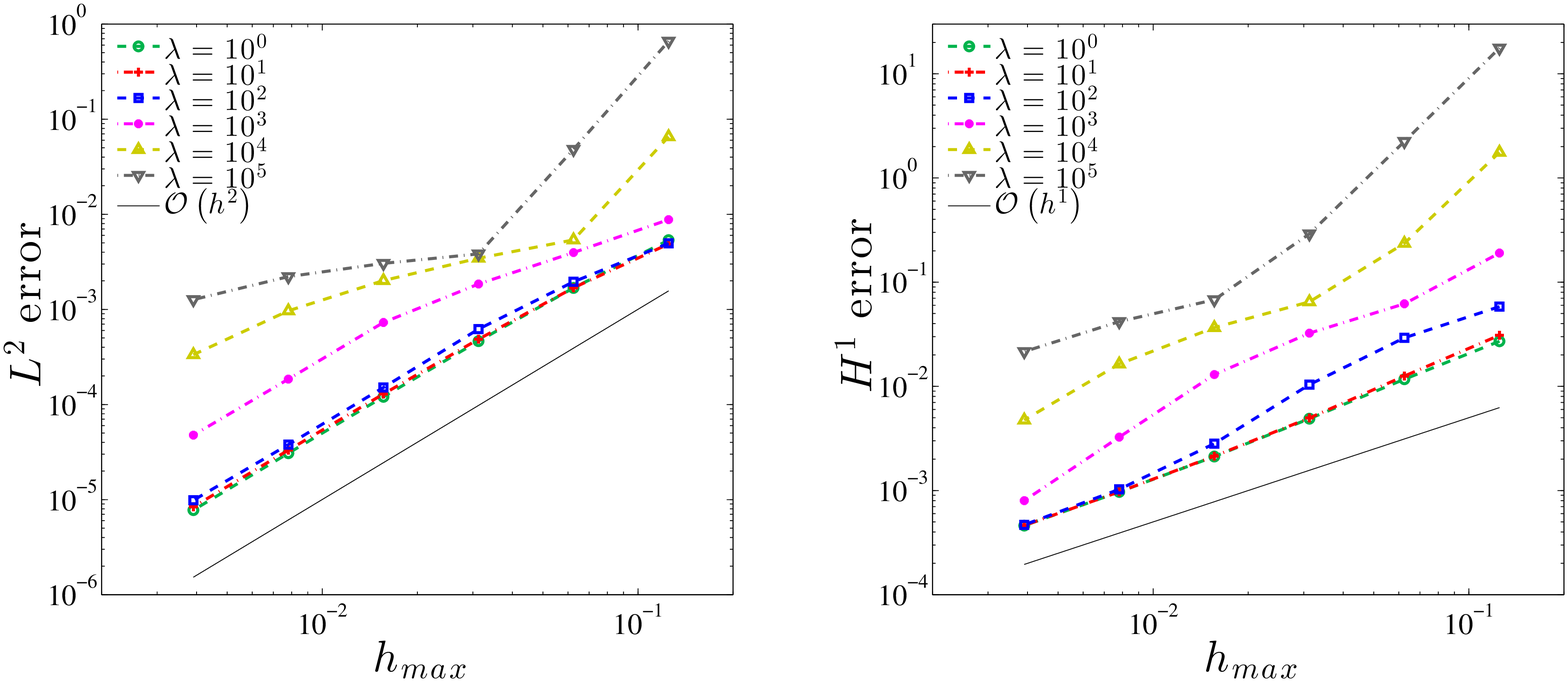}
\caption{Compressible elasticity, $V_h^1$: error versus the maximal element diameter $h_{max}$. Left: $L^2$-error, right: $H^1$-error.}
\label{linear_elasticity_graph_D1}
\end{center}
\end{figure}
\begin{figure}[h!]
\begin{center}
\includegraphics[scale=0.27]{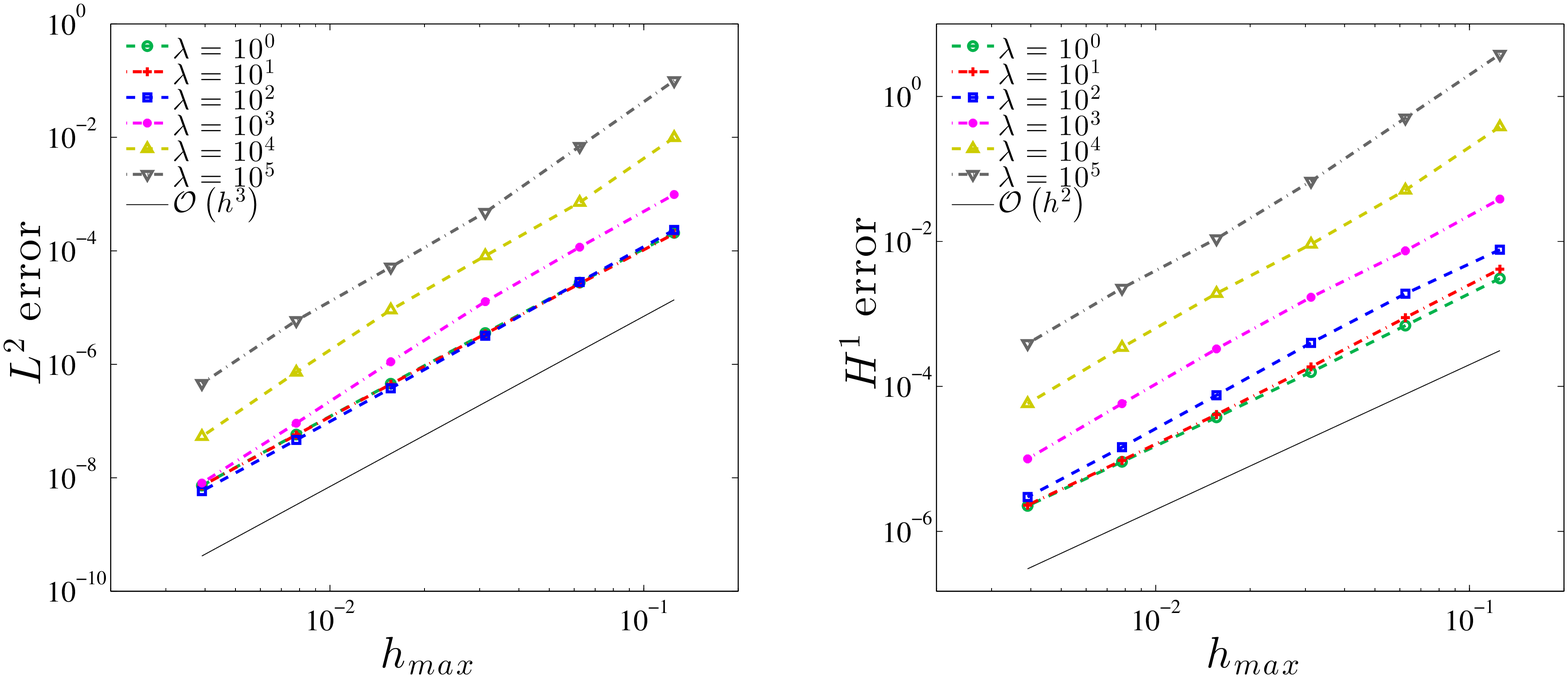}
\caption{Compressible elasticity, $V_h^2$: error versus the maximal element diameter $h_{max}$. Left: $L^2$-error, right: $H^1$-error.}
\label{linear_elasticity_graph_D2}
\end{center}
\end{figure}

The piecewise affine case (Figure \ref{linear_elasticity_graph_D1}) shows locking for $\lambda=10^5$. When $\lambda$ becomes too large, the convergence of the error does not hold if $h_{max}$ is not small enough. When the piecewise quadratic approximation is used (Figure \ref{linear_elasticity_graph_D2}), the problem with large values of $\lambda$ only changes the value of the error constant and has negligible effect on the observed rates of convergence. The numerical results show that for both cases the rate of convergence of the $H^1$-error corresponds to what has been shown theoretically. For the $L^2$-error, we observe a convergence of order ${\rm O}\left(h^{k+1}\right)$, which is a super convergence with $O(h^{1/2})$ compared to the theoretical result. In spite of numerous numerical experiments not reported here, we have not been able to find an example
exhibiting the suboptimal $L^2$-convergence of Proposition \ref{L2stabelast}.

% last sentence added correction review 1
\subsection{Incompressible elasticity}
The manufactured solution considered in this part defines the velocity and the pressure respectively such that
$$\boldsymbol{u}=\begin{pmatrix}\text{sin}(4\pi x)\text{cos}(4\pi y)\\-\text{cos}(4\pi x)\text{sin}(4\pi y)\end{pmatrix}~,~~~~~~~~~p=\pi\text{cos}(4\pi x)\text{cos}(4\pi y).$$
The nonsymmetric Nitsche's method without penalty given by equation (\ref{formulationstokes}) is used to compute approximations on a series of structured meshes. We take $\mu=1$, a range of values of $\gamma$ has been considered in the tests to study numerically the effect of the stabilization parameter on the computational error.
\begin{figure}[h!]
\begin{center}
\includegraphics[scale=0.27]{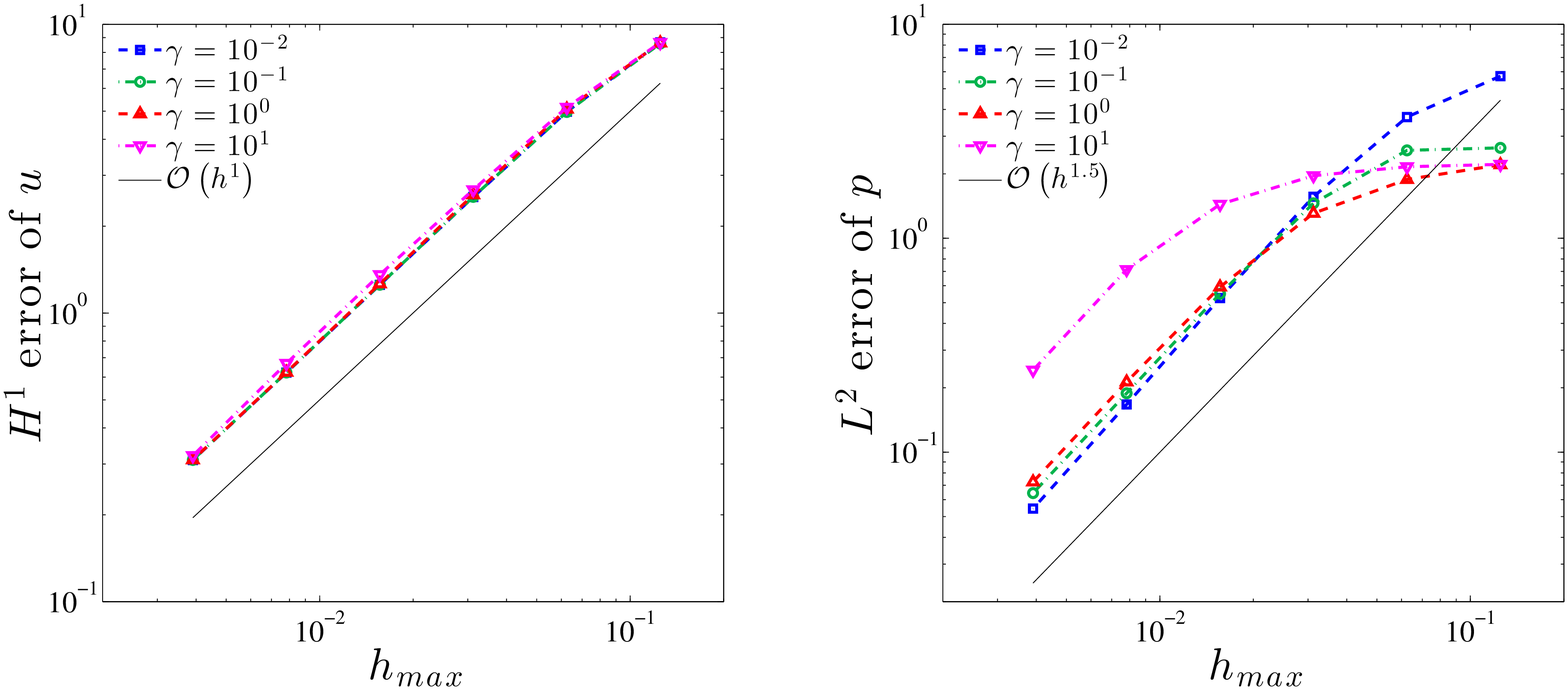}
\caption{Incompressible elasticity, $V_h^1\times Q_h^1$: errors for a range of value of $\gamma$ versus the maximal element diameter $h_{max}$. Left: $H^1$-error of the velocity, right : $L^2$-error of the pressure.}
\label{stokes_S1}
\end{center}
\end{figure}
Figure $\ref{stokes_S1}$ considers piecewise affine approximation. It shows that in this case the $H^1$-error of the velocity has an order of convergence ${\rm O}\left(h^{1}\right)$ for all the values of $\gamma$ tested. The convergence rates for the $L^2$-error of the pressure are close to ${\rm O}\left(h^{3/2}\right)$ for all the values of $\gamma$ considered and for $h_{max}$ small enough.

% added correction review 1
\subsection{Cook's membrane problem}
The Cook's membrane problem is a bending dominated test case. Figure \ref{cook_plan} represents the computational domain $\Omega$. On the face ($CD$) the Dirichlet boundary condition $\boldsymbol{u}=0$ is imposed. On the face ($AC$) the Neumann boundary condition $\boldsymbol{\sigma}(\boldsymbol{u})=(0,100)$ is imposed.

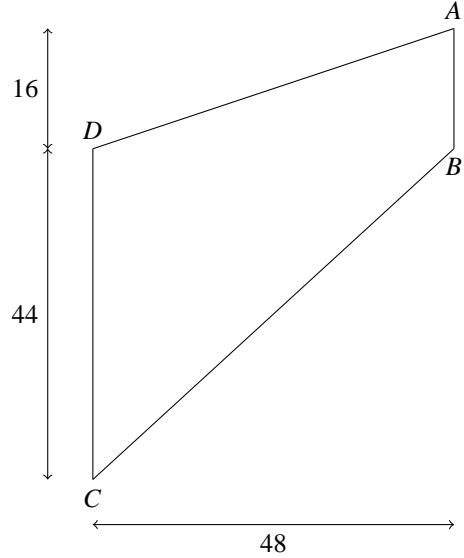
\begin{figure}[h!]
    \begin{center}
\begin{tikzpicture}[scale=0.1]

    \draw (0,0)  -- (0,44);
    \draw (0,44)  -- (48,60);
    \draw (48,60)  -- (48,44);
    \draw (48,44)  -- (0,0);
    
	\draw [<->] (0,-6)--(48,-6);
	\draw [<->] (-6,0)--(-6,44);
	\draw [<->] (-6,44)--(-6,60);

\draw (24,-6)node[below]{$48$};
\draw (-6,22)node[left]{$44$};
\draw (-6,52)node[left]{$16$};
\draw (0,0)node[below]{$C$};
\draw (0,44)node[above]{$D$};
\draw (48,60)node[above]{$A$};
\draw (48,44)node[below]{$B$};

\end{tikzpicture}
\end{center}
\caption{Cook's membrane, computational domain.}
    \label{cook_plan}
\end{figure}

In this part we compare the results given by the strong and weak imposition of the Dirichlet boundary condition. The weak imposition is implemented using the nonsymmetric Nitsche's method without penalty. We use first and second order polynomial approximation on unstructured meshes. For the first test $E=10^5$ and $\nu=0.3333$, we use compressible elasticity, note that ${\rm O}\left(\mu\right)={\rm O}\left(\lambda\right)$ ($\mu=37501$, $\lambda=74979$) . Figure \ref{deformed_mesh} shows the deformed mesh obtained.

\begin{figure}[h!]
\begin{center}
    \includegraphics[scale=0.4]{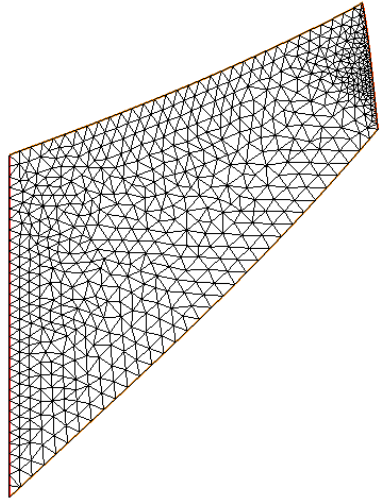}
\caption{Deformed mesh, with a magnification factor of 10.}
\label{deformed_mesh}
\end{center}
\end{figure}

We computte the vertical displacement of the point $A$ (top corner) versus the meshsize. Figure \ref{cvg_mu=lambda} shows the results for this case, by refining the mesh the approximation of the displacement of $A$ becomes more accurate. Both weak and strong imposition of the Dirichlet boundary are displayed. For first and second order approximation the weak imposition case converges faster than the strong imposition.

\begin{figure}[h!]
\begin{center}
    \includegraphics[scale=0.32]{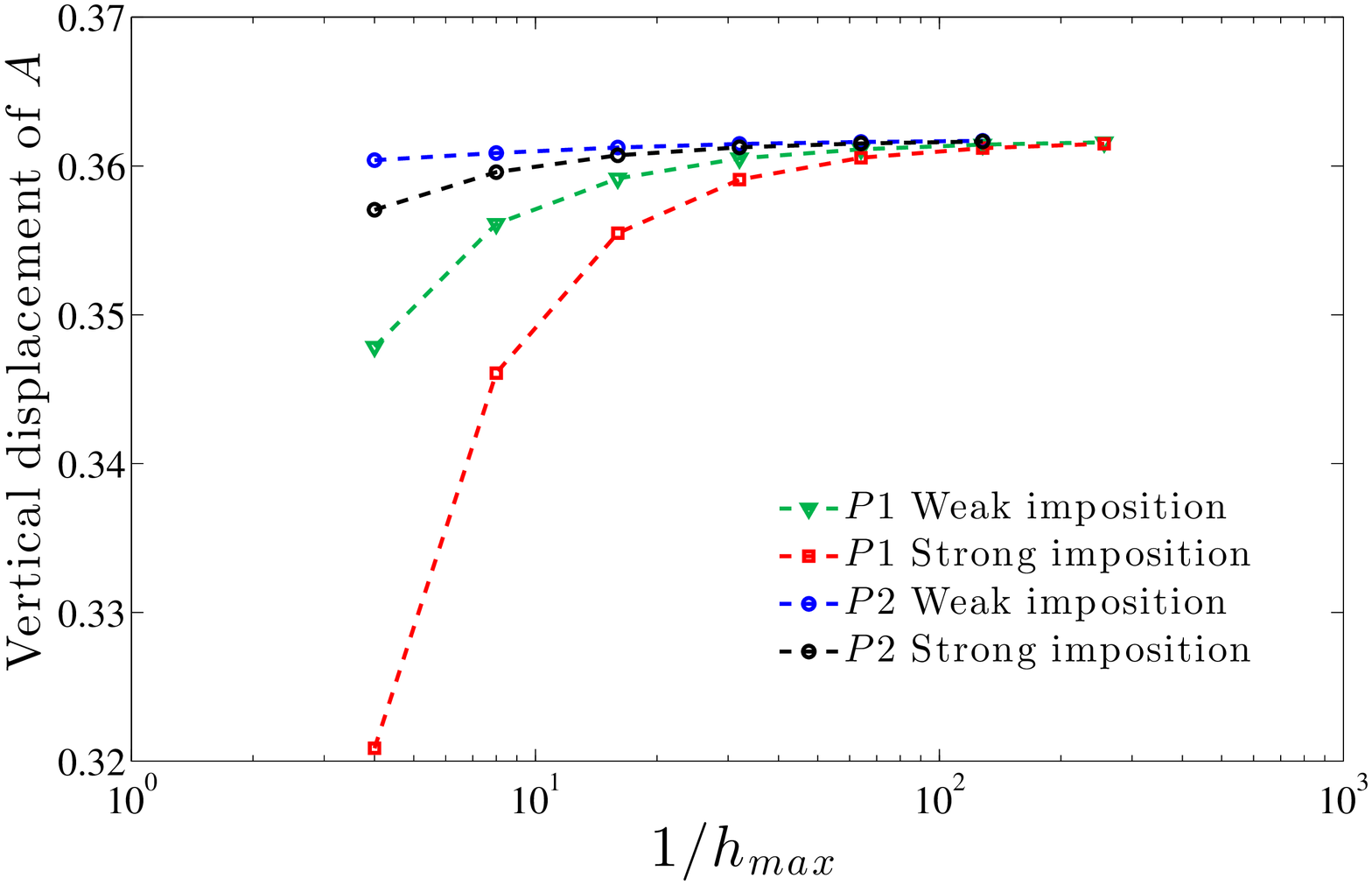}
\caption{Convergence of the vertical displacement, $E=10^5$ $\nu=0.3333$.}
\label{cvg_mu=lambda}
\end{center}
\end{figure}

For the second test we consider $E=250$ and $\nu=0.4999$, we expect to observe locking as ${\rm O}\left(\mu\right)\ll{\rm O}\left(\lambda\right)$ ($\mu=83$, $\lambda=416610$). Using compressible elasticity we perform the same tests as for the first study.

\begin{figure}[h!]
\begin{center}
    \includegraphics[scale=0.32]{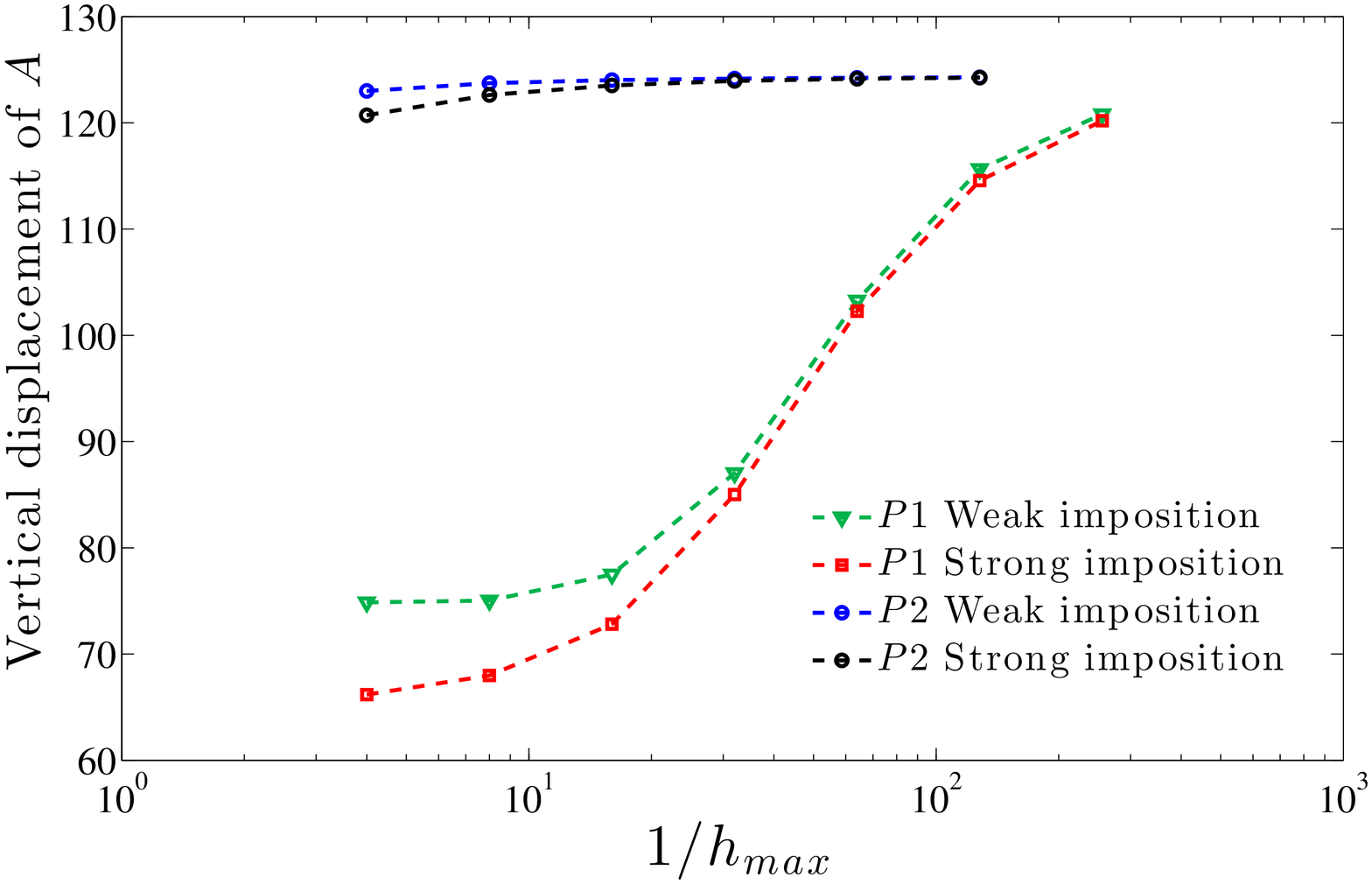}
\caption{Convergence of the vertical displacement, $E=250$ $\nu=0.4999$.}
\label{cvg_biglambda_elast}
\end{center}
\end{figure}

Figure \ref{cvg_biglambda_elast} represents the vertical displacement of the point $A$ (top corner) versus the meshsize. We observe locking for both methods for first order approximation. The second order approximation converges without locking even for the coarse meshes. Similarly as the previous case the convergence is faster for the weak imposition. In view of the observed locking, we use the nearly incompressible problem to perform the same computations. The nearly incompressible problem, is obtained considering (\ref{stokes}) and replacing $\nabla \cdot \boldsymbol{u}=0$ by $\nabla \cdot \boldsymbol{u}=p/\lambda$.

\begin{figure}[h!]
\begin{center}
    \includegraphics[scale=0.32]{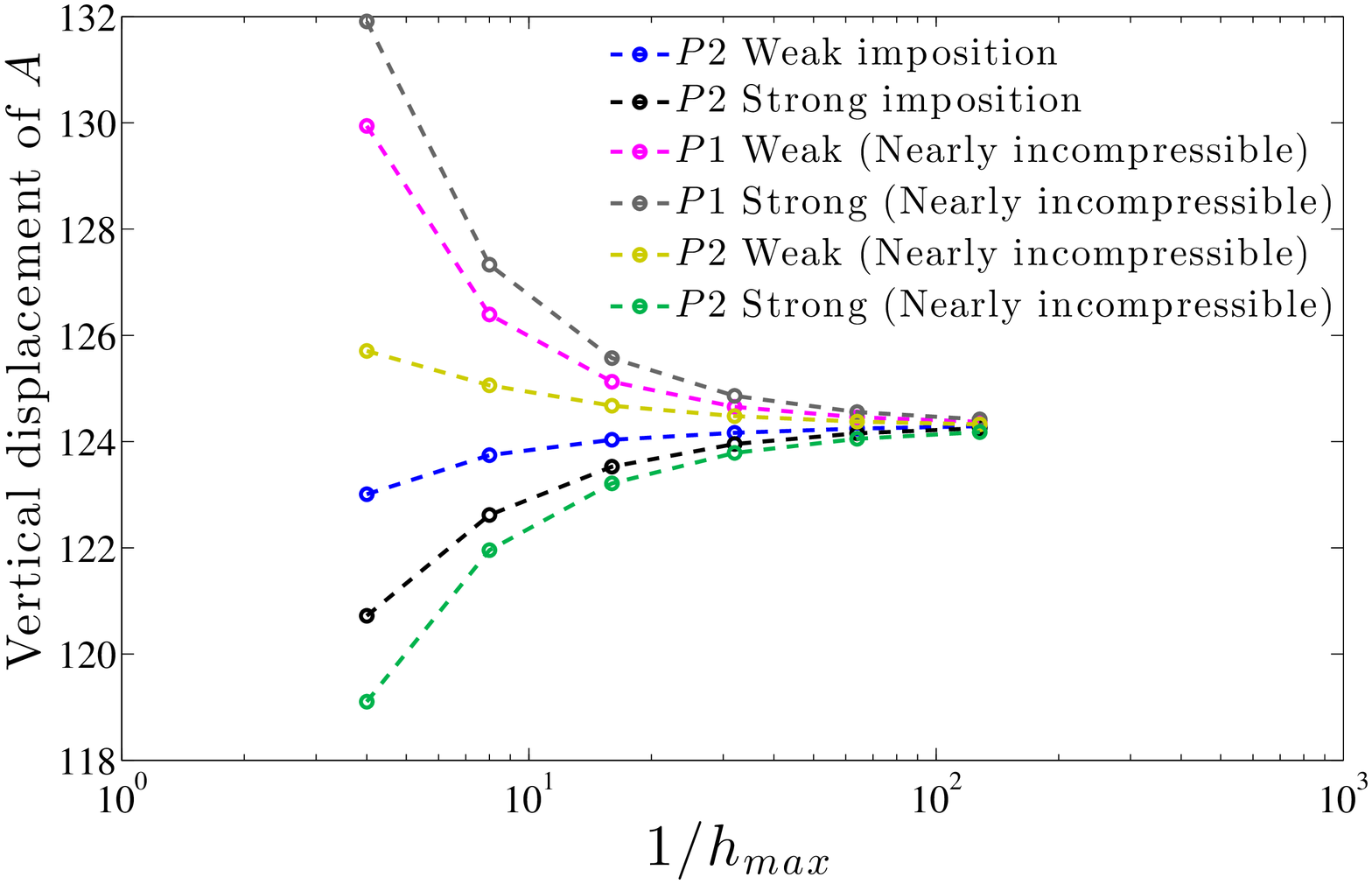}
\caption{Convergence of the vertical displacement, $E=250$ $\nu=0.4999$.}
\label{cvg_biglambda_all}
\end{center}
\end{figure}

Figure \ref{cvg_biglambda_all} displays the nearly incompressible elasticity for first and second order approximations for the weak and strong imposition but also the compressible elasticity with second order approximation. It shows that for nearly incompressible elasticity there is no locking for the method using first order polynomial approximation however for second order approximation the compressible elasticity converges faster than the nearly incompressible elasticity. Once again the weak imposition case converges faster than the strong imposition.

\section*{Appendix}
\subsection*{Proof of Lemma \ref{prop_patch}}
\begin{itemize}
\item \eqref{stdapprox}\\
There exists $x_0\in F_j$ such that $(\boldsymbol{u}_h-\overline{\boldsymbol{u}}_h^j)(x_0)=0$, then for $x\in F_j$
$$(\boldsymbol{u}_h-\overline{\boldsymbol{u}}_h^j)(x)=\int_{x_0}^x \nabla \boldsymbol{u}_h\cdot \boldsymbol{\tau}~\text{d}s,$$
using the Cauchy-Schwarz inequality it follows that
$$\left\|\boldsymbol{u}_h-\overline{\boldsymbol{u}}_h^j\right\|_{F_j}
\lesssim\left(\int_{F_j}\left(\int_{F_j} \left|\nabla \boldsymbol{u}_h\cdot \boldsymbol{\tau}\right|~\text{d}s\right)^2\text{d}s\right)^\frac12\lesssim h^\frac12\left\|\nabla \boldsymbol{u}_h\cdot \boldsymbol{\tau}\right\|_{F_j}\left(\int_{F_j}\text{d}s\right)^\frac12.$$
\item \eqref{inequality4}\\
The triangle inequality gives
$$\left\|h^{-\frac12}\boldsymbol{u}_h\right\|_{F_j}^2\leq \left\|h^{-\frac12}\left(\boldsymbol{u}_h-\bar{\boldsymbol{u}}_h\right)\right\|_{F_j}^2+\left\|h^{-\frac12}\bar{\boldsymbol{u}}_h\right\|_{F_j}^2,$$
considering the inequality (\ref{stdapprox}) and the trace inequality we can write
$$\left\|\left(\boldsymbol{u}_h-\bar{\boldsymbol{u}}_h\right)\right\|_{F_j}\lesssim h^{\frac12}\left\|\nabla \boldsymbol{u}_h\right\|_{P_j}.$$
\item \eqref{inequality7}\\
Applying the Poincar\'e inequality, on each patch $P_j$ the inequality follows.
\item \eqref{keyineq1}, \eqref{keyineq2}\\
Using the properties of $\boldsymbol{v}_j$ \eqref{defvgamma}, \eqref{prop_vj} and the Lemma 4.1 of \cite{Burman_2012_b}.
\end{itemize}

\subsection*{Proof of Lemma \ref{inequality6}}

\begin{proof}
In the rotated frame $(\xi,\eta)$, applying the definition of the $P_0$-projection, we can write the bilinear form as
\begin{eqnarray*}
\left\langle\lambda\hat{\nabla} \cdot \hat{ \boldsymbol{v}}_j, \hat{ \boldsymbol{u}}_h \cdot  \hat{\boldsymbol{n}}\right\rangle_{\hat{F}_j}
&=&\lambda\int_{\hat{F}_j}  \left(\alpha_1\frac{\partial \hat{v}_1}{\partial \xi}+\alpha_2\frac{\partial \hat{v}_2}{\partial \eta}\right)\hat{u}_2~\text{d}\hat{s}\\
&=&\lambda\int_{\hat{F}_j} \alpha_1\frac{\partial \hat{v}_1}{\partial \xi}\hat{u}_2+ \alpha_2\frac{1}{h}\left(P_0\hat{u}_2\right)^2~\text{d}\hat{s}+\lambda\int_{\hat{F}_j}\alpha_2\frac{\partial \hat{v}_2}{\partial \eta}\left(\hat{u}_2-P_0\hat{u}_2\right)~\text{d}\hat{s}.
\end{eqnarray*}
We observe that $\frac{\partial \hat{v}_1}{\partial \xi}=\hat{\nabla} \cdot(\hat{v}_1,0)^{\rm T}.$
Using the trace inequality, the inverse inequality and \eqref{keyineq1} \eqref{keyineq2}, we can show
$$\left\|\frac{\partial \hat{v}_1}{\partial \xi}\right\|_{\hat{F}_j}\lesssim h^{-1}\left\|\overline{\boldsymbol{u}}_h^j\cdot \boldsymbol{\tau}\right\|_{F_j}.$$
Note that $\int_{\hat{F}_j}\frac{\partial \hat{v}_1}{\partial \xi}~\text{d}\hat{s}=0$, using these properties and the inequality (\ref{stdapprox}), it follows that
\begin{eqnarray*}
\lambda\int_{\hat{F}_j}\alpha_1\frac{\partial \hat{v}_1}{\partial \xi}\hat{u}_2~\text{d}\hat{s}
&=&\lambda\int_{\hat{F}_j}\alpha_1\frac{\partial \hat{v}_1}{\partial \xi}\left(\hat{u}_2-P_0\hat{u}_2\right)~\text{d}\hat{s}\\
&\geq&-C\alpha_1 h^{-1}\left\|\lambda^\frac12 \overline{\boldsymbol{u}}_h^j\cdot\boldsymbol{\tau}\right\|_{F_j}\left\|\lambda^\frac12 \left(\boldsymbol{u}_h-\overline{\boldsymbol{u}}_h^j\right)\cdot \boldsymbol{n}\right\|_{F_j}\\
&\geq&-\frac{C\alpha_1^2}{4\epsilon}\left\|\frac{\lambda^\frac12}{h^\frac12}\overline{\boldsymbol{u}}_h^j\cdot\boldsymbol{\tau}\right\|_{F_j}^2-\epsilon\left\|\lambda^\frac12\nabla \boldsymbol{u}_h\right\|_{P_j}^2.
\end{eqnarray*}
Using (\ref{defvgamma}) we can obtain similarly
\begin{eqnarray*}
\lambda\int_{\hat{F}_j}\alpha_2\frac{\partial \hat{v}_2}{\partial \eta}\left(\hat{u}_2-P_0\hat{u}_2\right)~\text{d}\hat{s}&\geq&-\frac{C\alpha_2^2}{4\epsilon}\left\|\frac{\lambda^\frac12}{h^\frac12}\overline{\boldsymbol{u}}_h^j\cdot \boldsymbol{n}\right\|_{F_j}^2-\epsilon\left\|\lambda^\frac12\nabla \boldsymbol{u}_h\right\|_{P_j}^2,\\
\lambda\int_{\hat{F}_j}\alpha_2\frac{1}{h}\left(P_0\hat{u}_2\right)^2~\text{d}\hat{s}&=&\alpha_2\left\|\frac{\lambda^\frac12}{h^\frac12}\overline{\boldsymbol{u}}_h^j\cdot  \boldsymbol{n}\right\|_{F_j}^2.
\end{eqnarray*}
\end{proof}\\
\subsection*{Proof of Lemma \ref{inequality5}}
\begin{proof}
In the rotated frame $(\xi,\eta)$, applying the definition of the $P_0$-projection, we can write the bilinear form similarly as in the previous proof
\begin{eqnarray*}
\left\langle 2\mu\hat{\boldsymbol{\varepsilon}}\left(\hat{\boldsymbol{v}}_j\right) \cdot \hat{\boldsymbol{n}},\hat{\boldsymbol{u}}_h \right\rangle_{\hat{F}_j}
&=&\mu\int_{\hat{F}_j}\alpha_1\frac{\partial \hat{v}_1}{\partial\eta}\hat{u}_1+\alpha_2\frac{\partial \hat{v}_2}{\partial \xi}\hat{u}_1+2\alpha_2\frac{\partial \hat{v}_2}{\partial\eta}\hat{u}_2~\text{d}\hat{s}\\
&=&\mu\int_{\hat{F}_j}\alpha_1\frac{1}{h}\left(P_0\hat{u}_1\right)^2+\alpha_2\frac{\partial \hat{v}_2}{\partial \xi}\hat{u}_1+\alpha_2\frac{2}{h}\left(P_0\hat{u}_2\right)^2~\text{d}\hat{s}\\
&&+\mu\int_{\hat{F}_j}\alpha_1\frac{\partial \hat{v}_1}{\partial\eta}\left( \hat{u}_1-P_0\hat{u}_1\right)~\text{d}\hat{s}+2\mu\int_{\hat{F}_j}\alpha_2\frac{\partial \hat{v}_2}{\partial\eta}\left( \hat{u}_2-P_0\hat{u}_2\right)~\text{d}\hat{s}.
\end{eqnarray*}
Term by term we obtain
\begin{eqnarray*}
 \mu\int_{\hat{F}_j}\alpha_1\frac{1}{h}\left(P_0\hat{u}_1\right)^2~\text{d}\hat{s}&=&\alpha_1\left\|\frac{\mu^\frac12}{h^\frac12}\overline{\boldsymbol{u}}_h^j\cdot \boldsymbol{\tau}\right\|_{F_j}^2,\\
 \mu\int_{\hat{F}_j}\alpha_2\frac{2}{h}\left(P_0\hat{u}_2\right)^2~\text{d}\hat{s}&=&2\alpha_2\left\|\frac{\mu^\frac12}{h^\frac12}\overline{\boldsymbol{u}}_h^j\cdot \boldsymbol{n}\right\|_{F_j}^2,\\
\mu\int_{\hat{F}_j}\alpha_1\frac{\partial \hat{v}_1}{\partial \eta}\left(\hat{u}_1-P_0\hat{u}_1\right)~\text{d}\hat{s}&\geq&-\frac{C\alpha_1^2}{4\epsilon}\left\|\frac{\mu^\frac12}{h^\frac12}\overline{\boldsymbol{u}}_h^j\cdot \boldsymbol{\tau}\right\|_{F_j}^2-\epsilon\left\|\mu^\frac12\nabla \boldsymbol{u}_h\right\|_{P_j}^2,\\
2\mu\int_{\hat{F}_j}\alpha_2\frac{\partial \hat{v}_2}{\partial \eta}\left(\hat{u}_2-P_0\hat{u}_2\right)~\text{d}\hat{s}&\geq&-\frac{C\alpha_2^2}{\epsilon}\left\|\frac{\mu^\frac12}{h^\frac12}\overline{\boldsymbol{u}}_h^j\cdot \boldsymbol{n}\right\|_{F_j}^2-\epsilon\left\|\mu^\frac12\nabla \boldsymbol{u}_h\right\|_{P_j}^2.
\end{eqnarray*}
We observe that $\frac{\partial \hat{v}_2}{\partial \xi}=\hat{\nabla} (0,\hat{v}_2)^{\rm T}\cdot \boldsymbol{\tau}.$
Using the trace inequality, the inverse inequality and \eqref{keyineq1} \eqref{keyineq2}, we can show
$$\left\|\frac{\partial \hat{v}_2}{\partial \xi}\right\|_{\hat{F}_j}\lesssim h^{-1}\left\|\overline{\boldsymbol{u}}_h^j\cdot \boldsymbol{n}\right\|_{F_j}.$$
Note that since $\int_{\hat{F}_j}\frac{\partial \hat{v}_2}{\partial \xi}~\text{d}\hat{s}=0$, we obtain
\begin{equation*}
\mu\int_{\hat{F}_j}\alpha_2\frac{\partial \hat{v}_2}{\partial \xi}\hat{u}_1~\text{d}\hat{s}
=\mu\int_{\hat{F}_j}\alpha_2\frac{\partial \hat{v}_2}{\partial \xi}\left(\hat{u}_1-P_0\hat{u}_1\right)~\text{d}\hat{s}\geq-\frac{C\alpha_2^2}{4\epsilon}\left\|\frac{\mu^\frac12}{h^\frac12}\overline{\boldsymbol{u}}_h^j\cdot\boldsymbol{n}\right\|_{F_j}^2-\epsilon\left\|\mu^\frac12\nabla \boldsymbol{u}_h\right\|_{P_j}^2.
\end{equation*}
\end{proof}

\section*{Acknowledgement}

This work received funding from EPSRC (award number EP/J002313/1) which is gratefully acknowledged. We also thank an anonymous reviewer for suggesting the study of the Cook's membrane.

\bibliographystyle{IMANUM-BIB}
\bibliography{Bibliography}

\begin{thebibliography}{}

\bibitem[Brenner \& Scott(2008)Brenner \& Scott]{Brenner_2008_a}
{\sc Brenner, S.~C. \& Scott, L.~R.} (2008)
\newblock {\em The mathematical theory of finite element methods\/}. Texts in
  Applied Mathematics,  vol.~15, third edn.
\newblock New York: Springer, pp. xviii+397.

\bibitem[Burman(2012)Burman]{Burman_2012_b}
{\sc Burman, E.} (2012)
\newblock A penalty free nonsymmetric {N}itsche-type method for the weak
  imposition of boundary conditions.
\newblock {\em SIAM J. Numer. Anal.}, {\bf 50}, 1959--1981.

\bibitem[Burman \& Fern{\'a}ndez(2014)Burman \& Fern{\'a}ndez]{Burman_2014_c}
{\sc Burman, E. \& Fern{\'a}ndez, M.~A.} (2014)
\newblock {Explicit strategies for incompressible fluid-structure interaction
  problems: Nitsche type mortaring versus Robin--Robin coupling}.
\newblock {\em International Journal for Numerical Methods in Engineering\/},
  {\bf 97}, 739--758.

\bibitem[Burman \& Stamm(2010)Burman \& Stamm]{Burman_2010_c}
{\sc Burman, E. \& Stamm, B.} (2010)
\newblock Bubble stabilized discontinuous {G}alerkin method for parabolic and
  elliptic problems.
\newblock {\em Numerische Mathematik\/}, {\bf 116}, 213--241.

\bibitem[Freund \& Stenberg(1995)Freund \& Stenberg]{Freund_1995_a}
{\sc Freund, J. \& Stenberg, R.} (1995)
\newblock On weakly imposed boundary conditions for second order problems.
\newblock {\em Proceedings of the International Conference on Finite Element in
  Fluids - New trends and applications\/}.

\bibitem[Girault \& Raviart(1986)Girault \& Raviart]{Girault_1986_a}
{\sc Girault, V. \& Raviart, P.} (1986)
\newblock {\em Finite element methods for Navier-Stokes equations: theory and
  algorithms\/}.
\newblock Springer series in computational mathematics.
\newblock Springer-Verlag.

\bibitem[Girault \& Rivi{\`e}re(2009)Girault \& Rivi{\`e}re]{Girault_2009_a}
{\sc Girault, V. \& Rivi{\`e}re, B.} (2009)
\newblock {DG} {A}pproximation of {C}oupled {N}avier--{S}tokes and {D}arcy
  {E}quations by {B}eaver--{J}oseph--{S}affman {I}nterface {C}ondition.
\newblock {\em SIAM Journal on Numerical Analysis\/}, {\bf 47}, 2052--2089.

\bibitem[Hecht(2012)Hecht]{Hecht_2012_a}
{\sc Hecht, F.} (2012)
\newblock New development in freefem++.
\newblock {\em J. Numer. Math.}, {\bf 20}, 251--265.

\bibitem[Hughes {\em et~al.}(2000)Hughes, Engel, Mazzei, \&
  Larson]{Hughes_2000_a}
{\sc Hughes, T. J.~R., Engel, G., Mazzei, L. \& Larson, M.~G.} (2000)
\newblock A comparison of discontinuous and continuous {G}alerkin methods based
  on error estimates, conservation, robustness and efficiency.
\newblock {\em Discontinuous {G}alerkin methods ({N}ewport, {RI}, 1999)\/}.
  Lect. Notes Comput. Sci. Eng., vol. 11.
\newblock Berlin: Springer, pp. 135--146.

\bibitem[Larson \& Niklasson(2004)Larson \& Niklasson]{Larson_2004_a}
{\sc Larson, M. \& Niklasson, A.} (2004)
\newblock {A}nalysis of a nonsymmetric discontinuous {G}alerkin method for
  elliptic problems: stability and energy error estimates.
\newblock {\em SIAM Journal on Numerical Analysis\/}, {\bf 42}, 252--264.

\bibitem[Lax(2002)Lax]{Lax_2002_a}
{\sc Lax, P.~D.} (2002)
\newblock {\em Functional analysis\/}.
\newblock Pure and Applied Mathematics (New York).
\newblock Wiley-Interscience [John Wiley \& Sons], New York, pp. xx+580.

\bibitem[Nitsche(1971)Nitsche]{Nitsche_1971_a}
{\sc Nitsche, J.} (1971)
\newblock {\"U}ber ein variationsprinzip zur l{\"o}sung von
  {D}irichlet-problemen bei verwendung von teilr{\"a}umen, die keinen
  randbedingungen unterworfen sind.
\newblock {\em Abhandlungen aus dem Mathematischen Seminar der Universit{\"a}t
  Hamburg\/}, {\bf 36}, 9--15.

\bibitem[Oden {\em et~al.}(1998)Oden, Babu{\^s}ka, \& Baumann]{Oden_1998_a}
{\sc Oden, J.~T., Babu{\^s}ka, I. \& Baumann, C.~E.} (1998)
\newblock A discontinuous hp finite element method for diffusion problems.
\newblock {\em Journal of Computational Physics\/}, {\bf 146}, 491 -- 519.

\bibitem[Scott \& Zhang(1990)Scott \& Zhang]{Scott_1990_a}
{\sc Scott, L.~R. \& Zhang, S.} (1990)
\newblock Finite element interpolation of nonsmooth functions satisfying
  boundary conditions.
\newblock {\em Math. Comp.}, {\bf 54}, 483--493.

\end{thebibliography}

\end{document}